\documentclass[10pt,a4paper]{amsart}

\usepackage{amsmath,amsfonts,amsthm,amssymb,amscd,appendix,verbatim}
\usepackage{xpatch}
\usepackage[utf8]{inputenc}
\usepackage{setspace,color}
\usepackage{array}
\usepackage{enumitem}
\usepackage{cite}
\usepackage{mathtools}

\usepackage[
    a4paper,
    margin=0.7in
]{geometry}

\usepackage[
    colorlinks=true,
    linkcolor=blue,
    citecolor=blue,
    urlcolor=blue
]{hyperref}

\newtheorem{theorem}{Theorem}[section]
\newtheorem{corollary}[theorem]{Corollary}

\theoremstyle{definition}

\newtheorem{remark}[theorem]{Remark}

\newcommand{\caixa}{\hglue15.1cm$\square$\vspace{5mm}}

\newcommand{\bR}{\mathbb{R}}

\newcommand{\cF}{\mathcal{F}}

\newcommand{\cS}{\mathcal{S}}

\newcommand{\cX}{\mathcal{X}}

\newcommand{\rP}{\mathbb{P}}

\DeclareMathOperator*{\esssup}{\mathrm{ess\,sup}}

\makeatletter
\xpatchcmd{\@thm}{\fontseries\mddefault\upshape}{}{}{}
\makeatother

\title[Fractional Navier--Stokes equations with damping]
{On solutions for damped fractional Navier--Stokes equations in a critical  Lei--Lin--Gevrey space}
\author[]{Bruno S. Donato$^\ddagger$}
\thanks{$^\ddagger$Department of Mathematics and Statistics, University of Helsinki,
Helsinki 00560, Finland, e-mail: bruno.santannadonatode\-moura@helsinki.fi}

\author[Fractional Navier--Stokes equations with damping]{Wilberclay G. Melo$^*$}
\thanks{$^*$ Departamento de Matemática, Universidade Federal de Sergipe,
São Cristóvão, SE 49100-000, Brazil, e-mail: wilberclay@gmail.com}

\author[]{Thyago S. R. Santos$^\dagger$}
 \thanks{$^\dagger$Departamento de Matemática, Instituto de Matemática, Estatística e Computação Científica,
Universidade Estadual de Campinas, Campinas, SP 13083-859, Brazil, e-mail: thyagosr@unicamp.br. This author is partially supported by São Paulo Research Foundation (FAPESP)  grant 2024/15587-1}

\date{}

\begin{document}

\maketitle

\begin{abstract}
\noindent We study the fractional Navier--Stokes equations with cubic damping in the specific Lei--Lin--Gevrey space $\cX^0_{a,\sigma}(\bR^3)$ (with $a\geq0$ and $\sigma\geq1$). We establish local and small-data global well-posedness, quantitative blow-up criteria, Gevrey loss-of-radius estimates, and perturbative stability in the critical endpoint case $\gamma=\frac12$ of the usual Navier-Stokes system.
\end{abstract}
\vspace{0.5cm}

\textbf{Key words:} {\it Fractional  Navier-Stokes equations; Lei-Lin-Gevrey spaces; local and global  solutions; blow-up criteria for local solutions; stability for global solutions.}

\textbf{AMS Mathematics Subject Classification: 35A01, 35B44, 35A02, 35Q30, 35Q35.}

\section{Introduction}

The purpose of this work is to study existence, blow-up criteria, and stability of mild solutions for the Navier--Stokes equations with fractional dissipation and nonlinear damping in the specific Lei--Lin--Gevrey space $\cX^0_{a,\sigma}(\bR^3)$ (with $a\geq0$ and $\sigma\geq1$). Our approach relies on standard tools from Fourier analysis (see \cite{HormanderALPDO1} and the references therein), combined with estimates adapted to the convolution structure of the nonlinear terms. More precisely, we consider the following system:
\begin{equation}\label{NS}	\tag{NS}
\left\{
\begin{array}{l}
u_t
\;\!+\,
(-\Delta)^{\gamma}\,u
\,+\,
u \cdot \nabla u
\,+\, \alpha|u|^2u
\,+\,
\nabla \;\!p \:\!
\;=\;
0, \quad x\in \mathbb{R}^3, t>0;\\
\mbox{div}\:u  \;=\; 0, \quad x\in \mathbb{R}^3, t>0;\\
u(x,0) \,=\, u_0(x), \quad x\in \mathbb{R}^3,
\end{array}
\right.
\end{equation}
where $u(x,t)=(u_1(x,t),u_2(x,t),u_3(x,t))\in\mathbb{R}^3$ denotes the velocity field and $p=p(x,t)$ is the scalar pressure. The parameter $\alpha\geq0$ is the Forchheimer damping coefficient, whereas $\gamma\geq\frac12$ determines the strength of the fractional dissipation. Throughout the paper, the initial datum $u_0$ is assumed to be divergence free. As usual, the fractional Laplacian is defined through the Fourier transform by
$$
\mathcal{F}[(-\Delta)^{\gamma} f](\xi)
=
|\xi|^{2\gamma}{\widehat f(\xi)},
\qquad
\xi\in\mathbb{R}^3.
$$
The lower endpoint $\gamma=\frac12$ deserves particular attention. In this case, $(-\Delta)^{1/2}$ is an operator of order one, while the transport term $u\cdot\nabla u$ also contains one spatial derivative. Thus, the linear dissipation and the derivative carried by the quadratic nonlinearity have exactly the same differential order. This balance makes the case $\gamma=\frac12$ a borderline regime and removes the derivative advantage available when $\gamma>\frac12$. This is one of the main analytical difficulties of the problem; see, for instance, \cite{Bcritico} and the references therein.

From the physical point of view, the system \eqref{NS} describes the evolution of the velocity of an incompressible fluid subject to nonlinear transport, viscous or fractional dissipation, pressure forces, and an additional nonlinear drag. The term $u\cdot\nabla u$ represents the transport of momentum by the fluid itself and is the main source of nonlinear interaction. The pressure does not have its own evolution equation; instead, it adjusts instantaneously so that the incompressibility condition $\operatorname{div}u=0$ is preserved. In this sense, $p$ acts as a Lagrange multiplier associated with the divergence-free constraint. The term
$$
\alpha |u|^2u,\qquad \alpha\geq0,
$$
is a Forchheimer-type damping term. Its direction is opposite to the motion and its magnitude increases with the velocity. Formally, when the equation is tested against $u$, this term contributes the nonnegative quantity
$$
\alpha\int_{\mathbb R^3}|u|^4\,dx
$$
to the energy dissipation. It therefore provides an additional mechanism for suppressing large amplitudes and for counteracting the concentration produced by the nonlinear transport. From the mathematical point of view, however, the presence of this cubic term also introduces an additional nonlinear interaction that must be controlled in the solution space.

An important feature of \eqref{NS} is the fractional dissipation operator $(-\Delta)^\gamma$. When $\gamma=1$, one recovers the usual Laplacian of the classical Navier--Stokes equations (see \cite{CaiJiu2008} and references therein). For $\frac12\leq\gamma<1$, the dissipation is weaker than the classical viscous dissipation, and therefore the competition between nonlinear transport and high-frequency damping becomes more delicate. The value $\gamma=\frac12$ is especially important because $(-\Delta)^{1/2}$ has order one, exactly the same differential order as the derivative contained in $u\cdot\nabla u$. Thus, at the level of derivatives, the dissipation has only the strength required to compensate the derivative appearing in the quadratic nonlinearity, with no additional smoothing available. By contrast, when $\gamma>\frac12$, the operator $(-\Delta)^\gamma$ has order strictly larger than one, and the equation lies in a subcritical dissipative regime from the point of view of the estimates used below.

This distinction is also reflected by the natural scaling of the  fractional Navier--Stokes equations. Ignoring, for the moment, the damping term, if $u$ is a solution, the corresponding scaling is formally given by
$$
u_\lambda(x,t)
=
\lambda^{2\gamma-1}
u(\lambda x,\lambda^{2\gamma}t),
\qquad \lambda>0.
$$
For the complete damped system, the same transformation also changes the damping coefficient. Indeed, the time derivative, the fractional dissipation, the transport term, and the pressure gradient scale with the factor $\lambda^{4\gamma-1}$, whereas
$$
[|u_\lambda|^2u_\lambda](x,t)
=
\lambda^{6\gamma-3}
[|u|^2u](\lambda x,\lambda^{2\gamma}t).
$$
Consequently, the transformed damping coefficient is
$$
\alpha_\lambda
=
\alpha\lambda^{2-2\gamma}.
$$
Hence, for fixed $\alpha>0$, the complete system is invariant under this scaling only when $\gamma=1$. When $\alpha=0$, on the other hand, the displayed transformation is a symmetry of the equation for every $\gamma\geq\frac12$.

For the Lei--Lin space
$$
\mathcal X^s(\mathbb R^3)
=
\left\{
f\in \cS'(\bR^3) : \hat{f}\in L^1_{loc}(\mathbb{R}^3) \hbox{  and  }
\int_{\mathbb R^3}
|\xi|^s|\widehat f(\xi)|\,d\xi<\infty
\right\},
$$
a direct computation gives
$$
\|u_\lambda(0)\|_{\mathcal X^s}
=
\lambda^{2\gamma-1+s}
\|u_0\|_{\mathcal X^s}.
$$
Consequently, the scaling-critical index is
$$
s_c=1-2\gamma.
$$
For a fixed Gevrey parameter $a>0$, the scaling behaves instead as
$$
\|u_\lambda(0)\|_{\mathcal X^s_{a,\sigma}}
=
\lambda^{2\gamma+s-1}
\|u_0\|_{\mathcal X^s_{a\lambda^{1/\sigma},\sigma}}.
$$
Therefore, a Gevrey norm with fixed radius $a>0$ is not itself invariant under the scaling. Throughout this work, the terminology ``critical'' refers to the underlying polynomial Lei--Lin index. In particular, $\mathcal X^0(\mathbb{R}^3)$ is critical when $\gamma=\frac12$, while $\mathcal X^0_{a,\sigma}(\mathbb{R}^3)$, with $a>0$, should be viewed as its Gevrey refinement with fixed radius.

As observed above, when $\gamma=\frac12$, one has $s_c=0$, and hence $\mathcal X^0(\mathbb R^3)$ is invariant under the natural scaling of the undamped equation. This gives a precise reason for the importance of $\mathcal X^0(\mathbb R^3)$ in the critical problem: rescaling the solution does not make the initial datum smaller in this norm. Thus, the critical theory can not rely on a gain produced simply by scaling. In this sense, $s=s_c$ marks the natural threshold between the subcritical regime $s>s_c$, where the norm becomes smaller under concentration, and the supercritical regime $s<s_c$, where the norm becomes larger.

The specific Lei--Lin--Gevrey space $\mathcal X^s_{a,\sigma}(\mathbb R^3)$ refines this Lei--Lin framework by introducing an exponential weight in frequency. More precisely, we consider
$$
\|f\|_{\mathcal X^s_{a,\sigma}}
=
\int_{\mathbb R^3}
|\xi|^s
e^{a|\xi|^{1/\sigma}}
|\widehat f(\xi)|\,d\xi,
$$
where $a>0$ and $\sigma\geq1$. The polynomial factor $|\xi|^s$ measures the usual Fourier regularity, whereas the exponential factor measures additional decay of the Fourier transform at high frequencies and hence Gevrey regularity of the solution. There are two reasons why these spaces are well suited to \eqref{NS}.  We refer to \cite{patricia,Bnovo,Bcritico,BNS,Bsubcritico,Nata,wilberclay26,MR1026858,LeiLin2011,artigowilberthyagomanasses,Nati,thyago7,coriolis} for related developments concerning Lei--Lin and Lei--Lin--Gevrey spaces.

Finally, it is convenient to formulate the problem in terms of mild solutions. Let $\{e^{-t(-\Delta)^\gamma}\}_{t \geq0}$ denote the fractional heat semigroup and let $\mathbb P$ be the Leray projector onto divergence-free vector fields. Applying $\mathbb P$ to the first equation of \eqref{NS}, we eliminate the pressure and obtain
$$
u_t+(-\Delta)^\gamma u
+
\mathbb P(u\cdot\nabla u)
+
\alpha\mathbb P(|u|^2u)
=0.
$$
Accordingly, a function $u$ is said to be a mild solution of \eqref{NS} on an interval $[0,T]$ if
$$
\begin{aligned}
u(t)
={}&
e^{-t(-\Delta)^\gamma}u_0
-
\int_0^t
e^{-(t-\tau)(-\Delta)^\gamma}
\mathbb P\bigl(u(\tau)\cdot\nabla u(\tau)\bigr)
\,d\tau
-
\alpha
\int_0^t
e^{-(t-\tau)(-\Delta)^\gamma}
\mathbb P\bigl(|u(\tau)|^2u(\tau)\bigr)
\,d\tau ,
\end{aligned}
$$
for every $t\in[0,T]$. Since $\operatorname{div}u=0$, one may equivalently write
$$
u\cdot\nabla u
=
\operatorname{div}(u\otimes u),
$$
so that the quadratic contribution is given by
$$
\mathbb P[\operatorname{div}(u\otimes u)].
$$
The mild formulation is especially useful at the regularity considered here, since it avoids interpreting each derivative in the classical pointwise sense. The linear dissipation is completely encoded in the semigroup $S_\gamma(t)$, while the nonlinear terms are handled through Duhamel integrals. The smoothing and decay of $S_\gamma(t)$ can therefore be used directly against the Fourier convolution structure of the nonlinearities. This is the basic mechanism behind the existence, blow-up, and stability results obtained below.

We now present the main results of the paper.  Our first result provides the basic well-posedness theory required throughout the paper. It treats both the critical case $\gamma=\frac12$ and the subcritical case $\gamma>\frac12$, and it also separates the effect of the damping term. In the critical undamped problem, small initial data generate global mild solutions. In the critical damped problem, we obtain a local theory for sufficiently small data. In the subcritical regime, the additional strength of the fractional dissipation allows us to construct a unique local mild solution for arbitrary data in $\cX^0_{a,\sigma}(\bR^3)$.

\begin{theorem} \label{thm:solucaolocal}
Let $a \geq 0$, $\sigma \geq 1$ and $u_0 \in \cX^0_{a,\sigma}(\bR^3)$ such that $\hbox{div}\,u_0=0$. Then,
\begin{enumerate}
  \item[\emph{i)}] For $\gamma =\frac{1}{2}$ and $\alpha=0$, there exists a constant $C'>0$ such that, if $\|u_0\|_{\cX^0_{a,\sigma}}<C'$, then there is a unique solution
	$$
		u \in C_T(\cX^{0}_{a,\sigma}(\bR^3)) \cap L^1_T(\cX^{1}_{a,\sigma}(\bR^3))
	$$
	to the Navier--Stokes equations \emph{(\ref{NS})}, for every $T > 0$. Moreover,
\begin{align}\label{w28w}
\Vert u (t) \Vert_{\cX^0_{a,\sigma}} \leq 2 \Vert u_0 \Vert_{\cX^{0}_{a,\sigma}},
\quad\forall t\in [0,T].
\end{align}
If
\begin{align*}
u \in C([0,T^*), \cX^0_{a,\sigma}(\bR^3))
\cap
L^1_{\mathrm{loc}}([0,T^*), \cX^1_{a,\sigma}(\bR^3))
\end{align*}
is the maximal solution to the Navier--Stokes equations \emph{(\ref{NS})} with $\|u_0\|_{\cX^0_{a,\sigma}}<C'$, then $T^*=\infty$ and
\begin{align}\label{w30}
u \in C([0,\infty), \cX^0_{a,\sigma}(\bR^3))
\cap
L^1([0,\infty), \cX^1_{a,\sigma}(\bR^3)).
\end{align}

\item[\emph{ii)}] For $\gamma =\frac{1}{2}$ and $\alpha>0$, there exist a time $\bar T=\bar T (\alpha,u_0)>0$ and a constant $C'>0$ such that, if $\|u_0\|_{\cX^0_{a,\sigma}}<C'$, then there is a unique solution
$$
u \in C_{\bar T}(\cX^{0}_{a,\sigma}(\bR^3))
\cap
L^1_{\bar T}(\cX^{1}_{a,\sigma}(\bR^3))
$$
to the Navier--Stokes equations \emph{(\ref{NS})}. Moreover,
\begin{align}\label{w28}
\Vert u (t) \Vert_{\cX^0_{a,\sigma}}
\leq
2 \Vert u_0 \Vert_{\cX^{0}_{a,\sigma}},
\quad\forall t\in [0,\bar T].
\end{align}

\item[\emph{iii)}] For $\gamma>\frac{1}{2}$ and $\alpha\geq 0$, there exist a time $\bar T=\bar T(\alpha,\gamma,u_0)>0$ and a unique solution
$$
u \in C_{\bar T}(\cX^{0}_{a,\sigma}(\bR^3))
\cap
L^1_{\bar T}(\cX^{2\gamma}_{a,\sigma}(\bR^3))
$$
to the Navier--Stokes equations \emph{(\ref{NS})}. Moreover,
\begin{align*}
\Vert u (t) \Vert_{\cX^0_{a,\sigma}}
\leq
2 \Vert u_0 \Vert_{\cX^{0}_{a,\sigma}},
\quad\forall t\in [0,\bar T].
\end{align*}
\end{enumerate}
\end{theorem}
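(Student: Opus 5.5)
We prove all three parts by a fixed-point argument for the Duhamel map. Write
\[
\Phi(u)(t)=e^{-t(-\Delta)^\gamma}u_0-B(u,u)(t)-\alpha\,C(u,u,u)(t),
\]
where $B(u,v)=\int_0^t e^{-(t-\tau)(-\Delta)^\gamma}\rP\,\mathrm{div}(u\otimes v)\,d\tau$ and $C$ is the analogous trilinear damping integral. We work in
\[
E_T=C_T(\cX^0_{a,\sigma})\cap L^1_T(\cX^{2\gamma}_{a,\sigma}),
\qquad
\|u\|_{E_T}=\sup_{[0,T]}\|u\|_{\cX^0_{a,\sigma}}+\|u\|_{L^1_T\cX^{2\gamma}_{a,\sigma}} .
\]

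\textbf{Two basic tools.}

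The first is the sub-additivity $|\xi|^{1/\sigma}\le|\xi-\eta|^{1/\sigma}+|\eta|^{1/\sigma}$, which holds for $\sigma\ge1$. It gives
\[
e^{a|\xi|^{1/\sigma}}|\widehat{fg}(\xi)|\le \big(e^{a|\cdot|^{1/\sigma}}|\widehat f|\big)*\big(e^{a|\cdot|^{1/\sigma}}|\widehat g|\big)(\xi).
\]
Combining this with Young's inequality $L^1*L^1\subset L^1$ and $|\xi|^s\le 2^s(|\xi-\eta|^s+|\eta|^s)$, we get the product estimates
\[
\|fg\|_{\cX^0_{a,\sigma}}\le\|f\|_{\cX^0_{a,\sigma}}\|g\|_{\cX^0_{a,\sigma}},
\qquad
\|fg\|_{\cX^s_{a,\sigma}}\lesssim \|f\|_{\cX^s_{a,\sigma}}\|g\|_{\cX^0_{a,\sigma}}+\|f\|_{\cX^0_{a,\sigma}}\|g\|_{\cX^s_{a,\sigma}} .
\]
Since $\widehat{\rP}$ is a bounded multiplier, the projector does not affect these bounds.

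The second tool is the pointwise linear estimate in frequency. Multiplying the Duhamel formula by $e^{a|\xi|^{1/\sigma}}$, taking absolute values, and integrating in $\xi$ and then in time gives the energy-type bound
\[
\sup_t\|\Phi u\|_{\cX^0_{a,\sigma}}+\|\Phi u\|_{L^1_T\cX^{2\gamma}_{a,\sigma}}
\le \|u_0\|_{\cX^0_{a,\sigma}}+2\,\|N(u)\|_{L^1_T\cX^{0}_{a,\sigma}} .
\]
This follows from $\int_0^\infty |\xi|^{2\gamma}e^{-t|\xi|^{2\gamma}}dt=1$. In the $\mathrm{div}$ term we instead pull one factor $|\xi|$ out of the symbol and use $|\xi|\,e^{-(t-\tau)|\xi|^{2\gamma}}$.

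\textbf{Case i), $\gamma=\tfrac12$, $\alpha=0$.}

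Here $2\gamma=1$. The derivative costs $|\xi|$, and the product estimate with $s=1$ gives
\[
\|B(u,v)\|_{E_T}\le C_0\big(\|u\|_{L^\infty\cX^0}\|v\|_{L^1\cX^1}+\|u\|_{L^1\cX^1}\|v\|_{L^\infty\cX^0}\big)\le 2C_0\|u\|_{E_T}\|v\|_{E_T},
\]
with a constant independent of $T$. The standard lemma for $x=x_0+B(x,x)$ then gives a unique solution in the ball of radius $2\|u_0\|$ whenever $\|u_0\|<C'=1/(8C_0)$. The resulting bounds are uniform in $T$, which yields \eqref{w28w}, \eqref{w30} and $T^*=\infty$. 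Continuity in time follows from dominated convergence in frequency.

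\textbf{Case ii), $\gamma=\tfrac12$, $\alpha>0$.}

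The cubic term is estimated by
\[
\|C(u,u,u)\|_{L^\infty\cX^0}+\|C\|_{L^1\cX^1}\le 2\,\|\,|u|^2u\,\|_{L^1_T\cX^0}\le 2T\|u\|_{E_T}^3 .
\]
This is the \emph{main obstacle}: there is no derivative gain at criticality, so the only smallness comes from the factor $T$. We therefore choose $\bar T$ so small that $2\alpha\bar T(2\|u_0\|)^2\le 1/4$, which makes $\bar T$ depend on $\alpha$ and $u_0$. The quadratic term is still handled by the smallness $\|u_0\|<C'$. Contraction on the ball of radius $2\|u_0\|$ then follows, and the differences are treated by the same multilinear bounds.

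\textbf{Case iii), $\gamma>\tfrac12$.}

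Now no smallness of the data is available, so we obtain smallness from $T$ by interpolation. From
\[
|\xi|\le |\xi|^{0\cdot(1-\theta)}|\xi|^{2\gamma\theta},\qquad \theta=\tfrac1{2\gamma}<1,
\]
and Hölder's inequality in time, we get
\[
\|u\|_{L^1_T\cX^1_{a,\sigma}}\le T^{1-\theta}\|u\|_{L^\infty\cX^0}^{1-\theta}\|u\|_{L^1\cX^{2\gamma}}^{\theta}.
\]
The bilinear term is estimated in the same way, putting $|\xi|^{1-2\gamma}$ into the semigroup kernel so that the resulting time-kernel is integrable. This yields a factor $T^{1-1/(2\gamma)}$.

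The remaining difficulty is that $\|e^{-t(-\Delta)^\gamma}u_0\|_{L^1_T\cX^{2\gamma}}$ is not small in $T$ uniformly in $u_0$. To handle it, we split $u_0$ into its low frequencies $|\xi|\le R$ and high frequencies $|\xi|>R$.
\begin{itemize}
\item For the high frequencies we choose $R$ so that the $\cX^0_{a,\sigma}$ tail is small; this uses the integrability of $e^{a|\xi|^{1/\sigma}}\widehat{u_0}$.
\item For the low frequencies we take $T$ small relative to $R$.
\end{itemize}
Consequently, the $\cX^1$ norm of the linear part over $[0,T]$ tends to $0$ as $T\to0$. We then run the fixed point in the ball of radius $2\|u_0\|$ around zero, measured with the sup norm, together with a small ball in the $L^1_T\cX^{1}_{a,\sigma}$ norm. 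The resulting $\bar T$ depends on $u_0$ itself, not only on its norm, and also on $\alpha$ and $\gamma$.
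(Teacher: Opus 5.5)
Your proposal is correct in substance and follows essentially the paper's route. It is a Banach fixed point for the Duhamel map in $C_T(\cX^0_{a,\sigma})\cap L^1_T(\cX^{2\gamma}_{a,\sigma})$, built on the Gevrey product estimate and the frequency-wise kernel bounds. When $\gamma=\frac12$ it uses smallness of $\|u_0\|_{\cX^0_{a,\sigma}}$ for the quadratic term and of $\bar T$ for the cubic term. When $\gamma>\frac12$ it uses the factor $T^{1-\frac{1}{2\gamma}}$ from interpolation, plus dominated convergence for the free part (your frequency splitting).

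There is one bookkeeping slip. The free evolution has $E_T$-norm $\|u_0\|_{\cX^0_{a,\sigma}}+\int(1-e^{-T|\xi|^{2\gamma}})e^{a|\xi|^{1/\sigma}}|\widehat{u_0}|\,d\xi$, which approaches $2\|u_0\|_{\cX^0_{a,\sigma}}$, not $\|u_0\|_{\cX^0_{a,\sigma}}$. So the abstract lemma for $x=x_0+B(x,x)$ only yields $\sup_t\|u(t)\|_{\cX^0_{a,\sigma}}\le 4\|u_0\|_{\cX^0_{a,\sigma}}$. To get the stated factor $2$, constrain the $L^\infty_T\cX^0_{a,\sigma}$ and $L^1_T\cX^{2\gamma}_{a,\sigma}$ components separately, as the paper does with $\mathcal{F}_T$. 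Alternatively, shrink $C'$ and bound $\|B(u,u)\|_{L^\infty_T\cX^0_{a,\sigma}}$ a posteriori. In case iii), likewise take the small ball in the $L^1_T\cX^{2\gamma}_{a,\sigma}$ component of $E_T$ rather than in $L^1_T\cX^1_{a,\sigma}$; your splitting argument applies there verbatim.
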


A result corresponding to Theorem \ref{thm:solucaolocal} i) has recently been obtained by W. G. Melo, N. F. Rocha and N. dos S. Costa \cite{Nati}. Their proof, however, follows a different route from the one used here; see, in particular, Lemmas 3.1 and 3.2 of \cite{Nati}. We also recall that global existence for subcritical dissipative quasi-geostrophic and Navier--Stokes equations, namely $\frac12<\gamma\leq1$, has been established in the scaling-critical Lei--Lin space $\cX^{1-2\gamma}(\bR^3)$; see \cite{Bsubcritico,BNS} and the references therein. In addition, W. G. Melo, M. de Souza and T. S. R. Santos \cite{artigowilberthyagomanasses} obtained a global solution to \eqref{NS}, with $\gamma=\frac12$ and $\alpha=0$, in the class $C_b([0,\infty),X^0_{a,\sigma}(\mathbb{R}^3))$. We emphasize that our proof does not use the inequality (40) from \cite{artigowilberthyagomanasses}. Finally, J. Benameur and L. Jlali \cite{Bnovo} considered the case $\gamma=1$ and $\alpha>0$ in the noncritical Lei--Lin space $\cX^0(\bR^3)$; see Theorem 1.1 of \cite{Bnovo}. These works provide useful points of comparison, while the formulation above gives the common well-posedness framework used in the remainder of the present paper.

Our next result addresses the possible breakdown of the subcritical mild solution obtained in Theorem \ref{thm:solucaolocal} iii). The point is not only to show that the $\cX^0_{a,\sigma}(\mathbb{R}^3)$-norm must become unbounded if the maximal existence time is finite, but also to identify a nonintegrable quantity and a quantitative lower bound for its growth. Thus, finite-time blow-up, if it occurs, can not happen arbitrarily slowly.

\begin{theorem} \label{thm:blowup}
Let $a \geqslant 0$, $\sigma \geqslant 1$, $\gamma > \frac{1}{2}$, $\alpha\geq0$ and $u_0 \in \cX^0_{a,\sigma}(\bR^3)$ such that $\hbox{div}\,u_0=0$. Assume that
$$
u \in C([0,T^*), \cX^{0}_{a,\sigma}(\bR^3))
\cap
L^1_{\mathrm{loc}}([0,T^*), \cX^{2\gamma}_{a,\sigma}(\bR^3))
$$
is the maximal solution to the Navier--Stokes equations \emph{(\ref{NS})} obtained in Theorem \emph{\ref{thm:solucaolocal}} \emph{iii)}.
If $T^*<\infty$, then
\begin{enumerate}
  \item[\emph{i)}] $\displaystyle \limsup_{t \nearrow T^*} \Vert u(t) \Vert_{\cX^{0}_{a,\sigma}} = \infty$;
  \item[\emph{ii)}] $\displaystyle \int_t^{T^*} \left[\Vert u(\tau) \Vert_{\cX^{0}_{a,\sigma}}^{\frac{2\gamma}{2\gamma - 1}} + \Vert u(\tau) \Vert_{\cX^{0}_{a,\sigma}}^2\right] \,d\tau = \infty$;
  \item[\emph{iii)}] $\displaystyle \frac{[C_\gamma (\alpha + 1)]^{-1}}{T^* - t} \leq \Vert u(t) \Vert_{\cX^{0}_{a,\sigma}}^{\frac{2\gamma}{2\gamma - 1}} + \Vert u(t) \Vert_{\cX^{0}_{a,\sigma}}^2$,
\end{enumerate}
for every $t\in [0,T^*)$, where $C_\gamma>0$ depends only on $\gamma$.
\end{theorem}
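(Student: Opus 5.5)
The plan is to prove iii) first, since i) and ii) follow from it. The tool is a quantitative version of the local existence in Theorem \ref{thm:solucaolocal} iii): for $\gamma>\frac12$, the existence time depends explicitly on $\|u_0\|_{\cX^0_{a,\sigma}}$. I would redo the fixed-point argument in the space $E_T=C_T(\cX^0_{a,\sigma})$ with norm $\sup_t\|u(t)\|_{\cX^0_{a,\sigma}}$.

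The first step is the nonlinear estimates, done pointwise in frequency. Since $1\le e^{a|\xi|^{1/\sigma}}$ and $|\xi|^{1/\sigma}\le|\xi-\eta|^{1/\sigma}+|\eta|^{1/\sigma}$ for $\sigma\ge1$, we have $e^{a|\xi|^{1/\sigma}}\le e^{a|\xi-\eta|^{1/\sigma}}e^{a|\eta|^{1/\sigma}}$. Combined with Young's inequality for $L^1$ convolutions, this gives $\cX^0_{a,\sigma}$ as a Banach algebra: $\|fg\|_{\cX^0_{a,\sigma}}\le\|f\|\,\|g\|$ and $\||u|^2u\|\le\|u\|^3$. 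For the transport term, the Fourier multiplier of $e^{-(t-\tau)(-\Delta)^\gamma}\rP\,\mathrm{div}$ is bounded by $|\xi|e^{-(t-\tau)|\xi|^{2\gamma}}\le C_\gamma (t-\tau)^{-1/(2\gamma)}$. Hence
$$\Big\|\int_0^t e^{-(t-\tau)(-\Delta)^\gamma}\rP\,\mathrm{div}(u\otimes v)\,d\tau\Big\|_{E_T}\le C_\gamma T^{1-\frac1{2\gamma}}\|u\|_{E_T}\|v\|_{E_T},$$
which is integrable precisely because $\gamma>\frac12$. The damping Duhamel term is bounded by $\alpha T\|u\|_{E_T}^3$. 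Banach's fixed point theorem, applied in the ball of radius $2\|u_0\|$, works as soon as
$$C_\gamma T^{\frac{2\gamma-1}{2\gamma}}\|u_0\|+C\alpha T\|u_0\|^2\le c_0$$
for some small absolute $c_0$. It suffices that both $T\|u_0\|^{\frac{2\gamma}{2\gamma-1}}$ and $T\|u_0\|^2$ are at most $[C_\gamma(\alpha+1)]^{-1}$, so we may take
$$T_0(u_0)=\frac{[C_\gamma(\alpha+1)]^{-1}}{\|u_0\|^{\frac{2\gamma}{2\gamma-1}}+\|u_0\|^2}.$$
The step requiring the most care is bookkeeping the constants. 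The exponents of $T$ differ between the two terms, and the bound must be organized so that a single constant $C_\gamma(\alpha+1)$ appears, depending only on $\gamma$ besides $\alpha$.

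For iii), fix $t\in[0,T^*)$ and restart the problem from the datum $u(t)$, which is divergence free and lies in $\cX^0_{a,\sigma}$. By the above there is a solution on $[t,t+T_0(u(t))]$, and by uniqueness it coincides with $u$. Maximality then forces $t+T_0(u(t))\le T^*$, which rearranges exactly to iii). One should also check that the restarted solution lies in $L^1(\cX^{2\gamma}_{a,\sigma})$. This follows from the same frequency-wise argument, integrating $|\xi|^{2\gamma}e^{-s|\xi|^{2\gamma}}$ in time, so the glued solution belongs to the class of Theorem \ref{thm:solucaolocal} iii).

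Parts i) and ii) then follow from iii). For i), iii) gives $\|u(t)\|\to\infty$ as $t\nearrow T^*$, since the right side of iii) is bounded below by $c/(T^*-t)$; in particular the $\limsup$ is infinite. For ii), integrate iii) over $[t,T^*)$: the left side yields $\int_t^{T^*}\frac{c}{T^*-\tau}\,d\tau=\infty$.
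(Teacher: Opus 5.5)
Your route differs from the paper's. The paper argues in the order i) $\Rightarrow$ ii) $\Rightarrow$ iii):
\begin{itemize}
\item For i), it assumes $\|u\|_{\cX^0_{a,\sigma}}$ stays bounded, uses (\ref{w50}) to get the bound (\ref{w42}), shows $u(t_n)$ is Cauchy, and continues the solution from the limit $u^*$.
\item For ii), it combines the Gronwall bound (\ref{eq:B2}) with i).
\item For iii), it integrates the resulting differential inequality and uses ii).
\end{itemize}
You instead obtain iii) in one stroke from an explicit lifespan $T_0(u_0)$ in $C_T(\cX^0_{a,\sigma})$, based on $|\xi|e^{-s|\xi|^{2\gamma}}\le C_\gamma s^{-1/(2\gamma)}$. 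Your bookkeeping, $T^{\vartheta}\|u_0\|=(T\|u_0\|^{2\gamma/(2\gamma-1)})^{\vartheta}$ with $\vartheta=1-\frac{1}{2\gamma}$, is correct, and i), ii) then follow trivially. Your route avoids the formal frequency-wise energy inequality and the Cauchy/continuation argument. The paper's route produces (\ref{eq:B2}), which is the form reused with the smaller radius $a/\sigma$ in Corollary \ref{corollaryB1}. However, one step in your proposal fails as written.

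The gap is your claim that the restarted solution lies in $L^1(\cX^{2\gamma}_{a,\sigma})$. You need this, since $T^*$ is maximal in the class $C(\cX^0_{a,\sigma})\cap L^1_{\mathrm{loc}}(\cX^{2\gamma}_{a,\sigma})$ and otherwise the contradiction with maximality does not close. Integrating $|\xi|^{2\gamma}e^{-(t-\tau)|\xi|^{2\gamma}}$ in $t$ handles the free evolution and the damping term. For the transport term, however, it only gives
\[
\|B(u,u)\|_{L^1_T(\cX^{2\gamma}_{a,\sigma})}\le\int_0^T\|(u\otimes u)(\tau)\|_{\cX^1_{a,\sigma}}\,d\tau .
\]
This requires $u\in L^1_T(\cX^1_{a,\sigma})$, which is not known for a fixed point built only in $C_T(\cX^0_{a,\sigma})$. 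The pointwise kernel $|\xi|^{2\gamma+1}e^{-(t-\tau)|\xi|^{2\gamma}}\lesssim(t-\tau)^{-1-\frac{1}{2\gamma}}$ is not integrable either, so the argument as stated is circular.

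You can repair it in either of two ways:
\begin{itemize}
\item Bootstrap. The bound $|\xi|^{s+1}e^{-(t-\tau)|\xi|^{2\gamma}}\le C(t-\tau)^{-\frac{s+1-\mu}{2\gamma}}|\xi|^{\mu}$ together with (\ref{lem:produto}) upgrades $u\in L^1_T(\cX^{\mu}_{a,\sigma})$ to $u\in L^1_T(\cX^{s}_{a,\sigma})$ for every $s<\mu+2\gamma-1$. Finitely many steps reach $\cX^1_{a,\sigma}$, and then the endpoint computation above applies.
\item A priori estimate. If $t+T_0(u(t))>T^*$, your restart gives $\|u\|_{\cX^0_{a,\sigma}}\le 2\|u(t)\|_{\cX^0_{a,\sigma}}$ on $[t,T^*)$. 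Then (\ref{w50}) and Young's inequality give (\ref{w42}), and you can continue from the limit at $T^*$ via Theorem \ref{thm:solucaolocal} iii). This is exactly the paper's continuation step.
\end{itemize}

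Finally, ``by uniqueness'' must mean unconditional uniqueness in $C_T(\cX^0_{a,\sigma})$. The fixed point only gives uniqueness in the ball of radius $2\|u(t)\|_{\cX^0_{a,\sigma}}$. Unconditional uniqueness follows from the same bilinear Lipschitz bound applied on short subintervals.
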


For the undamped problem $\alpha=0$, the same argument gives sharper versions of parts ii) and iii). More precisely, they can be replaced by
\begin{enumerate}
  \item [ii')] $\displaystyle \int_t^{T^*} \Vert u(\tau) \Vert_{\cX^{0}_{a,\sigma}}^{\frac{2\gamma}{2\gamma - 1}}\,d\tau = \infty$;
  \item[iii')] $\displaystyle \frac{C_\gamma }{(T^* - t)^{\frac{2\gamma-1}{2\gamma}}} \leq \Vert u(t) \Vert_{\cX^{0}_{a,\sigma}},$
\end{enumerate}
for every $t\in [0,T^*)$. For details, see inequality (\ref{w51}) below and repeat the argument with $\alpha=0$. These conclusions were also obtained in \cite{Nata}; see Theorems 1.3 ii) and iii) therein. We further note that J. Benameur and L. Jlali \cite{Bnovo} studied the case $\gamma=1$ and $\alpha>0$ in the noncritical Lei--Lin space $\cX^0(\bR^3)$; see Theorem 1.2 of \cite{Bnovo}. The result above extends the discussion to fractional dissipation and, importantly for our purposes, keeps track of the Gevrey radius.

Motivated by P. L. Guidolin, W. G. Melo and T. S. R. Santos \cite{patricia}, we next refine the blow-up analysis by comparing norms with different Gevrey radii. This gives a family of necessary conditions for finite-time breakdown and shows that blow-up in the strongest Gevrey norm forces quantitative growth even after part of the exponential Fourier weight is removed. More precisely, we have the following result.

\begin{corollary}\label{corollaryB1}
Assume that $a\geq0$, $\sigma\geq1$, $\gamma>\frac{1}{2}$, $\alpha\geq0$ and $u_0\in \mathcal{X}_{a,\sigma}^0(\mathbb{R}^3)$ such that $\hbox{div}\,u_0=0$.
Assume that $u\in C([0,T^*),\mathcal{X}_{a,\sigma}^{0}(\mathbb{R}^3))$
is a maximal solution to the Navier--Stokes equations \emph{(\ref{NS})} obtained in Theorem \emph{\ref{thm:solucaolocal} iii)}. If $T^*<\infty$, then
\begin{enumerate}
\item[\emph{i)}] $\displaystyle\int_t^{T^*}\Big[\|u(\tau)\|_{\mathcal{X}_{\frac{a}{\sigma(\sqrt{\sigma})^{n-1}},\sigma}^{0}}^{\frac{2\gamma}{2\gamma-1}} + \|u(\tau)\|_{\mathcal{X}_{\frac{a}{\sigma(\sqrt{\sigma})^{n-1}},\sigma}^{0}}^{2}\Big]\;d\tau=\infty$;
\item[\emph{ii)}] $\displaystyle \|u(t)\|_{\mathcal{X}_{\frac{a}{(\sqrt{\sigma})^{n-1}},\sigma}^{0}}^{\frac{2\gamma}{2\gamma-1}} + \|u(t)\|_{\mathcal{X}_{\frac{a}{(\sqrt{\sigma})^{n-1}},\sigma}^{0}}^{2}\geq  \frac{[C_{\gamma}(\alpha+1)]^{-1}}{T^*-t}$;
\item[\emph{iii)}] $\displaystyle \limsup_{t\nearrow T^*} \|u(t)\|_{\mathcal{X}_{\frac{a}{(\sqrt{\sigma})^{n-1}},\sigma}^{0}}=\infty$;
\item[\emph{iv)}] $\displaystyle \|u(t)\|_{\mathcal{X}^0}^{\frac{2\gamma}{2\gamma-1}}+ \|u(t)\|_{\mathcal{X}^0}^{2}\geq \frac{[C_\gamma(\alpha+1)]^{-1}}{T^{*}-t}$, provided that $\sigma>1$.
\end{enumerate}
for every $t\in[0,T^*)$ and $n\in \mathbb{N}$, where $C_\gamma>0$ depends only on $\gamma$.
\end{corollary}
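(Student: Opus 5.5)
The strategy is to reduce everything to Theorem \ref{thm:blowup}, applied with the Gevrey radius $a$ replaced by a smaller radius $b\in(0,a]$. Since $e^{b|\xi|^{1/\sigma}}\le e^{a|\xi|^{1/\sigma}}$, we have $\|f\|_{\cX^0_{b,\sigma}}\le\|f\|_{\cX^0_{a,\sigma}}$. Hence $u_0\in\cX^0_{b,\sigma}(\bR^3)$, and Theorem \ref{thm:solucaolocal} iii) yields a maximal solution $u_b\in C([0,T^*_b),\cX^0_{b,\sigma}(\bR^3))$. The solution $u$ with radius $a$ is also a mild solution in the weaker class, so uniqueness gives $u_b=u$ on $[0,T^*)$ and $T^*_b\ge T^*$.

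Once we know $T^*_b=T^*$, Theorem \ref{thm:blowup} i)--iii), applied with radius $b$, gives the statements directly.
\begin{itemize}
\item Taking $b=a/(\sqrt\sigma)^{n-1}$ gives ii) and iii).
\item Taking $b=a/(\sigma(\sqrt\sigma)^{n-1})$ gives i).
\end{itemize}
The constant $[C_\gamma(\alpha+1)]^{-1}$ does not depend on $b$, because in Theorem \ref{thm:blowup} $C_\gamma$ depends only on $\gamma$. For iv) with $\sigma>1$, the radii $a/(\sqrt\sigma)^{n-1}$ decrease to $0$ as $n\to\infty$. By monotone (or dominated) convergence in $\xi$, $\|u(t)\|_{\cX^0_{a/(\sqrt\sigma)^{n-1},\sigma}}\to\|u(t)\|_{\cX^0}$; this limit is finite since $u(t)\in\cX^0_{a,\sigma}$. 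Passing to the limit in ii) then yields iv). The hypothesis $\sigma>1$ is used exactly here, since for $\sigma=1$ the radii do not shrink.

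The main obstacle is the reverse inequality $T^*_b\le T^*$: the solution cannot survive longer in the weaker Gevrey norm than in the stronger one. I would argue by contradiction. Suppose $T^*<T^*_b$. Then $u$ is bounded in $\cX^0_{b,\sigma}$ on $[0,T^*]$, and it lies in $L^1(0,T^*;\cX^{2\gamma}_{b,\sigma})$. The goal is to show that $\|u(t)\|_{\cX^0_{a,\sigma}}$ stays bounded up to $T^*$, contradicting Theorem \ref{thm:blowup} i).

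To do this, I would write the Duhamel formula for $e^{a|\xi|^{1/\sigma}}|\hat u(\xi,t)|$ and use $|\xi|^{1/\sigma}\le|\xi-\eta|^{1/\sigma}+|\eta|^{1/\sigma}$ in the Fourier convolutions of the quadratic and cubic terms. I would place the full weight $e^{a|\cdot|^{1/\sigma}}$ on one factor and control the remaining factors in the already bounded $\cX^0_{b,\sigma}$ and $\cX^{2\gamma}_{b,\sigma}$ norms. This gives a linear inequality of the form
$$
\|u(t)\|_{\cX^0_{a,\sigma}}\le\|u_0\|_{\cX^0_{a,\sigma}}+C\int_0^t\bigl(\|u\|_{\cX^{2\gamma}_{b,\sigma}}+\|u\|_{\cX^0_{b,\sigma}}^2\bigr)\|u\|_{\cX^0_{a,\sigma}}\,d\tau,
$$
with the dissipation absorbing the extra derivative as in the proof of Theorem \ref{thm:solucaolocal} iii). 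Gronwall's lemma then closes the argument.

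The step I expect to be delicate is reconciling the full weight $a$ on one factor with only radius $b$ control on the others. If the naive splitting fails, I would try an iterative scheme that raises the radius by a factor $\sqrt\sigma$ at each step, using $\sqrt\sigma\,x^{1/\sigma}\le\ldots$-type elementary inequalities between radii. This would explain the geometric sequence $a/(\sqrt\sigma)^{n-1}$ and suggests an induction on $n$, with base case $n=1$, which is exactly Theorem \ref{thm:blowup}.
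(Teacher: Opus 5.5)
Your reduction (prove $T^*_b=T^*$ for the relevant radii $b$, then apply Theorem \ref{thm:blowup} at radius $b$, whose constant does not depend on the radius) is a legitimate way to organize the proof. Your derivation of iv) from ii) by dominated convergence is exactly the paper's. The gap is in the step you flag as delicate, which is the real content of the corollary.

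The splitting you propose, $|\xi|^{1/\sigma}\le|\xi-\eta|^{1/\sigma}+|\eta|^{1/\sigma}$, gives $e^{a|\xi|^{1/\sigma}}\le e^{a|\xi-\eta|^{1/\sigma}}e^{a|\eta|^{1/\sigma}}$. That puts weight $a$ on \emph{both} factors, so you cannot keep only radius $b$ on the second one. It yields only the nonlinear bound $\|u\otimes u\|_{\cX^1_{a,\sigma}}\lesssim\|u\|_{\cX^0_{a,\sigma}}\|u\|_{\cX^1_{a,\sigma}}$, and bounds at radius $b$ then give no control of $\|u\|_{\cX^0_{a,\sigma}}$. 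Moreover, no product estimate of the form you need holds for arbitrary $b\in(0,a]$. Testing with two nonnegative Fourier bumps near $\xi_0/2$, $|\xi_0|\to\infty$, forces $b\ge(2^{1/\sigma}-1)a$. Finally, the coefficient $\|u\|_{\cX^{2\gamma}_{b,\sigma}}$ in your displayed inequality is not what comes out: the derivative falls on the higher-frequency factor, which carries the full weight $a$.

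The missing idea is the loss-of-radius splitting. For $|\eta|\le|\xi-\eta|$, concavity of $x\mapsto x^{1/\sigma}$ gives $|\xi|^{1/\sigma}\le|\xi-\eta|^{1/\sigma}+\sigma^{-1}|\eta|^{1/\sigma}$; this is the content of the first line of (\ref{lem:produto}). It yields
\[
\|u\otimes u\|_{\cX^1_{a,\sigma}}\le C\|u\|_{\cX^0_{a/\sigma,\sigma}}\|u\|_{\cX^1_{a,\sigma}},\qquad \||u|^2u\|_{\cX^0_{a,\sigma}}\le C\|u\|^2_{\cX^0_{a/\sigma,\sigma}}\|u\|_{\cX^0_{a,\sigma}}.
\]
Combining these with (\ref{lem:split}), and using Young's inequality to absorb into the radius-$a$ dissipation, gives the linear Gronwall bound
\[
\|u(s)\|_{\cX^0_{a,\sigma}}\le\|u(t)\|_{\cX^0_{a,\sigma}}\exp\Big\{C_\gamma(\alpha+1)\int_t^s\Big[\|u\|_{\cX^0_{a/\sigma,\sigma}}^{\frac{2\gamma}{2\gamma-1}}+\|u\|_{\cX^0_{a/\sigma,\sigma}}^{2}\Big]\,d\tau\Big\}.
\]
With this, your contradiction argument closes, but only for $b\in[a/\sigma,a]$. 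Smaller radii require iterating: $T^*_a=T^*_{a/\sigma}=T^*_{a/\sigma^2}=\cdots$, together with monotonicity of $T^*_b$ in $b$.

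The paper uses the bound more directly:
\begin{enumerate}
\item[1.] Combined with Theorem \ref{thm:blowup} i), it gives i) for $n=1$ with no comparison of lifespans at all.
\item[2.] The nonlinear Gronwall estimate at radius $a/\sigma$ holds on $[0,T^*)$ simply because $u$ solves the equation in the weaker class. Together with the ODE argument, it gives ii) at radius $a/\sigma$.
\item[3.] Monotonicity in the radius then gives ii) and iii) at $a/\sqrt\sigma$.
\item[4.] One inducts with $a$ replaced by $a/\sqrt\sigma$.
\end{enumerate}
The factor $\sqrt\sigma$ is only the bookkeeping of this interleaving, not a separate elementary inequality.
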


Still following the ideas in \cite{patricia}, we return to the case $\gamma=1$ and $\alpha>0$, considered by J. Benameur and L. Jlali \cite{Bnovo} in the particular Lei--Lin space $\cX^0(\bR^3)$. The Gevrey structure allows us to obtain substantially more information on the possible growth of a solution near its maximal existence time. Indeed, by combining Corollary \ref{corollaryB1} iv) with inequality (\ref{lemanovo2}) below, we obtain an exponential lower bound for the $\cX^0_{a,\sigma}(\mathbb{R}^3)$-norm. This is one of the main reasons for working in the strict Lei--Lin--Gevrey space with $a>0$: the exponential Fourier weight detects a blow-up mechanism that is not visible at the level of the polynomial Lei--Lin norm alone.

\begin{corollary}\label{corollaryB2}
Assume that $a>0$, $\sigma>1$, $\gamma=1$, $\alpha\geq0$ and $u_0\in \mathcal{X}_{a,\sigma}^0(\mathbb{R}^3)\cap L^2(\mathbb{R}^3)$ such that $\hbox{div}\,u_0=0$.
Assume that $u\in C([0,T^*),\mathcal{X}_{a,\sigma}^{0}(\mathbb{R}^3))$
is a maximal solution to the Navier--Stokes equations \emph{(\ref{NS})} obtained in Theorem \emph{\ref{thm:solucaolocal} iii)}. If $T^*<\infty$, then
$$
\|u(t)\|_{\mathcal{X}_{a,\sigma}^{0}}
\geq
C(\alpha+1)^{-\frac{1}{2}}(T^*-t)^{-\frac{1}{2}}
\exp\left\{
a\left[
C(\alpha+1)^{-\frac{1}{2}}
(T^*-t)^{-\frac{1}{2}}
\|u_0\|_{L^2}^{-1}
\right]^{\frac{2}{3\sigma}}
\right\},
$$
for every $t\in[0,T^*)$, where $C>0$ is a constant.
\end{corollary}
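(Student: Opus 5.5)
The plan is to combine the polynomial lower bound of Corollary \ref{corollaryB1} iv), specialized to $\gamma=1$ (so $\frac{2\gamma}{2\gamma-1}=2$), with an interpolation inequality that bounds the $\cX^0$ norm by the $L^2$ norm and the Gevrey norm; this is the inequality referred to as (\ref{lemanovo2}). For $\gamma=1$, Corollary \ref{corollaryB1} iv) gives
\begin{equation*}
\|u(t)\|_{\cX^0}\geq C(\alpha+1)^{-\frac12}(T^*-t)^{-\frac12},\qquad t\in[0,T^*).
\end{equation*}
Next we need an a priori $L^2$ bound, namely $\|u(t)\|_{L^2}\leq\|u_0\|_{L^2}$. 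It follows from the energy identity: testing (\ref{NS}) with $u$, the transport and pressure terms vanish because $\mbox{div}\,u=0$, and the damping term contributes the nonnegative quantity $\alpha\int|u|^4$. To justify this at the regularity of the mild solution, I would use that for $t>0$ the solution lies in $\cX^{2}_{a,\sigma}$ for a.e. $t$, together with Gevrey smoothness and $u_0\in L^2$. Propagation of $L^2$ along the mild solution could be verified through the Duhamel formula or by approximation. This justification is the technical point I expect to be the main obstacle; for the plan I take it as standard.

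The key lemma is the interpolation (\ref{lemanovo2}): for $f\in\cX^0_{a,\sigma}\cap L^2$,
\begin{equation*}
\|f\|_{\cX^0}\leq C\,\|f\|_{L^2}^{\theta}\,\|f\|_{\cX^0_{a,\sigma}}^{1-\theta}\quad\text{or rather a form}\quad \|f\|_{\cX^0}\leq C\Bigl(\|f\|_{\cX^0_{a,\sigma}}e^{-aR^{1/\sigma}}+R^{3/2}\|f\|_{L^2}\Bigr),
\end{equation*}
valid for every $R>0$. To prove it, split the frequency integral at $|\xi|=R$. On $|\xi|\le R$, Cauchy--Schwarz and Plancherel give $\int_{|\xi|\le R}|\hat f|\le CR^{3/2}\|f\|_{L^2}$. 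On $|\xi|>R$, we have $e^{a|\xi|^{1/\sigma}}\geq e^{aR^{1/\sigma}}$, which gives the exponential term.

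Finally, set $A=C(\alpha+1)^{-1/2}(T^*-t)^{-1/2}$ and choose $R$ so that $CR^{3/2}\|u_0\|_{L^2}=A/2$, that is, $R=[cA\|u_0\|_{L^2}^{-1}]^{2/3}$. Then
\begin{equation*}
A\leq \|u(t)\|_{\cX^0}\leq \frac{A}{2}+C\|u(t)\|_{\cX^0_{a,\sigma}}e^{-aR^{1/\sigma}},
\end{equation*}
so $\|u(t)\|_{\cX^0_{a,\sigma}}\geq cA\exp\{a[cA\|u_0\|_{L^2}^{-1}]^{\frac{2}{3\sigma}}\}$. Renaming constants gives exactly the stated bound, with exponent $\frac{2}{3\sigma}$ coming from $R^{1/\sigma}$ and $R\sim A^{2/3}$. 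The hypothesis $\sigma>1$ is needed only to invoke Corollary \ref{corollaryB1} iv).
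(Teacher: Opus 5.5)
Your proof is correct, and it converts the $\mathcal{X}^0$ lower bound into the Gevrey bound more directly than the paper does. Both arguments start from Corollary \ref{corollaryB1} iv) with $\gamma=1$ and from the energy bound $\|u(t)\|_{L^2}\le\|u_0\|_{L^2}$. The paper also only sketches the energy bound (Fourier truncations, weak compactness, lower semicontinuity), so taking it as standard puts you on the same footing.

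The paper proceeds term by term. It uses the interpolation (\ref{wilber3}), $\|f\|_{\mathcal{X}^0}\le C_0\|f\|_{L^2}^{\frac{2\delta}{2\delta+3}}\|f\|_{\mathcal{X}^\delta}^{\frac{3}{2\delta+3}}$, with $\delta=k/\sigma$ for every $k$. This gives the lower bounds (\ref{W1}) on each $\|u(t)\|_{\mathcal{X}^{k/\sigma}}$. It then multiplies by $a^k/k!$ and sums, using $\|u\|_{\mathcal{X}^0_{a,\sigma}}=\sum_{k\ge0}\frac{a^k}{k!}\|u\|_{\mathcal{X}^{k/\sigma}}$ to rebuild the exponential. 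For this to work, $C_0$ must be independent of $\delta$, which is why the paper rederives (\ref{wilber3}) by a frequency splitting with $C_0=C_L+1$.

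You do that same splitting only once, with the Gevrey weight $e^{-aR^{1/\sigma}}$ in place of $R^{-\delta}$ on the high frequencies. You then choose $R$ so that the low-frequency part absorbs half of the lower bound. This avoids both the uniformity-in-$\delta$ issue and the series manipulation, and it produces the same exponent $\frac{2}{3\sigma}$. What the paper's route gives in addition is the explicit family of polynomial blow-up rates $\|u(t)\|_{\mathcal{X}^{k/\sigma}}\gtrsim (T^*-t)^{-\frac12-\frac{k}{3\sigma}}$.

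Three cosmetic remarks:
\begin{enumerate}
\item The estimate you actually prove is not (\ref{lemanovo2}); in the paper that label is the embedding $\|f\|_{\mathcal{X}^\delta}\le C\|f\|_{\mathcal{X}^0_{a,\sigma}}$.
\item The first product-type inequality you display is never used and can be dropped.
\item To have a single constant $C$ in both places, as in the statement, take the smaller of your two constants and use that the right-hand side is increasing in each.
\end{enumerate}
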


We conclude the introduction with a stability result for global mild solutions of the critical dissipative Navier--Stokes equations without damping, that is, $\gamma=\frac12$ and $\alpha=0$. Stability results for the subcritical case $\gamma>\frac12$ have been studied in several settings; see, for instance, \cite{BNS,Robert}. Here, we show that the small global solution constructed in Theorem \ref{thm:solucaolocal} i) is stable under sufficiently small perturbations of its initial datum in the same critical Lei--Lin--Gevrey space. More precisely, let $v_0\in\cX^{0}_{a,\sigma}(\bR^3)$ be divergence free and sufficiently close to $u_0$. We consider the corresponding system
\begin{equation}\label{NSv}	\tag{NS$v$}
\left\{
\begin{array}{l}
v_t
\;\!+\,
(-\Delta)^{\frac{1}{2}}\,v
\,+\,
v \cdot \nabla v
\,+\,
\nabla \;\!q \:\!
\;=\;
0, \quad x\in\mathbb{R}^3, t>0;\\
\mbox{div}\:v  \;=\; 0, \quad x\in\mathbb{R}^3, t>0;\\
v(x,0) \,=\, v_0(x), \quad x\in\mathbb{R}^3,
\end{array}
\right.
\end{equation}
where $q$ denotes the pressure associated with $v$. We prove that the solution $v$ is also global and belongs to the same class as $u$, and, more importantly, that the distance between the two solutions remains quantitatively controlled for all positive times. Thus, a small perturbation of a small global critical solution does not produce a different long-time regime. The uniform estimate (\ref{w28w}) is essential in closing the perturbative argument; see (\ref{w6}) and (\ref{w7}) below.

\begin{theorem} \label{thm:solucaoestabilidade}
Let $a \geq 0$, $\sigma \geq 1$, $\gamma=\frac{1}{2}$, $\alpha=0$ and $u_0 \in \cX^0_{a,\sigma}(\bR^3)$ such that $\hbox{div}\,u_0=0$. Suppose that
$$
\Vert u_0 \Vert_{\cX^{0}_{a,\sigma}}< \frac{C'}{2},
$$
where $C'$ is the constant given in Theorem \emph{\ref{thm:solucaolocal} i)}, and assume that
\begin{align}\label{w1}
u \in C([0,\infty), \cX^0_{a,\sigma}(\bR^3))
\cap
L^1([0,\infty), \cX^1_{a,\sigma}(\bR^3))
\end{align}
is the global solution to the Navier--Stokes equations \emph{(\ref{NS})} obtained in Theorem \emph{\ref{thm:solucaolocal} i)}.
If $v_0 \in \cX^{0}_{a,\sigma}(\bR^3)$ satisfies $\hbox{div}\,v_0=0$ and
\begin{equation} \label{eq:H1}
\Vert u_0 - v_0 \Vert_{\cX^{0}_{a,\sigma}}
<
\frac{C'}{2}
\exp \left(
-C''
\int_0^\infty
\Vert u(\tau) \Vert_{\cX^{1}_{a,\sigma}}
\,d\tau
\right),
\end{equation}
for an appropriate constant $C''>0$, then the Navier--Stokes equations \emph{(\ref{NSv})} admit a unique global solution
$$
v \in C([0,\infty), \cX^{0}_{a,\sigma}(\bR^3))
\cap
L^1([0,\infty), \cX^1_{a,\sigma}(\bR^3))
$$
satisfying
\begin{align}\label{estabilidade}
\Vert u(t) - v(t) \Vert_{\cX^{0}_{a,\sigma}}
+
\frac{1}{2}
\int_0^t
\Vert u(\tau) - v(\tau) \Vert_{\cX^{1}_{a,\sigma}}
\,d\tau
\leq
\Vert u_0 - v_0 \Vert_{\cX^{0}_{a,\sigma}}
\exp \left(
C''
\int_0^\infty
\Vert u(\tau) \Vert_{\cX^{1}_{a,\sigma}}
\,d\tau
\right),
\end{align}
for every $t \geq 0$.
\end{theorem}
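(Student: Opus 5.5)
We will take $v$ to be the solution given by Theorem \ref{thm:solucaolocal} i) and show that $w=v-u$ satisfies a linear-type energy inequality in $\cX^0_{a,\sigma}$ with a Gronwall factor driven by $\int_0^\infty\|u\|_{\cX^1_{a,\sigma}}$. Since
$$\|v_0\|_{\cX^0_{a,\sigma}}\le \|u_0\|_{\cX^0_{a,\sigma}}+\|u_0-v_0\|_{\cX^0_{a,\sigma}}<\tfrac{C'}{2}+\tfrac{C'}{2}=C',$$
part i) applies directly to $v_0$. It gives a unique global solution $v\in C([0,\infty),\cX^0_{a,\sigma})\cap L^1([0,\infty),\cX^1_{a,\sigma})$ with $\|v(t)\|_{\cX^0_{a,\sigma}}\le 2\|v_0\|_{\cX^0_{a,\sigma}}$, and the analogous bound (\ref{w28w}) holds for $u$. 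Existence, uniqueness and the solution class are therefore already settled. The work lies entirely in the quantitative estimate (\ref{estabilidade}).

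Write $w=u-v$. Then $w(0)=u_0-v_0$, $\operatorname{div}w=0$, and
$$w_t+(-\Delta)^{1/2}w+\rP\bigl[\operatorname{div}(w\otimes u)+\operatorname{div}(v\otimes w)\bigr]=0.$$
Work on the Fourier side, multiply by $e^{a|\xi|^{1/\sigma}}$, and use $\partial_t|\hat w|\le \mathrm{Re}(\partial_t\hat w\cdot\overline{\hat w}/|\hat w|)$ (or the Duhamel formula directly). Integrating in $\xi$ and then in time gives
$$\|w(t)\|_{\cX^0_{a,\sigma}}+\int_0^t\|w\|_{\cX^1_{a,\sigma}}\le\|w_0\|_{\cX^0_{a,\sigma}}+\int_0^t\!\!\int|\xi|e^{a|\xi|^{1/\sigma}}\bigl(|\hat w|*|\hat u|+|\hat v|*|\hat w|\bigr)\,d\xi\,d\tau.$$
The nonlinear term is controlled with $|\xi|\le|\xi-\eta|+|\eta|$ and the subadditivity $e^{a|\xi|^{1/\sigma}}\le e^{a|\xi-\eta|^{1/\sigma}}e^{a|\eta|^{1/\sigma}}$, valid since $\sigma\ge1$. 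This is the same convolution estimate used in the proof of Theorem \ref{thm:solucaolocal} i), and it yields
$$\int|\xi|e^{a|\xi|^{1/\sigma}}(|\hat f|*|\hat g|)\le \|f\|_{\cX^1_{a,\sigma}}\|g\|_{\cX^0_{a,\sigma}}+\|f\|_{\cX^0_{a,\sigma}}\|g\|_{\cX^1_{a,\sigma}}.$$

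The main obstacle is that at $\gamma=\frac12$ there is no gain of derivatives. Terms of the form $\|w\|_{\cX^1_{a,\sigma}}\|u\|_{\cX^0_{a,\sigma}}$ and $\|w\|_{\cX^1_{a,\sigma}}\|v\|_{\cX^0_{a,\sigma}}$ cannot be put into Gronwall; they must be absorbed into the dissipation on the left. This is exactly where the uniform bounds enter. By (\ref{w28w}), $\|u(\tau)\|_{\cX^0_{a,\sigma}}\le 2\|u_0\|_{\cX^0_{a,\sigma}}<C'$, and $\|v(\tau)\|_{\cX^0_{a,\sigma}}\le 2\|v_0\|_{\cX^0_{a,\sigma}}$. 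To make $\|v\|$ small as well, I would rewrite $v=u-w$, so the $v$-term splits into a $u$-term plus the quadratic term $w\otimes w$. Taking $C'$ small (as it is in Theorem \ref{thm:solucaolocal} i)), say $4C_0C'\le\frac14$ where $C_0$ is the convolution constant, the term $C_0\|u\|_{\cX^0_{a,\sigma}}\|w\|_{\cX^1_{a,\sigma}}$ is absorbed with room to spare. The quadratic term $C_0\|w\|_{\cX^0_{a,\sigma}}\|w\|_{\cX^1_{a,\sigma}}$ is absorbed as long as $\|w\|_{\cX^0_{a,\sigma}}\le C'$. The remaining terms $C_0\|u\|_{\cX^1_{a,\sigma}}\|w\|_{\cX^0_{a,\sigma}}$ go into Gronwall. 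After absorption one obtains
$$\|w(t)\|_{\cX^0_{a,\sigma}}+\tfrac12\int_0^t\|w\|_{\cX^1_{a,\sigma}}\le\|w_0\|_{\cX^0_{a,\sigma}}+C''\int_0^t\|u\|_{\cX^1_{a,\sigma}}\|w\|_{\cX^0_{a,\sigma}}\,d\tau.$$

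Gronwall's inequality then gives (\ref{estabilidade}). The condition $\|w\|_{\cX^0_{a,\sigma}}\le C'$ used in the absorption is closed by a continuity argument. Let $t_0$ be the supremum of the times where $\|w\|_{\cX^0_{a,\sigma}}<C'$. On $[0,t_0)$ the Gronwall bound together with hypothesis (\ref{eq:H1}) gives $\|w\|_{\cX^0_{a,\sigma}}<\frac{C'}{2}$, so by continuity of $w$ in $\cX^0_{a,\sigma}$ we get $t_0=\infty$. Since $u\in L^1([0,\infty),\cX^1_{a,\sigma})$ by (\ref{w1}), the exponential factor is finite and uniform in $t$, and (\ref{estabilidade}) holds for every $t\ge0$.
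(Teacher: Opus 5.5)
Your proposal is correct and follows essentially the same route as the paper. The paper likewise obtains $v$ from Theorem \ref{thm:solucaolocal} i) via $\|v_0\|_{\cX^0_{a,\sigma}}<C'$, writes the difference equation with exactly your splitting $\bar u\cdot\nabla u+u\cdot\nabla\bar u-\bar u\cdot\nabla\bar u$, derives the same Fourier-side inequality, absorbs the $\|u\|_{\cX^0_{a,\sigma}}\|\bar u\|_{\cX^1_{a,\sigma}}$ term through (\ref{w28w}) and the quadratic term through a bootstrap smallness of $\|\bar u\|_{\cX^0_{a,\sigma}}$, and closes with Gronwall and a continuity argument; the only cosmetic difference is that the paper uses the bootstrap threshold $\frac{1}{4C}$ rather than $C'$.
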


Let us finally recall that stability results related to Theorem \ref{thm:solucaoestabilidade} have been studied for the subcritical Navier--Stokes equations, namely $\gamma>\frac12$, in several functional settings, including Sobolev--Gevrey spaces; see \cite{Robert} and the references therein. The result above addresses instead the borderline dissipation $\gamma=\frac12$ directly in the critical Lei--Lin--Gevrey framework and gives a global quantitative estimate for the propagation of perturbations.

\begin{remark}
By observing the techniques presented in this paper, it is interesting to notice that we also have a technical  explanation for our  choice of the specific Lei--Lin--Gevrey space $\cX^{0}_{a,\sigma}(\mathbb{R}^3)$. In fact, if we reevaluate the arguments used in the proof of Theorem \ref{thm:solucaolocal}, with the space $\cX^{s}_{a,\sigma}(\mathbb{R}^3)$ instead, we should estimate, for example, the terms 
\begin{align*}
	\|[(\phi\cdot \psi)\theta]\|_{\cX^{s}_{a,\sigma}} \hbox{  and  } \|v\|_{\cX^{0}_{a,\sigma}},
\end{align*}
%by 
%\begin{align*}
%	\|\phi\|_{\cX^{s}_{a,\sigma}}\|\psi\|_{\cX^{s}_{a,\sigma}}\|\theta\|_{\cX^{s}_{a,\sigma}} \hbox{  and  } %\|v\|_{\cX^{s}_{a,\sigma}}^{1+\frac{s}{2\gamma}}\|v\|_{\cX^{s+2\gamma}_{a,\sigma}}^{-\frac{s}{2\gamma}}.
%\end{align*}
see (\ref{w181}) and (\ref{wfinal1}) below. However, because of (\ref{lem:produto}) and Lemma 2.5 ii) in \cite{Nata}, we would be obligated to assume $s\geq0$ and $s\leq0 $, respectively.

\end{remark}

%\noindent \textbf{Acknowledgments:}
%\subsection*{Acknowledgments}
%The author T.S.R. Santos is partially supported by FAPESP grant 2024/15587-1.

\section{Notations and definition}
	The most important notations, definitions and results used in this paper are given below:
	\begin{itemize}
	%	\item The $i$th partial derivative is denoted by $\partial_i = \partial /\partial x_i$ (for $i = 1,2,3$). Likewise, the partial derivative with respect to time $t$, will be denoted as $\partial_t = \partial/\partial t$.
		\item The Fourier transform of $f\in S'(\mathbb{R}^3)$ (space of tempered distributions) is given by
		$$
			\cF[f](\xi) = \hat f (\xi) := \int_{\bR^3}e^{-ix \cdot \xi} f(x) \,dx,\quad\forall \xi \in \bR^3.
		$$
%				\item Similarly, the inverse Fourier transform of $f$ is given by
%		$$
%			\cF^{-1}[f](x) = \check f(x) := \int_{\bR^3} e^{ix \cdot \xi} f(\xi) \,d\xi,\quad\forall x \in \bR^3.
%		$$
		\item The Lei-Lin-Gevrey space $\cX^s_{a,\sigma}(\bR^3)$, with $a \geq 0$, $\sigma \geq 1$ and $s\in \mathbb{R}$, is given by
		$$
			\cX^s_{a,\sigma}(\bR^3) = \{f \in \cS'(\bR^3) : \hat{f}\in L^1_{loc}(\mathbb{R}^3)\hbox{ and } \Vert f \Vert_{\cX^s_{a,\sigma}} < \infty\},
		$$
		where
		$$
			\Vert f \Vert_{\cX^s_{a,\sigma}} = \int_{\bR^3} |\xi|^s e^{a|\xi|^{\frac{1}{\sigma}}} |\hat f(\xi)| \,d\xi.
		$$
		 In addition, $\cX^s_{a,\sigma}(\bR^3)$ becomes  Lei-Lin space $\cX^s(\bR^3)$ if $a=0$, and it is important to highlight that $\cX^s_{a,\sigma}(\bR^3)$ ($a\geq0$) is a Banach space provided that $s\leq0$.
		\item %The fractional Laplacian $(-\Delta)^\gamma$, for $\gamma > 0$, is defined by
		%$$
		%	\cF [(-\Delta)^\gamma f](\xi) = |\xi|^{2\gamma} \hat f(\xi),\quad\forall \xi\in \mathbb{R}^3,
		%$$		and 
The fractional heat semigroup is given by
		$$
			\cF [e^{-t (-\Delta)^\gamma}f](\xi) = e^{-t|\xi|^{2\gamma}} \hat f(\xi),\quad\forall \xi\in \mathbb{R}^3,t\geq0,
		$$
		for all $f\in S'(\mathbb{R}^3)$. In this paper, we shall work in  the case $\gamma \geq \frac{1}{2}$.
		\item The tensor product between two functions $f,g\in S'(\mathbb{R}^3)$ is given by
		$$
			f \otimes g = (g_1 f, g_2f, g_3 f).
		$$
		\item Assume that $(X, \Vert \cdot \Vert_X)$ is a normed space and $T > 0$.
		The space $L^p_T(X)$, for $1 \leq p \leq \infty$, contains all measurable functions $f : [0,T] \to X$ such that
		$$
			\Vert f \Vert_{L^p_T(X)} = \left( \int_0^T \Vert f(t) \Vert_X^p \,dt \right)^{\frac{1}{p}}<\infty,
		$$
		if $1 \leq p < \infty$, and also
		$$
			\Vert f \Vert_{L^{\infty}_T(X)} = \esssup_{t \in [0,T]} \Vert f(t) \Vert_X<\infty,
		$$
		if $p = \infty$.
		Furthermore, the space $C_T(X)$ of continuous functions $f : [0,T] \to X$ will be endowed with the norm $\Vert \cdot \Vert_{L^{\infty}_T(X)}$.
		\item Constants denoted by $C$ may change only in estimates in which no numerical threshold is subsequently defined.  In the fixed-point argument, $K_2$ and $K_3$ denote fixed positive constants for the bilinear and trilinear estimates, respectively; every smallness and lifespan condition below is stated in terms of these frozen constants.  As usual, $C_r$ depends only on $r$.
\item Let $a \geq 0$, $\sigma \geq 1$ and $s\geq -1$, then 
	\begin{align}\label{lem:produto}
		\nonumber \Vert fg \Vert_{\cX^{s+1}_{a,\sigma}} &\leq C \big[\Vert f \Vert_{\cX^0_{\frac{a}{\sigma},\sigma}} \Vert g \Vert_{\cX^{s+1}_{a,\sigma}} + \Vert f \Vert_{\cX^{s+1}_{a,\sigma}} \Vert g \Vert_{\cX^0_{\frac{a}{\sigma},\sigma}} \big]\\
&\leq C \big[\Vert f \Vert_{\cX^0_{a,\sigma}} \Vert g \Vert_{\cX^{s+1}_{a,\sigma}} + \Vert f \Vert_{\cX^{s+1}_{a,\sigma}} \Vert g \Vert_{\cX^0_{a,\sigma}} \big],
	\end{align}
	where $C = 2^{s-2} \pi^{-3}$. 
The proof of this inequality above is given by \cite{Nata} (see Lemma 2.6 in \cite{Nata}).
\item 	Let $a \geqslant 0$, $\sigma \geqslant 1$, and $\gamma \geqslant \frac{1}{2}$. Then, it holds
	\begin{align}\label{lem:split}
		\Vert f \Vert_{\cX^{1}_{a,\sigma}} \leq \Vert f \Vert_{\cX^{0}_{a,\sigma}}^{1-\frac{1}{2\gamma}} \Vert f \Vert^{\frac{1}{2\gamma}}_{\cX^{2\gamma}_{a,\sigma}}.
	\end{align}
The proof of this inequality above is given by \cite{Nata} (see Lemma 2.5 i) in \cite{Nata}).
%\item 	Let $a>0$, $\sigma\geq 1$, $\mu>1$, $s\leq0$, $\delta\in\mathbb{R}$ and $f\in \mathcal{X}_{a,\sigma}^{s+\delta}(\mathbb{R}^3)$. Then, there exists a positive constant %$C_{a,s,\delta,\sigma,\mu}$ such that
%	\begin{align}\label{lema0}
%	\|f\|_{\mathcal{X}_{\frac{a}{\mu},\sigma}^{\delta}}\leq C_{a,s,\delta,\sigma,\mu}\|f\|_{\mathcal{X}_{a,\sigma}^{s+\delta}}.
%	\end{align}
%The proof of this inequality above is given by \cite{Nata} (see Lemma 2.4 in \cite{Nata}).
\item 	Let $\delta>0$; then, the following inequality holds:
	\begin{align}\label{lema}
	\|f\|_{\mathcal{X}^0}\leq C \|f\|_{L^2}^{\frac{2\delta}{2\delta+3}}\|f\|_{\mathcal{X}^\delta}^{\frac{3}{2\delta+3}}.
	\end{align}
For more details, see Lemma 2.2 in \cite{Nata}.
\item 	Let  $a>0$, $\sigma\geq1$ and  $\delta\geq0$. Then,  there is a positive constant $C_{a,s,\delta,\sigma}$ such that
	\begin{align}\label{lemanovo2}
	\|f\|_{\mathcal{X}^{\delta}}\leq C_{a,\delta,\sigma}\|f\|_{\mathcal{X}_{a,\sigma}^{0}}.
	\end{align}
For more details, see Lemma 2.3 in \cite{Nata}.
	\end{itemize}

\section{Proof of our results} \label{sec:demonstracao}

In this section, we shall present the proof of our main results, which ones are given by Theorems \ref{thm:solucaolocal}, \ref{thm:blowup} and \ref{thm:solucaoestabilidade}, and Corollaries \ref{corollaryB1} and \ref{corollaryB2}. It is important to point out that we shall adapt and improve some of the arguments established by \cite{patricia,Bnovo,Nata,Robert} (see references therein as well). Let us prove Theorem \ref{thm:solucaolocal} firstly.

\bigskip
\noindent\textbf{Proof of Theorem \ref{thm:solucaolocal}:}
Let $T > 0$ (to be established as follows) and define the Banach space
$$
	\mathcal{X}_T = C_T(\cX^{0}_{a,\sigma}(\bR^3)) \cap L^1_T(\cX^{2\gamma}_{a,\sigma}(\bR^3))
$$
%\revision{Sempre tenho minhas duvidas nessa parte, pq $\mathcal{X}^s$ não é Banach para $s\geq0$. O certo não seria considerar $\mathcal{Y}^s= \mathcal{X}^s \cap \mathcal{X}^0$? pois esse é sempre Banach para todo $s\geq0$. Ou entao fornecer uma prova rapida de que esse espaço é Banach} 
endowed with the norm
$$
	\Vert u \Vert_T = \Vert u \Vert_{L^\infty_T(\cX^{0}_{a,\sigma})} + \Vert u \Vert_{L^1_T(\cX^{2\gamma}_{a,\sigma})},\quad\forall u\in \mathcal{X}_T.
$$
For $\gamma\geq\frac{1}{2}$ and $R > 0$ (to be chosen below), define the closed
subset $\mathcal{F}_{T}$ of $\mathcal{X}_T$ by
$$
	\mathcal{F}_{T} = \{ u \in \mathcal{X}_T : \Vert u \Vert_{L^{\infty}_T(\cX^0_{a,\sigma})} \leq  2\Vert u_0 \Vert_{\cX^{0}_{a,\sigma}} \text{ and } \Vert u \Vert_{L^1_T(\cX^{2\gamma}_{a,\sigma})} \leq R\}.
$$
Now, consider the operator $\Psi : \mathcal{F}_{T} \to \mathcal{X}_T$ given by
\begin{align}\label{w24}
	\Psi[u](t) = e^{-t(-\Delta)^{\gamma}} u_0 - B(u,u)(t)-\alpha B'(u,u,u)(t), \quad\forall u\in \mathcal{F}_{T},t\in [0,T],
\end{align}
where
\begin{align}\label{w52}
	B(w,v)(t) = \int_0^t e^{-(t-\tau)(-\Delta)^{\gamma}} \rP(w \cdot \nabla v)(\tau) \,d\tau,\quad\forall w,v\in\mathcal{F}_{T},t\in[0,T],
\end{align}
and also
\begin{align}\label{w53}
		B'(\phi,\psi,\theta)(t) = \int_0^t e^{-(t-\tau)(-\Delta)^{\gamma}} \rP [(\phi \cdot \psi)\theta](\tau) \,d\tau ,\quad\forall \phi,\psi,\theta\in\mathcal{F}_{T},t\in[0,T],
	\end{align}
with $\hbox{div}\, w = \hbox{div}\, v= \hbox{div}\,\phi=\hbox{div}\,\psi=\hbox{div}\,\theta=0$. Our goal is to apply  Banach Fixed Point Theorem to the operator $\Psi$ in order to show the existence and uniqueness of  solutions for the Navier-Stokes equations (\ref{NS}). Hence, it is enough to prove that
\begin{gather}
	\Psi(\mathcal{F}_{T}) \subseteq \mathcal{F}_{T} \label{eq:claim1} \,\hbox{ and also}\\
	\Vert \Psi[u] - \Psi[v] \Vert_T \leq \frac{1}{2} \Vert u - v \Vert_T, \quad \forall u, v \in \mathcal{F}_{T}. \label{eq:claim2}
\end{gather}

Let us start proving the inclusion presented in (\ref{eq:claim1}). This means that we shall show that $\Psi[u]\in \mathcal{F}_{T}$, for all $u\in \mathcal{F}_{T}$.
%$$
%	\Vert \Psi[u] \Vert_{L^{\infty}_T(\cX^{0}_{a,\sigma})} \leq 2 \Vert u_0 \Vert_{\cX^{0}_{a,\sigma}}
%$$
%and
%$$
	%\Vert \Psi[u] \Vert_{L^1_T(\cX^{1}_{a,\sigma})} \leq R,
%$$
%for any $u \in F_{RT}$.
Indeed, it is true that
\begin{align*}
	\Vert e^{-t(-\Delta)^\gamma} u_0 \Vert_{\cX^{0}_{a,\sigma}} &=\int_{\mathbb{R}^3} e^{a|\xi|^{\frac{1}{\sigma}}} e^{-t|\xi|^{2\gamma}}|\widehat{u_0}(\xi)|\,d\xi\leq\int_{\mathbb{R}^3} e^{a|\xi|^{\frac{1}{\sigma}}} |\widehat{u_0}(\xi)|\,d\xi=\Vert  u_0 \Vert_{\cX^{0}_{a,\sigma}},
\end{align*}
for all $t\in [0,T]$ and $\gamma \geq \frac{1}{2}$. Thereby, we can write the following inequality:
\begin{equation} \label{eq:A1}
	\Vert e^{-t(-\Delta)^\gamma} u_0 \Vert_{L^{\infty}_T(\cX^{0}_{a,\sigma})} \leq \Vert u_0 \Vert_{\cX^{0}_{a,\sigma}},
\end{equation}
%for every $T > 0$.
for $\gamma \geq \frac{1}{2}$. In addition, we can observe that the equalities below are true:
\begin{align}\label{w35}
	\nonumber\Vert e^{-t(-\Delta)^\gamma} u_0 \Vert_{L^{1}_T(\cX^{2\gamma}_{a,\sigma})} &=\int_0^T\int_{\mathbb{R}^3} |\xi|^{2\gamma}e^{a|\xi|^{\frac{1}{\sigma}}} e^{-t|\xi|^{2\gamma}}|\widehat{u_0}(\xi)|\,d\xi dt\\
\nonumber&=\int_{\mathbb{R}^3}|\xi|^{2\gamma} e^{a|\xi|^{\frac{1}{\sigma}}} |\widehat{u_0}(\xi)|\left(\int_0^Te^{-t|\xi|^{2\gamma}}\,dt\right)\,d\xi\\
&=\int_{\mathbb{R}^3} [1-e^{-T|\xi|^{2\gamma}}]e^{a|\xi|^{\frac{1}{\sigma}}} |\widehat{u_0}(\xi)|\,d\xi,
\end{align}
for $\gamma \geq\frac{1}{2}$. As a result, we deduce
\begin{align}\label{w16}
	\Vert e^{-t(-\Delta)^\gamma} u_0 \Vert_{L^{1}_T(\cX^{2\gamma}_{a,\sigma})}&\leq\Vert  u_0 \Vert_{\cX^{0}_{a,\sigma}}.
\end{align}
%Thus, by  (\ref{eq:A1}) and (\ref{w16}), it follows that
%\begin{equation*}
%	\Vert e^{-t(-\Delta)^\frac{1}{2}} u_0 \Vert_{T} \leq 2\Vert u_0 \Vert_{\cX^{0}_{a,\sigma}}.
%\end{equation*}
On the other hand, by the definition (\ref{w52}),  we obtain
\begin{align}\label{w18} 
	\nonumber \Vert B(w,v) (t)\Vert_{\cX^{0}_{a,\sigma}} &\leq \int_0^t \|e^{-(t-\tau)(-\Delta)^{\gamma}}\mathbb{P}(w\cdot \nabla v)(\tau)\|_{\cX^{0}_{a,\sigma}}\,d\tau\\
\nonumber&=\int_0^t \int_{\mathbb{R}^3}e^{a|\xi|^{\frac{1}{\sigma}}}e^{-(t-\tau)|\xi|^{2\gamma}}|\mathcal{F}[\mathbb{P}(w\cdot \nabla v)](\xi,\tau)|\,d\xi d\tau\\
\nonumber&\leq\int_0^t \int_{\mathbb{R}^3}|\xi|e^{a|\xi|^{\frac{1}{\sigma}}}|\mathcal{F}[v\otimes w](\xi,\tau)|\,d\xi d\tau\\
&=\int_0^t \|(v\otimes w)(\tau)\|_{\cX^{1}_{a,\sigma}} d\tau,
\end{align}
for all $t\in[0,T]$. Then,  (\ref{lem:produto}), (\ref{lem:split}) and Hölder's inequality imply that
\begin{align} \label{wfinal1}
	\nonumber \Vert B(w,v) (t)\Vert_{\cX^{0}_{a,\sigma}} &\leq  C\int_0^T [\|v\|_{\cX^{0}_{a,\sigma}}\|w\|_{\cX^{1}_{a,\sigma}}+\|v\|_{\cX^{1}_{a,\sigma}}\|w\|_{\cX^{0}_{a,\sigma}}] d\tau\\
&\leq  C\int_0^T [\|v\|_{\cX^{0}_{a,\sigma}} \Vert w \Vert_{\cX^{0}_{a,\sigma}}^{1-\frac{1}{2\gamma}} \Vert w \Vert^{\frac{1}{2\gamma}}_{\cX^{2\gamma}_{a,\sigma}}  +\Vert v \Vert_{\cX^{0}_{a,\sigma}}^{1-\frac{1}{2\gamma}} \Vert v \Vert^{\frac{1}{2\gamma}}_{\cX^{2\gamma}_{a,\sigma}}\|w\|_{\cX^{0}_{a,\sigma}}] d\tau\\
\nonumber &\leq  C[\|v\|_{L^{\infty}_T(\cX^{0}_{a,\sigma})}\|w\|_{L^{\infty}_T(\cX^{0}_{a,\sigma})}^{1-\frac{1}{2\gamma}} \int_0^T \Vert w \Vert^{\frac{1}{2\gamma}}_{\cX^{2\gamma}_{a,\sigma}}d\tau +\|w\|_{L^{\infty}_T(\cX^{0}_{a,\sigma})}\|v\|_{L^{\infty}_T(\cX^{0}_{a,\sigma})}^{1-\frac{1}{2\gamma}}\int_0^T \Vert v \Vert^{\frac{1}{2\gamma}}_{\cX^{2\gamma}_{a,\sigma}}d\tau]\\
\nonumber&\leq  CT^{1-\frac{1}{2\gamma}}[\|v\|_{L^{\infty}_T(\cX^{0}_{a,\sigma})}\|w\|_{L^{\infty}_T(\cX^{0}_{a,\sigma})}^{1-\frac{1}{2\gamma}} \|w\|_{L^{1}_T(\cX^{2\gamma}_{a,\sigma})}^{\frac{1}{2\gamma}} +\|w\|_{L^{\infty}_T(\cX^{0}_{a,\sigma})}\|v\|_{L^{\infty}_T(\cX^{0}_{a,\sigma})}^{1-\frac{1}{2\gamma}} \|v\|_{L^{1}_T(\cX^{2\gamma}_{a,\sigma})}^{\frac{1}{2\gamma}}],
\end{align}
for $\gamma \geq\frac{1}{2}$ and for all $t\in[0,T]$. Hence, it is true that  
\begin{align}\label{w17} 
	\nonumber \Vert B(w,v) \Vert_{L^\infty_T(\cX^{0}_{a,\sigma})} &\leq CT^{1-\frac{1}{2\gamma}}\|v\|_{L^{\infty}_T(\cX^{0}_{a,\sigma})}\|w\|_{L^{\infty}_T(\cX^{0}_{a,\sigma})}^{1-\frac{1}{2\gamma}} \|w\|_{L^{1}_T(\cX^{2\gamma}_{a,\sigma})}^{\frac{1}{2\gamma}}\\
&\quad +CT^{1-\frac{1}{2\gamma}}\|w\|_{L^{\infty}_T(\cX^{0}_{a,\sigma})}\|v\|_{L^{\infty}_T(\cX^{0}_{a,\sigma})}^{1-\frac{1}{2\gamma}} \|v\|_{L^{1}_T(\cX^{2\gamma}_{a,\sigma})}^{\frac{1}{2\gamma}},
\end{align}
for $\gamma\geq \frac{1}{2}$. In particular, it holds
\begin{align} \label{eq:A2}
	 \Vert B(u,u) \Vert_{L^{\infty}_T(\cX^{0}_{a,\sigma})} \leq  CT^{1-\frac{1}{2\gamma}}\|u\|_{L^{\infty}_T(\cX^{0}_{a,\sigma})}^{2-\frac{1}{2\gamma}}\|u\|_{L^{1}_T(\cX^{2\gamma}_{a,\sigma})}^{\frac{1}{2\gamma}}\leq  C_\gamma T^{1-\frac{1}{2\gamma}} R^{\frac{1}{2\gamma}}\|u_0\|_{\cX^{0}_{a,\sigma}}^{2-\frac{1}{2\gamma}},
\end{align}
for $\gamma \geq\frac{1}{2}$ and for all $u\in \mathcal{F}_T$. Analogously, one deduces
\begin{align*} 
	\nonumber \Vert B(w,v) \Vert_{L^1_T(\cX^{2\gamma}_{a,\sigma})} 
\nonumber&\leq\int_0^T \int_{\mathbb{R}^3} |\xi|^{2\gamma}e^{a|\xi|^{\frac{1}{\sigma}}}\int_0^te^{-(t-\tau)|\xi|^{2\gamma}}|\mathcal{F}[\mathbb{P}(w\cdot \nabla v)](\xi,\tau)|\,d\tau d\xi dt\\
\nonumber&\leq\int_{\mathbb{R}^3} |\xi|^{2\gamma+1}e^{a|\xi|^{\frac{1}{\sigma}}}\int_0^T\int_0^te^{-(t-\tau)|\xi|^{2\gamma}}|\mathcal{F}[(v\otimes w)](\xi,\tau)|\,d\tau  dt d\xi\\
\nonumber&=\int_{\mathbb{R}^3} |\xi|^{2\gamma+1}e^{a|\xi|^{\frac{1}{\sigma}}}\int_0^T |\mathcal{F}[(v\otimes w)](\xi,\tau)|\int_\tau^Te^{-(t-\tau)|\xi|^{2\gamma}}\,dt  d\tau d\xi\\
\nonumber&\leq\int_0^T \|(v\otimes w)(\tau)\|_{\cX^{1}_{a,\sigma}} d\tau.
\end{align*}
By (\ref{w18}) and (\ref{w17}), we obtain
\begin{align} \label{w19}
	\nonumber\Vert B(w,v) \Vert_{L^1_T(\cX^{2\gamma}_{a,\sigma})}&\leq  CT^{1-\frac{1}{2\gamma}}\|v\|_{L^{\infty}_T(\cX^{0}_{a,\sigma})}\|w\|_{L^{\infty}_T(\cX^{0}_{a,\sigma})}^{1-\frac{1}{2\gamma}} \|w\|_{L^{1}_T(\cX^{2\gamma}_{a,\sigma})}^{\frac{1}{2\gamma}}\\
&\quad +CT^{1-\frac{1}{2\gamma}}\|w\|_{L^{\infty}_T(\cX^{0}_{a,\sigma})}\|v\|_{L^{\infty}_T(\cX^{0}_{a,\sigma})}^{1-\frac{1}{2\gamma}} \|v\|_{L^{1}_T(\cX^{2\gamma}_{a,\sigma})}^{\frac{1}{2\gamma}},
\end{align}
for $\gamma\geq\frac{1}{2}$. In particular, we can write the following inequality:
\begin{align}\label{w20}
	 \Vert B(u,u) \Vert_{L^1_T(\cX^{2\gamma}_{a,\sigma})} \leq  CT^{1-\frac{1}{2\gamma}}\|u\|_{L^{\infty}_T(\cX^{0}_{a,\sigma})}^{2-\frac{1}{2\gamma}}\|u\|_{L^{1}_T(\cX^{2\gamma}_{a,\sigma})}^{\frac{1}{2\gamma}}\leq  C_\gamma T^{1-\frac{1}{2\gamma}} R^{\frac{1}{2\gamma}}\|u_0\|_{\cX^{0}_{a,\sigma}}^{2-\frac{1}{2\gamma}},
\end{align}
for $\gamma \geq\frac{1}{2}$ and for all $u\in \mathcal{F}_T$.

By (\ref{w53}), it is also true that
\begin{align}\label{w181} 
	\nonumber \Vert B'(\phi,\psi,\theta) (t)\Vert_{\cX^{0}_{a,\sigma}} &\leq \int_0^t \|e^{-(t-\tau)(-\Delta)^{\gamma}}\mathbb{P}[(\phi\cdot \psi)\theta](\tau)\|_{\cX^{0}_{a,\sigma}}\,d\tau\\
\nonumber&=\int_0^t \int_{\mathbb{R}^3}e^{a|\xi|^{\frac{1}{\sigma}}}e^{-(t-\tau)|\xi|^{2\gamma}}|\mathcal{F}\{\mathbb{P}[(\phi\cdot \psi)\theta]\}(\xi,\tau)|\,d\xi d\tau\\
\nonumber&\leq\int_0^t \int_{\mathbb{R}^3}e^{a|\xi|^{\frac{1}{\sigma}}}|\mathcal{F}[(\phi\cdot \psi)\theta](\xi,\tau)|\,d\xi d\tau\\
&=\int_0^t \|[(\phi\cdot \psi)\theta](\tau)\|_{\cX^{0}_{a,\sigma}} d\tau,
\end{align}
for all $t\in[0,T]$. Then, from the inequality  (\ref{lem:produto}), it follows that
\begin{align*} 
	\nonumber \Vert B'(\phi,\psi,\theta) (t)\Vert_{\cX^{0}_{a,\sigma}}&\leq  C\int_0^T \|\phi\|_{\cX^{0}_{a,\sigma}}\| \psi\|_{\cX^{0}_{a,\sigma}}\|\theta\|_{\cX^{0}_{a,\sigma}}d\tau\\
&\leq  CT\|\phi\|_{L^{\infty}_T(\cX^{0}_{a,\sigma})}\|\psi\|_{L^{\infty}_T(\cX^{0}_{a,\sigma})}\|\theta\|_{L^{\infty}_T(\cX^{0}_{a,\sigma})},
\end{align*}
 for all $t\in[0,T]$. Thereby, one reaches
\begin{align}\label{w171} 
	 \Vert B'(\phi,\psi,\theta) \Vert_{L^\infty_T(\cX^{0}_{a,\sigma})} &\leq  CT\|\phi\|_{L^{\infty}_T(\cX^{0}_{a,\sigma})}\|\psi\|_{L^{\infty}_T(\cX^{0}_{a,\sigma})}\|\theta\|_{L^{\infty}_T(\cX^{0}_{a,\sigma})}.
\end{align}
In particular, it holds
\begin{align} \label{eq:A21}
	 \Vert B'(u,u,u) \Vert_{L^{\infty}_T(\cX^{0}_{a,\sigma})} \leq  CT\|u\|_{L^{\infty}_T(\cX^{0}_{a,\sigma})}^3\leq   8C T \|u_0\|_{\cX^{0}_{a,\sigma}}^3,
\end{align}
for all $u\in \mathcal{F}_T$. Similarly, we can write
\begin{align*} 
	\nonumber \Vert B'(\phi,\psi,\theta) \Vert_{L^1_T(\cX^{2\gamma}_{a,\sigma})} 
\nonumber&\leq\int_0^T \int_{\mathbb{R}^3} |\xi|^{2\gamma}e^{a|\xi|^{\frac{1}{\sigma}}}\int_0^te^{-(t-\tau)|\xi|^{2\gamma}}|\mathcal{F}\{\mathbb{P}[(\phi\cdot \psi)\theta]\}(\xi,\tau)|\,d\tau d\xi dt\\
\nonumber&=\int_{\mathbb{R}^3} |\xi|^{2\gamma}e^{a|\xi|^{\frac{1}{\sigma}}}\int_0^T\int_0^te^{-(t-\tau)|\xi|^{2\gamma}}|\mathcal{F}[(\phi\cdot \psi)\theta](\xi,\tau)|\,d\tau  dt d\xi\\
\nonumber&=\int_{\mathbb{R}^3} |\xi|^{2\gamma}e^{a|\xi|^{\frac{1}{\sigma}}}\int_0^T |\mathcal{F}[(\phi\cdot \psi)\theta](\xi,\tau)|\int_\tau^Te^{-(t-\tau)|\xi|^{2\gamma}}\,dt  d\tau d\xi\\
\nonumber&\leq\int_0^T \|(\phi\cdot \psi)\theta(\tau)\|_{\cX^{0}_{a,\sigma}} d\tau.
\end{align*}
By applying (\ref{w181}) and (\ref{w171}), one has
\begin{align} \label{w191}
	\Vert B'(\phi,\psi,\theta) \Vert_{L^1_T(\cX^{2\gamma}_{a,\sigma})}&\leq  CT\|\phi\|_{L^{\infty}_T(\cX^{0}_{a,\sigma})}\|\psi\|_{L^{\infty}_T(\cX^{0}_{a,\sigma})}\|\theta\|_{L^{\infty}_T(\cX^{0}_{a,\sigma})}.
\end{align}
Particularly, we obtain
\begin{align}\label{w201}
	 \Vert B'(u,u,u) \Vert_{L^1_T(\cX^{2\gamma}_{a,\sigma})}\leq CT\|u\|_{L^{\infty}_T(\cX^{0}_{a,\sigma})}^{3}\leq   8C T \|u_0\|_{\cX^{0}_{a,\sigma}}^3,
\end{align}
for all $u\in \mathcal{F}_T$.

%{\color{red}
%To keep the constants compatible with the later thresholds, fix once and for all constants
%$K_2,K_3>0$ which dominate the preceding bilinear and trilinear bounds.  With
%$\vartheta=1-\frac1{2\gamma}$, the estimates can be summarized as
%\begin{align}
%\Vert B(u,v)\Vert_T&\le K_2T^{\vartheta}\Vert u\Vert_T\Vert v\Vert_T,\label{fixedK2}\\
%\Vert B'(f,g,h)\Vert_T&\le K_3T\Vert f\Vert_T\Vert g\Vert_T\Vert h\Vert_T.\label{fixedK3}
%\end{align}
%The constants include all Fourier-normalization, dimensional, and numerical factors, including
%the factor $8$ produced by $\Vert u\Vert_{L^\infty_T\cX^0}\le2\Vert u_0\Vert_{\cX^0}$.
%The same constants are used in every choice of smallness radius and lifespan below.
%Strong continuity of the Duhamel terms in $\cX^0_{a,\sigma}$ follows by dominated convergence
%and the standard decomposition of the difference of two time convolutions; hence $\Psi$ really
%takes values in the continuous component of $\mathcal X_T$.
%}

Thus, combining (\ref{w24}), (\ref{eq:A1}), (\ref{eq:A2}) and (\ref{eq:A21}), we have
\begin{align}\label{w21}
\nonumber	\Vert \Psi[u] \Vert_{L^{\infty}_T(\cX^{0}_{a,\sigma})} &\leq \Vert e^{-t(-\Delta)^{\gamma}} u_0 \Vert_{L^{\infty}_T(\cX^{0}_{a,\sigma})} + \Vert B(u,u) \Vert_{L^{\infty}_T(\cX^{0}_{a,\sigma})} + \alpha\Vert B'(u,u,u) \Vert_{L^{\infty}_T(\cX^{0}_{a,\sigma})}\\
&\leq \Vert u_0 \Vert_{\cX^{0}_{a,\sigma}}+ C_\gamma [T^{1-\frac{1}{2\gamma}} R^{\frac{1}{2\gamma}}\|u_0\|_{\cX^{0}_{a,\sigma}}^{2-\frac{1}{2\gamma}}+\alpha  T \|u_0\|_{\cX^{0}_{a,\sigma}}^3],
\end{align}
for all $u\in \mathcal{F}_T$. Moreover, by (\ref{w24}), (\ref{w16}), (\ref{w20}) and (\ref{w201}), one concludes
\begin{align} \label{w22}
	\nonumber\Vert \Psi[u] \Vert_{L^{1}_T(\cX^{2\gamma}_{a,\sigma})} &\leq \Vert e^{-t(-\Delta)^{\gamma}} u_0 \Vert_{L^{1}_T(\cX^{2\gamma}_{a,\sigma})} + \Vert B(u,u) \Vert_{L^{1}_T(\cX^{2\gamma}_{a,\sigma})} + \alpha\Vert B'(u,u,u) \Vert_{L^{1}_T(\cX^{2\gamma}_{a,\sigma})}\\
&\leq \Vert u_0 \Vert_{\cX^{0}_{a,\sigma}}+C_\gamma [T^{1-\frac{1}{2\gamma}} R^{\frac{1}{2\gamma}}\|u_0\|_{\cX^{0}_{a,\sigma}}^{2-\frac{1}{2\gamma}}+\alpha  T \|u_0\|_{\cX^{0}_{a,\sigma}}^3],
\end{align}
for all $u\in \mathcal{F}_T$. Now, let us prove all the items of Theorem \ref{thm:solucaolocal}.

\bigskip
\noindent \underline{Proof of Theorem \ref{thm:solucaolocal} i)}: Assume that $\gamma=\frac{1}{2}$ and $\alpha=0$.
\\\\
%\bigskip
%\noindent \emph{Case 1}: Consider $\alpha=0$.
%\bigskip
If we choose $R := \frac{1}{8C}$ (with $C = C_{\frac{1}{2}}$ given in (\ref{w21})), (\ref{w21}) becomes
\begin{align} \label{w23}
	\Vert \Psi[u] \Vert_{L^{\infty}_T(\cX^{0}_{a,\sigma})} \leq 2 \Vert u_0 \Vert_{\cX^{0}_{a,\sigma}},\quad\forall u\in\mathcal{F}_T.
\end{align}
Then, by supposing 
\begin{align}\label{dadoinicial}
\Vert u_0 \Vert_{\cX^{0}_{a,\sigma}} < \frac{R}{2}=\frac{1}{16C}=:C',
\end{align}
 it follows, from (\ref{w22}) and our choice for $R$, that
\begin{equation} \label{eq:A6}
	\Vert \Psi[u] \Vert_{L^1_T(\cX^{1}_{a,\sigma})} \leq R.
\end{equation}
Therefore, by (\ref{w23}) and (\ref{eq:A6}), we have $\Psi[u] \in \mathcal{F}_{T}$, for all $u\in \mathcal{F}_T$. This means that $\Psi(\mathcal{F}_{T}) \subseteq \mathcal{F}_{T}$.

Now, we shall prove the inequality (\ref{eq:claim2}). In fact, by using the equality 
$$
	u \cdot \nabla u - v \cdot \nabla v = u \cdot \nabla (u - v) + (u- v) \cdot \nabla v,
$$
one can write
\begin{align}\label{w38}
	B(u,u) - B(v,v) = B(u,u-v) + B(u-v,v).
\end{align}
Consequently, by (\ref{w17}) and (\ref{w19}), one deduces
	\begin{align}\label{w32}
\nonumber		\Vert B(u,u) - B(v,v) \Vert_{T} &\leq \Vert B(u,u-v) \Vert_{T} + \Vert B(u-v,v) \Vert_T\\
\nonumber&\leq C[\|u-v\|_{L^{\infty}_T(\cX^{0}_{a,\sigma})}\|u\|_{L^{1}_T(\cX^{1}_{a,\sigma})}+\|u-v\|_{L^{1}_T(\cX^{1}_{a,\sigma})}\|u\|_{L^{\infty}_T(\cX^{0}_{a,\sigma})}]\\
\nonumber&\quad+ C[\|u-v\|_{L^{\infty}_T(\cX^{0}_{a,\sigma})}\|v\|_{L^{1}_T(\cX^{1}_{a,\sigma})}+\|u-v\|_{L^{1}_T(\cX^{1}_{a,\sigma})}\|v\|_{L^{\infty}_T(\cX^{0}_{a,\sigma})}]\\
		% &\leq 2c \left( \Vert u \Vert_{L^{\infty}_T(\cX^{0}_{a,\sigma})} \Vert u - v \Vert_{L^1_T(\cX^{1}_{a,\sigma})} + \Vert u - v \Vert_{L^{\infty}_T(\cX^{0}_{a,\sigma})} \Vert u \Vert_{L^1_T(\cX^{1}_{a,\sigma})} \right)\\ 
		% &+ 2c \left( \Vert u - v \Vert_{L^{\infty}_T(\cX^{0}_{a,\sigma})} \Vert v \Vert_{L^1_T(\cX^{1}_{a,\sigma})} + \Vert v \Vert_{L^{\infty}_T(\cX^{0}_{a,\sigma})} \Vert u - v \Vert_{L^1_T(\cX^{1}_{a,\sigma})} \right)
		&\leq4C [ \Vert u_0 \Vert_{\cX^{0}_{a,\sigma}} + \tfrac{R}{2}] \Vert u - v \Vert_T,
	\end{align}
for all $u,v\in \mathcal{F}_T$. Therefore, by (\ref{w24}), (\ref{dadoinicial}) and the fact that $R=\frac{1}{8C}$, one infers
\begin{align*}
	\Vert \Psi[u] - \Psi[v] \Vert_{T} &\leq \Vert B(u,u) - B(v,v) \Vert_{T} \leq 4C [ \Vert u_0 \Vert_{\cX^{0}_{a,\sigma}} + \tfrac{R}{2}] \Vert u - v \Vert_T\\
&\leq 4C [\tfrac{R}{2} + \tfrac{R}{2}]\Vert u - v \Vert_T= \frac{1}{2}  \Vert u - v \Vert_T,
\end{align*}
for all $u,v\in \mathcal{F}_T$. This establishes the proof of (\ref{eq:claim2}).

%\subsection{Final steps of the proof}

As a result, by (\ref{eq:claim1}) and (\ref{eq:claim2}), we can apply  Banach Fixed Point Theorem to the application $\Psi$ in order to obtain a unique mild solution $u \in \mathcal{F}_T\subseteq \mathcal{X}_T$ for the Navier-Stokes equations (\ref{NS}) (see (\ref{w24})). Moreover, by noticing that $u \in \mathcal{F}_T$, we conclude that
$$\Vert u(t) \Vert_{\cX^0_{a,\sigma}}\leq \Vert u \Vert_{L^{\infty}_T(\cX^0_{a,\sigma})} \leq 2 \Vert u_0 \Vert_{\cX^{0}_{a,\sigma}},\quad\forall t\in[0,T],$$
for any $T>0$. The fixed points obtained for two horizons $T_1<T_2$ agree on $[0,T_1]$ by
uniqueness on sufficiently short subintervals and continuation over overlapping intervals.
Consequently they define one global solution, rather than unrelated solutions for each $T$.

In addition, if we consider that 
  \begin{align}\label{w27}
  u \in C([0,T^*), \cX^0_{a,\sigma}(\bR^3)) \cap L^1_{\mathrm{loc}}([0,T^*), \cX^1_{a,\sigma}(\bR^3)),
  \end{align}
  with $\|u_0\|_{\cX^0_{a,\sigma}}<C'$, is the  maximal solution for the Navier-Stokes equations (\ref{NS}); then, $T^*=\infty$. In fact, suppose, by absurdity, that $T^*<\infty$. Thus, the arguments proved above imply that there is a unique solution for the Navier-Stokes equations (\ref{NS}) 
  \begin{align*}
  u \in C_{T^*+T}(\cX^0_{a,\sigma}(\bR^3)) \cap L^1_{T^*+T}( \cX^1_{a,\sigma}(\bR^3))
  \end{align*}
  for any $T>0$. However, this contradicts the maximality of $T^*$. Therefore, by (\ref{w27}), one concludes
  \begin{align*}
  u \in C([0,\infty), \cX^0_{a,\sigma}(\bR^3)) \cap L^1([0,\infty), \cX^1_{a,\sigma}(\bR^3)).
  \end{align*}
  
\caixa

\noindent \underline{Proof of Theorem \ref{thm:solucaolocal} ii)}: Assume that $\gamma=\frac{1}{2}$ and $\alpha>0$.
\\\\
%\bigskip
%\noindent \emph{Case 2}: Consider $\alpha>0$.
%\bigskip
Now, choose $R := \frac{3}{40C}$ and
$T:=\bar T=[48\alpha C\Vert u_0\Vert_{\cX^0_{a,\sigma}}^2]^{-1}$ (with $C = C_\frac{1}{2}$
 and $T$ given in (\ref{w21})), to reach, from (\ref{w21}), the following:
\begin{align} \label{w2321}
	\Vert \Psi[u] \Vert_{L^{\infty}_{\bar{T}}(\cX^{0}_{a,\sigma})} \leq 2 \Vert u_0 \Vert_{\cX^{0}_{a,\sigma}},\quad\forall u\in\mathcal{F}_{\bar T}.
\end{align}
Thus, by assuming
\begin{align}\label{dadoinicial2}
\Vert u_0 \Vert_{\cX^{0}_{a,\sigma}} < \frac{R}{3}=\frac{1}{40 C}=:C',
\end{align}
 we obtain, from (\ref{w22}) and our choice for $R$ and $T=\bar T$, that
\begin{equation} \label{eq:A621}
	\Vert \Psi[u] \Vert_{L^1_{\bar{T}}(\cX^{1}_{a,\sigma})} \leq R.
\end{equation}
As a result, by (\ref{w2321}) and (\ref{eq:A621}), one deduces that $\Psi[u] \in \mathcal{F}_{{\bar{T}}}$, for all $u\in \mathcal{F}_{\bar T}$. Therefore, $\Psi(\mathcal{F}_{{\bar{T}}}) \subseteq \mathcal{F}_{{\bar{T}}}$.

Let us show the inequality (\ref{eq:claim2}) in this case as well. In fact, it is true that
$$
	|u|^2u - |v|^2 v = |u|^2 (u - v) + [(u- v) \cdot (u+v) ] v
$$
leads us to infer
$$
	B'(u,u,u) - B'(v,v,v) = B'(u,u,u-v) + B'(u-v,u+v,v).
$$
As a consequence, by (\ref{w171}) and (\ref{w191}), we have
	\begin{align}\label{w33}
\nonumber		\Vert B'(u,u,u) - B'(v,v,v) \Vert_{\bar{T}} &\leq \Vert B'(u,u,u-v) \Vert_{\bar{T}} + \Vert B'(u-v,u+v,v) \Vert_{\bar{T}}\\
\nonumber&\leq C\bar T[\|u\|_{L^{\infty}_{\bar T}(\cX^{0}_{a,\sigma})}^2\|u-v\|_{\bar{T}}+\|u-v\|_{\bar{T}}\|u+v\|_{L^{\infty}_{\bar{T}}(\cX^{0}_{a,\sigma})}\|v\|_{L^{\infty}_{\bar{T}}(\cX^{0}_{a,\sigma})}]\\
		% &\leq 2c \left( \Vert u \Vert_{L^{\infty}_T(\cX^{0}_{a,\sigma})} \Vert u - v \Vert_{L^1_T(\cX^{1}_{a,\sigma})} + \Vert u - v \Vert_{L^{\infty}_T(\cX^{0}_{a,\sigma})} \Vert u \Vert_{L^1_T(\cX^{1}_{a,\sigma})} \right)\\ 
		% &+ 2c \left( \Vert u - v \Vert_{L^{\infty}_T(\cX^{0}_{a,\sigma})} \Vert v \Vert_{L^1_T(\cX^{1}_{a,\sigma})} + \Vert v \Vert_{L^{\infty}_T(\cX^{0}_{a,\sigma})} \Vert u - v \Vert_{L^1_T(\cX^{1}_{a,\sigma})} \right)
		&\leq 12 C {\bar{T}}  \Vert u_0 \Vert_{\cX^{0}_{a,\sigma}}^2\Vert u - v \Vert_{\bar{T}},
	\end{align}
for all $u,v\in \mathcal{F}_{\bar{T}}$. Thereby, by (\ref{w24}), (\ref{w32}), (\ref{w33}), (\ref{dadoinicial2}) and our choice for $R$ and $T=\bar T$, it follows that
\begin{align*}%\label{w34}
	\nonumber\Vert \Psi[u] - \Psi[v] \Vert_{\bar{T}} &\leq\Vert B(u,u) - B(v,v) \Vert_{\bar{T}} +\alpha\Vert B'(u,u,u) - B'(v,v,v) \Vert_{\bar{T}}\\
\nonumber &\leq 4C\bigl( \Vert u_0 \Vert_{\cX^{0}_{a,\sigma}} + \tfrac{R}{2}+3\alpha \bar{T}\Vert u_0 \Vert_{\cX^{0}_{a,\sigma}}^2\bigr) \Vert u - v \Vert_{\bar{T}}\\
\nonumber&\leq 4C\left(\frac{5R}{6}+\frac{1}{16C}\right)\Vert u-v\Vert_{\bar T}=\frac12\Vert u-v\Vert_{\bar T},
\end{align*}
for all $u,v\in \mathcal{F}_{\bar T}$. This establishes the proof of (\ref{eq:claim2}).

%\subsection{Final steps of the proof}

Therefore, by (\ref{eq:claim1}) and (\ref{eq:claim2}), we can apply  Banach Fixed Point Theorem to the application $\Psi$ in order to obtain a unique mild solution $u \in \mathcal{F}_{\bar{T}}\subseteq \mathcal{X}_{\bar{T}}$ for the Navier-Stokes equations (\ref{NS}) (see (\ref{w24})). Moreover, by noticing that $u \in \mathcal{F}_{\bar{T}}$, we conclude that
$$\Vert u(t) \Vert_{\cX^0_{a,\sigma}}\leq \Vert u \Vert_{L^{\infty}_{\bar{T}}(\cX^0_{a,\sigma})} \leq 2 \Vert u_0 \Vert_{\cX^{0}_{a,\sigma}},\quad\forall t\in[0,{\bar{T}}],$$
for some $\bar T = \bar T (\alpha,u_0)>0.$

\caixa

\noindent \underline{Proof of Theorem \ref{thm:solucaolocal} iii)}: Assume that $\gamma>\frac{1}{2}$ and $\alpha\geq0$.
\\\\
%\bigskip
%\noindent \emph{Case 2}: Consider $\alpha>0$.
%\bigskip
First of all, by taking into account (\ref{w35}),  notice that there is  $T_1>0$ such that
\begin{align}\label{w363}
\Vert e^{-t(-\Delta)^\gamma} u_0 \Vert_{L^{1}_T(\cX^{2\gamma}_{a,\sigma})}<\frac{R}{3},\quad\forall T\in[0,T_1],
\end{align}
since $u_0\in \cX^{0}_{a,\sigma} (\mathbb{R}^3)$ (apply dominated convergence theorem).
Put $A=\Vert u_0\Vert_{\cX^0_{a,\sigma}}$,
$\lambda=(2\gamma)^{-1}$ and $\vartheta=1-\lambda>0$. % If $A=0$, the zero solution is immediate.  If $A>0$, 
Fix $R>0$ and choose $0<T:=\bar T\le T_1$ so small that
\begin{align}\label{w37}
C_\gamma\bar T^{\vartheta}R^\lambda A^{2-\lambda}
&\le \min\Big\{\frac{A}{2},\frac{R}{6}\Big\},\nonumber\\
\alpha C_\gamma\bar T A^3&\le \min\Big\{\frac{A}{2},\frac{R}{6}\Big\},\nonumber\\
C_\gamma[\bar T^{\vartheta}\bigl(A+A^{1-\lambda}R^\lambda\bigr)
+\alpha \bar T A^2]&\le\frac12,
\end{align}
where  $C_\gamma$ is given in (\ref{w21}) or (\ref{w223}) or (\ref{w343}). The restrictions containing $\alpha$ are omitted when $\alpha=0$.  Such a time exists because
$\vartheta>0$ and all constants above are fixed before $\bar T$ is chosen.
Then (\ref{w21}) and (\ref{w37}) give
\begin{align} \label{w232}
	\Vert \Psi[u] \Vert_{L^{\infty}_{\bar{T}}(\cX^{0}_{a,\sigma})} \leq 2 \Vert u_0 \Vert_{\cX^{0}_{a,\sigma}},\quad\forall u\in\mathcal{F}_{\bar T}.
\end{align}
%Thus, combining (\ref{w36}), (\ref{eq:A2}) and (\ref{eq:A21}), we have
%\begin{align}\label{w213}
%\nonumber	\Vert \Psi[u] \Vert_{L^{\infty}_T(\cX^{0}_{a,\sigma})} &\leq \Vert e^{-t(-\Delta)^{\frac{1}{2}}} u_0 \Vert_{L^{\infty}_T(\cX^{0}_{a,\sigma})} + \Vert B(u,u) %\Vert_{L^{\infty}_T(\cX^{0}_{a,\sigma})} + \alpha\Vert B'(u,u,u) \Vert_{L^{\infty}_T(\cX^{0}_{a,\sigma})}\\
%&\leq \frac{R}{3}+ C_\gamma [T^{1-\frac{1}{2\gamma}} R^{\frac{1}{2\gamma}}\|u_0\|_{\cX^{0}_{a,\sigma}}^{2-\frac{1}{2\gamma}}+\alpha  T \|u_0\|_{\cX^{0}_{a,\sigma}}^3].
%\end{align}
Furthermore, by (\ref{w363}), (\ref{w20}) and (\ref{w201}), one concludes
\begin{align} \label{w223}
	\nonumber\Vert \Psi[u] \Vert_{L^{1}_T(\cX^{2\gamma}_{a,\sigma})} &\leq \Vert e^{-t(-\Delta)^{\gamma}} u_0 \Vert_{L^{1}_T(\cX^{2\gamma}_{a,\sigma})} + \Vert B(u,u) \Vert_{L^{1}_T(\cX^{2\gamma}_{a,\sigma})} + \alpha\Vert B'(u,u,u) \Vert_{L^{1}_T(\cX^{2\gamma}_{a,\sigma})}\\
&\leq \frac{R}{3}+C_\gamma [T^{1-\frac{1}{2\gamma}} R^{\frac{1}{2\gamma}}\|u_0\|_{\cX^{0}_{a,\sigma}}^{2-\frac{1}{2\gamma}}+\alpha  T \|u_0\|_{\cX^{0}_{a,\sigma}}^3],
\end{align}
for all $T\in [0,T_1]$. Consequently, we obtain, from   (\ref{w37}) and (\ref{w223}), that
\begin{equation} \label{eq:A62}
	\Vert \Psi[u] \Vert_{L^1_{\bar{T}}(\cX^{2\gamma}_{a,\sigma})} \leq R.
\end{equation}
Therefore, by (\ref{w232}) and (\ref{eq:A62}), one obtains $\Psi[u] \in \mathcal{F}_{{\bar{T}}}$, for all $u\in \mathcal{F}_{\bar T}$. Therefore, $\Psi(\mathcal{F}_{{\bar{T}}}) \subseteq \mathcal{F}_{{\bar{T}}}$.

In addition, by using (\ref{w38}), (\ref{w17}) and (\ref{w19}), we reach
	\begin{align}\label{w323}
\nonumber		\Vert B(u,u) - B(v,v) \Vert_{T} &\leq \Vert B(u,u-v) \Vert_{T} + \Vert B(u-v,v) \Vert_T\\
\nonumber&\leq CT^{1-\frac{1}{2\gamma}}\|u-v\|_{L^{\infty}_T(\cX^{0}_{a,\sigma})}\|u\|_{L^{\infty}_T(\cX^{0}_{a,\sigma})}^{1-\frac{1}{2\gamma}}\|u\|_{L^{1}_T(\cX^{2\gamma}_{a,\sigma})}^{\frac{1}{2\gamma}}
\\
\nonumber&\quad+CT^{1-\frac{1}{2\gamma}}\|u\|_{L^{\infty}_T(\cX^{0}_{a,\sigma})}\|u-v\|_{L^{\infty}_T(\cX^{0}_{a,\sigma})}^{1-\frac{1}{2\gamma}}\|u-v\|_{L^{1}_T(\cX^{2\gamma}_{a,\sigma})}^{\frac{1}{2\gamma}}\\
\nonumber&\quad+ CT^{1-\frac{1}{2\gamma}}\|v\|_{L^{\infty}_T(\cX^{0}_{a,\sigma})}\|u-v\|_{L^{\infty}_T(\cX^{0}_{a,\sigma})}^{1-\frac{1}{2\gamma}}\|u-v\|_{L^{1}_T(\cX^{2\gamma}_{a,\sigma})}^{\frac{1}{2\gamma}}
\\
\nonumber&\quad+CT^{1-\frac{1}{2\gamma}}\|u-v\|_{L^{\infty}_T(\cX^{0}_{a,\sigma})}\|v\|_{L^{\infty}_T(\cX^{0}_{a,\sigma})}^{1-\frac{1}{2\gamma}}\|v\|_{L^{1}_T(\cX^{2\gamma}_{a,\sigma})}^{\frac{1}{2\gamma}}\\
				&\leq C_\gamma T^{1-\frac{1}{2\gamma}} [ \Vert u_0 \Vert_{\cX^{0}_{a,\sigma}} + \Vert u_0 \Vert_{\cX^{0}_{a,\sigma}}^{1-\frac{1}{2\gamma}}R^{\frac{1}{2\gamma}}] \Vert u - v \Vert_T,
	\end{align}
for all $u,v\in \mathcal{F}_T$.

Now, we are ready to prove the inequality (\ref{eq:claim2}) in this last case. In fact, 
%$$
%	|u|^2u - |v|^2 v = |u|^2 (u - v) + [(u- v) \cdot (u+v) ] v
%$$
%leads us to infer
%$$
%	B'(u,u,u) - B'(v,v,v) = B'(u,u,u-v) + B'(u-v,u+v,v).
%$$
%As a consequence, by (\ref{w171}) and (\ref{w191}), we have
%	\begin{align}\label{w33}
%\nonumber		\Vert B'(u,u,u) - B'(v,v,v) \Vert_{\bar{T}} &\leq \Vert B'(u,u,u-v) \Vert_{\bar{T}} + \Vert B'(u-v,u+v,v) \Vert_{\bar{T}}\\
%\nonumber&\leq C\bar %T[\|u\|_{L^{\infty}_T(\cX^{0}_{a,\sigma})}^2\|u-v\|_{\bar{T}}+\|u-v\|_{\bar{T}}\|u+v\|_{L^{\infty}_{\bar{T}}(\cX^{0}_{a,\sigma})}\|v\|_{L^{\infty}_{\bar{T}}(\cX^{0}_{a,\sigma})}]\\
		% &\leq 2c \left( \Vert u \Vert_{L^{\infty}_T(\cX^{0}_{a,\sigma})} \Vert u - v \Vert_{L^1_T(\cX^{1}_{a,\sigma})} + \Vert u - v \Vert_{L^{\infty}_T(\cX^{0}_{a,\sigma})} \Vert u %\Vert_{L^1_T(\cX^{1}_{a,\sigma})} \right)\\ 
		% &+ 2c \left( \Vert u - v \Vert_{L^{\infty}_T(\cX^{0}_{a,\sigma})} \Vert v \Vert_{L^1_T(\cX^{1}_{a,\sigma})} + \Vert v \Vert_{L^{\infty}_T(\cX^{0}_{a,\sigma})} \Vert u - v %\Vert_{L^1_T(\cX^{1}_{a,\sigma})} \right)
%		&\leq12C {\bar{T}}  \Vert u_0 \Vert_{\cX^{0}_{a,\sigma}}^2\Vert u - v \Vert_{\bar{T}},
%	\end{align}
%for all $u,v\in \mathcal{F}_{\bar{T}}$. Thereby, 
by (\ref{w33}), (\ref{w323}) and (\ref{w37}), it follows that
\begin{align}\label{w343}
	\nonumber\Vert \Psi[u] - \Psi[v] \Vert_{\bar{T}} &\leq \Vert B(u,u) - B(v,v) \Vert_{\bar{T}} +\alpha\Vert B'(u,u,u) - B'(v,v,v) \Vert_{\bar{T}}\\
\nonumber &\leq C_\gamma\Bigl[\bar T^{\vartheta}\bigl(A+A^{1-\lambda}R^\lambda\bigr)+\alpha \bar T A^2\Bigr]\Vert u-v\Vert_{\bar T}\\
&\leq \frac{1}{2}  \Vert u - v \Vert_{\bar{T}},
\end{align}
for all $u,v\in \mathcal{F}_{\bar T}$. This establishes the proof of (\ref{eq:claim2}).

%\subsection{Final steps of the proof}

Hence, by (\ref{eq:claim1}) and (\ref{eq:claim2}), by applying  Banach Fixed Point Theorem to the application $\Psi$, one infers that there exists a unique mild solution $u \in \mathcal{F}_{\bar{T}}\subseteq \mathcal{X}_{\bar{T}}$ for the Navier-Stokes equations (\ref{NS}) (see (\ref{w24})). Furthermore, by noticing that $u \in \mathcal{F}_{\bar{T}}$, one deduces
$$\Vert u(t) \Vert_{\cX^0_{a,\sigma}}\leq \Vert u \Vert_{L^{\infty}_{\bar{T}}(\cX^0_{a,\sigma})} \leq 2 \Vert u_0 \Vert_{\cX^{0}_{a,\sigma}},\quad\forall t\in[0,{\bar{T}}],$$
for some $\bar T = \bar T (\alpha,\gamma,u_0)>0.$

\caixa

\bigskip
\noindent\textbf{Proof of Theorem \ref{thm:blowup}:}
%Let us present  the proof of  Theorem \ref{thm:blowup}.  
It is important to emphasize once again that we shall adapt and improve the proofs obtained by \cite{patricia,Bnovo,BNS,Robert} (see also references therein).

\bigskip
\noindent \underline{Proof of Theorem \ref{thm:blowup} i)}: 
Assume that $T^*<\infty$ and suppose, by absurdity, that $\displaystyle\limsup_{t \nearrow T^*} \Vert u(t) \Vert_{\cX^{0}_{a,\sigma}} < \infty$.
\\\\
As a result, by using the hypothesis that 
\begin{align*}
  u \in C([0,T^*), \cX^0_{a,\sigma}(\bR^3)) \cap L^1_{\mathrm{loc}}([0,T^*), \cX^{2\gamma}_{a,\sigma}(\bR^3))
\end{align*}
is the maximal solution for the Navier-Stokes equations (\ref{NS}), we can  write 
\begin{align}\label{w41}
	\Vert u(t) \Vert_{\cX^{0}_{a,\sigma}} \leq M_* ,\quad\forall t\in[0,T^*).
\end{align}

Apply a standard Fourier truncation (or Friedrichs approximation) to the mild
solution, perform the following scalar estimate for the smooth approximants, and then pass to
the limit by Fatou's lemma and dominated convergence.  The pressure contribution vanishes
because $\operatorname{div}u=0$.  In this justified sense, we infer
\begin{equation*}
	\frac{1}{2}\partial_t |\hat u|^2 + |\xi|^{2\gamma} |\hat u|^2 \leq |\hat u |\, | \widehat{u \cdot \nabla u}| + \alpha |\hat u |\, | \widehat{|u|^2u}|.
\end{equation*}
On the other hand, for every $\varepsilon > 0$, we have
\begin{align*}
	\partial_t \big[|\hat u(t)|^2 + \varepsilon \big]^{1/2} + \frac{|\xi|^{2\gamma} |\hat u(t)|^2}{[|\hat u(t)|^2 + \varepsilon]^{1/2}}  &\leq \frac{|\hat u(t)| |\widehat{u \cdot \nabla u}(t)|}{[|\hat u(t)|^2 + \varepsilon]^{1/2}} + \frac{\alpha |\hat u(t)||\widehat{|u|^2 u}(t)|}{[|\hat u(t)|^2 + \varepsilon]^{1/2}}\\
&\leq |\widehat{u \cdot \nabla u}(t)| + \alpha |\widehat{|u|^2 u}(t)|.
\end{align*}
Then, by integrating the inequality above over $[0,t]$ (with $t\in[0,T^*)$) and passing to the limit  $\varepsilon \searrow 0$, one concludes
$$
	|\hat{u}(t)| + \int_0^t |\xi|^{2\gamma} |\hat u(\tau)| \,d\tau \leq |\hat{u}_0| + \int_0^t |\widehat{u \cdot \nabla u}(\tau)| \,d\tau + \alpha \int_0^t |\widehat{|u|^2 u}(\tau)| \,d\tau,
$$
for all $t\in [0,T^*)$. Now, by multiplying   the inequality above by $e^{a|\xi|^{\frac{1}{\sigma}}}$ and integrating the result obtained over $\bR^3$, we have
\begin{align*}
\nonumber	\Vert u(t) \Vert_{\cX^{0}_{a,\sigma}} + \int_0^t \Vert u(\tau) \Vert_{\cX^{2\gamma}_{a,\sigma}} \,d\tau &\leq \Vert u_0 \Vert_{\cX^{0}_{a,\sigma}} + \int_0^t \Vert u \cdot \nabla u(\tau) \Vert_{\cX^{0}_{a,\sigma}} \,d\tau + \alpha \int_0^t \Vert |u|^2 u(\tau) \Vert_{\cX^{0}_{a,\sigma}}\,d\tau\\
&\leq \Vert u_0 \Vert_{\cX^{0}_{a,\sigma}} + \int_0^t \Vert u \otimes u(\tau) \Vert_{\cX^{1}_{a,\sigma}} \,d\tau + \alpha \int_0^t \Vert |u|^2 u(\tau) \Vert_{\cX^{0}_{a,\sigma}}\, d\tau,
\end{align*}
for all $t\in [0,T^*)$. Hence, by using  (\ref{lem:produto}), (\ref{lem:split}) and (\ref{w41}), we reach
\begin{align}\label{w50} 
	 \Vert u(t) \Vert_{\cX^{0}_{a,\sigma}} + \int_0^t \Vert u(\tau) \Vert_{\cX^{2\gamma}_{a,\sigma}} \,d\tau &\leq \Vert u_0 \Vert_{\cX^{0}_{a,\sigma}} + C\int_0^t \Vert u(\tau) \Vert_{\cX^{0}_{a,\sigma}} \Vert u(\tau) \Vert_{\cX^{1}_{a,\sigma}} \,d\tau + \alpha C \int_{0}^t \Vert u(\tau) \Vert_{\cX^{0}_{a,\sigma}}^3 \,d\tau\\
\nonumber &\leq \Vert u_0 \Vert_{\cX^{0}_{a,\sigma}} + CM_*^{2-\frac{1}{2\gamma}}\int_0^t  \Vert u(\tau) \Vert_{\cX^{2\gamma}_{a,\sigma}}^{\frac{1}{2\gamma}} \,d\tau + \alpha C M_*^3T^*,
\end{align}
for $\gamma\geq\frac{1}{2}$ and for all $t\in [0,T^*)$. As a result, by Young's inequality, it follows that
\begin{align*} 
	\nonumber \Vert u(t) \Vert_{\cX^{0}_{a,\sigma}} + \frac{1}{2}\int_0^t \Vert u(\tau) \Vert_{\cX^{2\gamma}_{a,\sigma}} \,d\tau
&\leq \Vert u_0 \Vert_{\cX^{0}_{a,\sigma}} + C_\gamma M_*^{\frac{4\gamma-1}{2\gamma-1}}T^* + \alpha C M_*^3T^*=:\frac{M^*}{2}<\infty,
\end{align*}
for $\gamma> \frac{1}{2}$ and for all $t\in [0,T^*)$ (recall that $u_0\in \cX^{0}_{a,\sigma}(\mathbb{R}^3)$). Thereby, we can write the following:
\begin{align} \label{w42}
	 \int_0^t \Vert u(\tau) \Vert_{\cX^{2\gamma}_{a,\sigma}} \,d\tau &\leq M^*, \quad\forall t\in[0,T^*).
\end{align}

On the other hand, since $u$ is a mild solution for the Navier-Stokes equations (\ref{NS}), it is true that
\begin{align*}
	u(t) = e^{-t(-\Delta)^{\gamma}} u_0 - \int_0^t e^{-(t-\tau)(-\Delta)^{\gamma}} \rP(u \cdot \nabla u)(\tau) \,d\tau-\alpha  \int_0^t e^{-(t-\tau)(-\Delta)^{\gamma}} \rP [|u|^2u](\tau) \,d\tau, 
\end{align*}
for all $t\in[0,T^*)$. Thus, we shall show that $(u(t_n))_{n\in \mathbb{N}}$ is a Cauchy sequence, where $(t_n)_{n\in\mathbb{N}} \subseteq [0,T^*)$ is a sequence  that satisfies $t_n \nearrow T^*$, as $n\rightarrow \infty$.  To this end, suppose that $t_n \geq t_k$ (the other case is analogous) to obtain
$$
	u(t_n) - u(t_k) = J_1 + J_2 + J_3 + \alpha J_4 + \alpha J_5,
$$
where
$$
	\begin{aligned}
		J_1 &= \big[e^{-t_n(-\Delta)^{\gamma}} - e^{-t_k(-\Delta)^{\gamma}}\big] u_0,\\
		J_2 &= \int_0^{t_k} \big[e^{-(t_k - \tau)(-\Delta)^{\gamma}} - e^{-(t_n - \tau)(-\Delta)^{\gamma}} \big] \rP[ u \cdot \nabla u](\xi,\tau) \,d\tau,\\
		J_3 &= -\int_{t_k}^{t_n} e^{-(t_n - \tau)(-\Delta)^{\gamma}} \rP [u \cdot \nabla u](\xi,\tau) \,d\tau,\\
		J_4 &=  \int_0^{t_k} \big[ e^{-(t_k - \tau)(-\Delta)^{\gamma}} - e^{-(t_n - \tau)(-\Delta)^{\gamma}} \big] \rP[ |u|^2 u](\xi,\tau) \,d\tau,\\
		J_5 &= -\int_{t_k}^{t_n} e^{-(t_n - \tau)(-\Delta)^{\gamma}} \rP [|u|^2 u](\xi,\tau) \,d\tau.
	\end{aligned}
$$
The next step is to estimate $J_i$ in $\cX^{0}_{a,\sigma}(\bR^3)$, for each $i = 1,\dots,5$. Indeed, for $i = 1$, we have
$$
	\begin{aligned}
		\Vert J_1 \Vert_{\cX^{0}_{a,\sigma}} %&= \int_{\bR^3} e^{a|\xi|^\frac{1}{\sigma}} | \cF[(e^{-t_n(-\Delta)^{\gamma}} - e^{-t_k(-\Delta)^{\gamma}}) u_0](\xi) | \,d\xi\\
		&= \int_{\bR^3} e^{a|\xi|^{\frac{1}{\sigma}}} |e^{-t_n|\xi|^{2\gamma}} - e^{-t_k |\xi|^{2\gamma}}||\widehat{u_0}(\xi)| \,d\xi\\
		&= \int_{\bR^3} e^{a|\xi|^{\frac{1}{\sigma}}} e^{-t_k|\xi|^{2\gamma}}(1 - e^{-(t_n - t_k) |\xi|^{2\gamma}})|\widehat{u_0}(\xi)| \,d\xi\\
		&\leq \int_{\bR^3} e^{a|\xi|^{\frac{1}{\sigma}}} (1 - e^{-(t_n - t_k) |\xi|^{2\gamma}})|\widehat{u_0}(\xi)| \,d\xi\\
		&\leq \int_{\bR^3} e^{a|\xi|^{\frac{1}{\sigma}}} (1 - e^{-(T^* - t_k) |\xi|^{2\gamma}})|\widehat{u_0}(\xi)| \,d\xi .
	\end{aligned}
$$
Thereby, $\lim_{n,k\rightarrow \infty} \|J_1 \|_{\cX^{0}_{a,\sigma}}=0$ by  dominated convergence theorem (since $u_0\in \cX^{0}_{a,\sigma}(\bR^3)$). 

In addition, for $i = 2$, one deduces
	\begin{align}\label{w40}
		\nonumber \Vert J_2 \Vert_{\cX^{0}_{a,\sigma}} &\leq \int_{\bR^3} \int_0^{t_k} e^{a|\xi|^{\frac{1}{\sigma}}} |e^{-(t_k - \tau)|\xi|^{2\gamma}} - e^{-(t_n - \tau)|\xi|^{2\gamma}}| |\cF [\rP(u \cdot \nabla u)](\xi,\tau)| \,d\tau d\xi\\
		\nonumber&\leq \int_{\bR^3} \int_0^{t_k} e^{a|\xi|^{\frac{1}{\sigma}}} |e^{-(t_k - \tau)|\xi|^{2\gamma}} - e^{-(t_n - \tau)|\xi|^{2\gamma}}| |\widehat{u \cdot \nabla u}(\xi,\tau)| \,d\tau d\xi\\
		\nonumber&\leq \int_{\bR^3} \int_0^{t_k} e^{a|\xi|^{\frac{1}{\sigma}}} (1 - e^{-(t_n - t_k)|\xi|^{2\gamma}}) |\widehat{u \cdot \nabla u}(\xi,\tau)| \,d\tau d\xi\\
		&\leq \int_{\bR^3} \int_0^{T^*} e^{a|\xi|^{\frac{1}{\sigma}}} (1 - e^{-(T^* - t_k)|\xi|^{2\gamma}}) |\widehat{u \cdot \nabla u}(\xi,\tau)| \,d\tau d\xi .
	\end{align}
On the other hand,  (\ref{lem:produto}), (\ref{lem:split}), (\ref{w41}), (\ref{w42}) and Hölder's inequality  imply that
\begin{align}\label{w14}
		\nonumber \int_{\bR^3} \int_0^{T^*} e^{a|\xi|^{\frac{1}{\sigma}}}  |\widehat{u \cdot \nabla u}(\xi,\tau)| \,d\tau d\xi
\nonumber &\leq  \int_0^{T^*} \|(u \otimes u)(\tau)\|_{\mathcal{X}_{a,\sigma}^1} \, d\tau \\
\nonumber &\leq C \int_0^{T^*} \|u (\tau)\|_{\mathcal{X}_{a,\sigma}^0}^{2-\frac{1}{2\gamma}} \|u (\tau)\|_{\mathcal{X}_{a,\sigma}^{2\gamma}}^{\frac{1}{2\gamma}} \, d\tau \\
&\leq CM_*^{2-\frac{1}{2\gamma}}[M^*]^{\frac{1}{2\gamma}}[T^*]^{1-\frac{1}{2\gamma}}<\infty,
 \end{align}
 for $\gamma>\frac{1}{2}$. As a consequence of (\ref{w40}) and (\ref{w14}), one infers $\lim_{n,k\rightarrow \infty}\Vert J_2 \Vert_{\cX^{0}_{a,\sigma}}=0$ (it is enough to apply dominated convergence theorem).
 
For $i =3$, by (\ref{lem:produto}),  (\ref{lem:split}), (\ref{w41}), (\ref{w42}) and Hölder's inequality,  we can write the following arguments:
$$
	\begin{aligned}
		\Vert J_3 \Vert_{\cX^{0}_{a,\sigma}} &\leq \int_{\bR^3} \int_{t_k}^{t_n} e^{a|\xi|^{\frac{1}{\sigma}}}e^{-(t_n - \tau)|\xi|^{2\gamma}}|\widehat{u \cdot \nabla u}(\xi,\tau)| \,d\tau d\xi\\
		&\leq \int_{\bR^3} \int_{t_k}^{t_n} e^{a|\xi|^{\frac{1}{\sigma}}}|\widehat{u \cdot \nabla u}(\xi,\tau)| \,d\tau d\xi\\
		&\leq \int_{t_k}^{T^*} \Vert u(\tau) \Vert_{\cX^{0}_{a,\sigma}} \Vert u(\tau) \Vert_{\cX^{1}_{a,\sigma}} \,d\tau\\
&\leq M_*^{2-\frac{1}{2\gamma}} \int_{t_k}^{T^*} \Vert u(\tau) \Vert_{\cX^{2\gamma}_{a,\sigma}}^{\frac{1}{2\gamma}} \,d\tau\\
&\leq M_*^{2-\frac{1}{2\gamma}} [M^{*}]^{\frac{1}{2\gamma}}(T^*-t_k)^{1-\frac{1}{2\gamma}},
	\end{aligned}
$$
for $\gamma>\frac{1}{2}$. Therefore, $\lim_{n,k\rightarrow \infty}\Vert J_3 \Vert_{\cX^{0}_{a,\sigma}}=0$.

Now, let us study the case $i=4$. Thus, we obtain
	\begin{align}\label{w44}
		\nonumber\Vert J_4 \Vert_{\cX^{0}_{a,\sigma}} &\leq \int_{\bR^3} \int_0^{t_k} e^{a|\xi|^{\frac{1}{\sigma}}} |e^{-(t_k - \tau)|\xi|^{2\gamma}} - e^{-(t_n - \tau)|\xi|^{2\gamma}}| |\cF [\rP(|u|^2 u)](\xi,\tau)| \,d\tau d\xi\\
		\nonumber&\leq \int_{\bR^3} \int_0^{t_k} e^{a|\xi|^{\frac{1}{\sigma}}} |e^{-(t_k - \tau)|\xi|^{2\gamma}} - e^{-(t_n - \tau)|\xi|^{2\gamma}}| |\widehat{|u|^2 u}(\xi,\tau)| \,d\tau d\xi\\
		\nonumber&\leq \int_{\bR^3} \int_0^{t_k} e^{a|\xi|^{\frac{1}{\sigma}}} (1 - e^{-(t_n - t_k)|\xi|^{2\gamma}}) |\widehat{|u|^2 u}(\xi,\tau)| \,d\tau d\xi\\
		&\leq \int_{\bR^3} \int_0^{T^*} e^{a|\xi|^{\frac{1}{\sigma}}} (1 - e^{-(T^* - t_k)|\xi|^{2\gamma}}) |\widehat{|u|^2 u}(\xi,\tau)| \,d\tau d\xi.
	\end{align}
On the other hand,  (\ref{lem:produto}) and (\ref{w41})  imply that
\begin{align}\label{w45}
		\nonumber \int_{\bR^3} \int_0^{T^*} e^{a|\xi|^{\frac{1}{\sigma}}}  |\widehat{|u|^2 u}(\xi,\tau)| \,d\tau d\xi
&=  \int_0^{T^*} \|[|u|^2 u](\tau)\|_{\mathcal{X}_{a,\sigma}^0} \, d\tau\\ 
&\leq C \int_0^{T^*} \|u (\tau)\|_{\mathcal{X}_{a,\sigma}^0}^{3} \, d\tau \leq CM_*^{3}T^*<\infty.
 \end{align}
 As a consequence of (\ref{w44}) and (\ref{w45}), one infers $\lim_{n,k\rightarrow \infty}\Vert J_4 \Vert_{\cX^{0}_{a,\sigma}}=0$ (it is enough to apply  dominated convergence theorem).

The last case $i = 5$ can be analysed as  follows: 
$$
	\begin{aligned}
		\Vert J_5 \Vert_{\cX^{0}_{a,\sigma}} &\leq \int_{\bR^3} \int_{t_k}^{t_n} e^{a|\xi|^{\frac{1}{\sigma}}}e^{-(t_n - \tau)|\xi|^{2\gamma}}|\widehat{|u|^2 u}(\xi,\tau)| \,d\tau d\xi\\
		&\leq \int_{\bR^3} \int_{t_k}^{t_n} e^{a|\xi|^{\frac{1}{\sigma}}}|\widehat{|u|^2 u}(\xi,\tau)| \,d\tau d\xi\\
		&\leq \int_{t_k}^{t_n} \Vert [|u|^2 u](\tau) \Vert_{\cX^{0}_{a,\sigma}} \,d\tau\\ 
		&\leq C\int_{t_k}^{T^*} \Vert u(\tau) \Vert_{\cX^{0}_{a,\sigma}}^3\,d\tau\\
& \leq C M_*^3 (T^*-t_k),
	\end{aligned}
$$
it is enough to apply the inequalities (\ref{lem:produto}) and (\ref{w41}). Hence,  $\lim_{n,k\rightarrow \infty}\Vert J_5 \Vert_{\cX^{0}_{a,\sigma}}=0$.

By the limits obtained above, we conclude that $(u(t_n))_{n\in\mathbb{N}}$ is a Cauchy sequence in $\cX^{0}_{a,\sigma}(\bR^3)$, which is a Banach space. Hence, there exists $u^* \in \cX^{0}_{a,\sigma}(\bR^3)$ such that
\begin{align}\label{w47}
	\lim_{n\rightarrow \infty}\Vert u(t_n) - u^* \Vert_{\cX^{0}_{a,\sigma}}=0.
\end{align}
%The preceding estimates are uniform for every pair $s,t$ sufficiently close to $T^*$; they
%do not depend on a particular  sequence.  Hence $u(t)$ is a Cauchy family in
%$\cX^0_{a,\sigma}(\mathbb{R}^3)$ as $t\nearrow T^*$ and has a unique limit $u^*$.  This directly proves
%that the limit is sequence-independent.  (The former interlacing argument was invalid because
%the interlaced sequence need not be increasing.)  Since divergence is preserved in
%$\cX^0_{a,\sigma}$, $\operatorname{div}u^*=0$.
We now need to show that the limit $u^*$ does not depend on the sequence $(t_n)_{n\in\mathbb{N}}$.
To this end, let $(s_n)_{n\in\mathbb{N}} \subseteq [0,T^*)$ such that $s_n \nearrow T^*$, as $n\rightarrow\infty$. By the process established above, we obtain $u^{**} \in \cX^{0}_{a,\sigma}(\bR^3)$ satisfying
\begin{align}\label{w48}
	\lim_{n\rightarrow\infty}\Vert u(s_n) - u^{**} \Vert_{\cX^{0}_{a,\sigma}} = 0.
\end{align}
Thereby, define the sequence $(r_n)_{n\in\mathbb{N}}\subseteq[0,T^*)$ by
$		r_{2n} := t_n $ and $		r_{2n - 1} := s_n$, for all $n\in \mathbb{N}$. Then, $r_n \nearrow T^*$, as $n\rightarrow\infty$, and, consequently, by the arguments described above, there is $u_* \in \cX^{0}_{a,\sigma}(\bR^3)$ such that
\begin{align}\label{w49}
	\lim_{n\rightarrow \infty}\Vert u(r_n) - u_* \Vert_{\cX^{0}_{a,\sigma}} = 0.
\end{align}
However, by (\ref{w47}), (\ref{w48}) and (\ref{w49}), notice that
\[
	u^* = \lim_{n\rightarrow\infty} u(t_n) = \lim_{n\rightarrow\infty} u(r_{2n}) = \lim_{n\rightarrow\infty} u(r_n) = u_*
\]
and also
\[
	u_* = \lim_{n\rightarrow\infty} u(r_n)  = \lim_{n\rightarrow\infty} u(r_{2n - 1})= \lim_{n\rightarrow\infty} u(s_n)  = u^{**},
\]
in $\cX^{0}_{a,\sigma}(\bR^3)$. Therefore, $u^* = u^{**}$. This means that $u^*$ does not rely on the sequence $(t_n)_{n\in\mathbb{N}}$.

Now, consider the following system of equations:
\begin{equation}\label{NSu*}	\tag{NS$u^*$}
\left\{
\begin{array}{l}
v_t
\;\!+\,
(-\Delta)^{\gamma}\,v
\,+\,
v \cdot \nabla v
\,+\, \alpha|v|^2v
\,+\,
\nabla \;\!q \:\!
\;=\;
0, \quad x\in \mathbb{R}^3, t>0;\\
%
%\mbox{} \vspace{-0.300cm} \\
%
%
%\mbox{} \vspace{-0.300cm} \\
%
%\theta_t
%\;\!\,+\,
%(-\Delta)^{\beta}\,\theta
%\,\,+\,
%u \cdot \nabla \theta
%\;=\;
%0, \quad x\in \mathbb{R}^3, t> 0,\\
%
%\mbox{} \vspace{-0.300cm} \\
%
\mbox{div}\:v  \;=\; 0, \quad x\in \mathbb{R}^3, t>0;\\
%
%\mbox{} \vspace{-0.300cm} \\
%
v(x,0) \,=\, u^*(x), \quad x\in \mathbb{R}^3.
\end{array}
\right.
\end{equation}
By Theorem \ref{thm:solucaolocal} iii), there exist $\bar T > 0$ and a unique local mild solution $v \in C_{\bar T}(\cX^{0}_{a,\sigma}(\bR^3)) \cap L^1_{\bar T}(\cX^{2\gamma}_{a,\sigma}(\bR^3))$ for the equations (\ref{NSu*}).
Then, by the limit (\ref{w47}), the function $w\in C_{\bar T+T^*}(\cX^{0}_{a,\sigma}(\bR^3))$ defined  by
$$
	w(t) =\left\{
         \begin{array}{ll}
           u(t), & \hbox{if }  t \in [0,T^*);\\
           v(t-T^*), & \hbox{if } t \in [T^*, T^* + \bar T],
         \end{array}
       \right.
$$
is a mild solution for the Navier--Stokes equations (\ref{NS}).  Indeed, passing to
the limit, as $t\nearrow T^*$, in the original mild formula gives
$$
u^*=e^{-T^*(-\Delta)^\gamma}u_0-
\int_0^{T^*}e^{-(T^*-\tau)(-\Delta)^\gamma}
\mathbb P\bigl(u\cdot\nabla u+\alpha|u|^2u\bigr)(\tau)\,d\tau.
$$
The semigroup identity combines this formula with the Duhamel formula for $v$ and yields the
mild formula for $w$ on $[0,T^*+\bar T]$.  This contradicts the maximality of $T^*$.
Therefore, we have proved that
$$
	\limsup_{t \nearrow T^*} \Vert u(t) \Vert_{\cX^{0}_{a,\sigma}} = \infty.
$$

\caixa

\bigskip
\noindent \underline{Proof of Theorem \ref{thm:blowup} ii)}: 
Consider that $T^*<\infty$. 
\\\\
Then, analogously to (\ref{w50}) and by (\ref{lem:split}), we can write
\begin{align} \label{w51}
	\nonumber\Vert u(s) \Vert_{\cX^{0}_{a,\sigma}} &+ \int_t^s \Vert u(\tau) \Vert_{\cX^{2\gamma}_{a,\sigma}} \,d\tau \leq \Vert u(t) \Vert_{\cX^{0}_{a,\sigma}} + C\int_t^s \Vert u(\tau) \Vert_{\cX^{0}_{a,\sigma}} \Vert u(\tau) \Vert_{\cX^{1}_{a,\sigma}} \,d\tau + \alpha C \int_{t}^s \Vert u(\tau) \Vert_{\cX^{0}_{a,\sigma}}^3 \,d\tau\\
&\quad\leq \Vert u(t) \Vert_{\cX^{0}_{a,\sigma}} + C\int_t^s  \Vert u(\tau) \Vert_{\cX^{0}_{a,\sigma}}^{2-\frac{1}{2\gamma}}\Vert u(\tau) \Vert_{\cX^{2\gamma}_{a,\sigma}}^{\frac{1}{2\gamma}} \,d\tau +\alpha C \int_{t}^s \Vert u(\tau) \Vert_{\cX^{0}_{a,\sigma}}^3 \,d\tau,
\end{align}
for $\gamma >\frac{1}{2}$, $\alpha\geq0$ and for all $0\leq t\leq s <T^*$. By Young's inequality, we obtain
	\begin{align*}
		\Vert u(s) \Vert_{\cX^{0}_{a,\sigma}} + \frac{1}{2} \int_t^s \Vert u(\tau) \Vert_{\cX^{2\gamma}_{a,\sigma}} \,d\tau 
		&\leq \Vert u(t) \Vert_{\cX^{0}_{a,\sigma}} + C_\gamma\int_t^s  \Vert u(\tau) \Vert_{\cX^{0}_{a,\sigma}}^{\frac{4\gamma-1}{2\gamma-1}} \,d\tau +\alpha C \int_{t}^s \Vert u(\tau) \Vert_{\cX^{0}_{a,\sigma}}^3 \,d\tau\\
		&\leq \Vert u(t) \Vert_{\cX^{0}_{a,\sigma}} + C_\gamma(\alpha + 1) \int_t^s [\Vert u(\tau) \Vert_{\cX^{0}_{a,\sigma}}^{\frac{4\gamma -1}{2\gamma -1}} + \Vert u(\tau) \Vert_{\cX^{0}_{a,\sigma}}^3] \,d\tau,
	\end{align*}
for $\gamma >\frac{1}{2}$, $\alpha\geq0$ and for all $0\leq t\leq s <T^*$. Therefore, by Grönwall's inequality, one deduces
\begin{equation} \label{eq:B2}
	\Vert u(s) \Vert_{\cX^{0}_{a,\sigma}}  \leq \Vert u(t) \Vert_{\cX^{0}_{a,\sigma}} \exp \Big\{ C_\gamma(\alpha + 1) \int_t^s [\Vert u(\tau) \Vert_{\cX^{0}_{a,\sigma}}^{\frac{2\gamma}{2\gamma -1}} + \Vert u(\tau) \Vert_{\cX^{0}_{a,\sigma}}^2] \,d\tau\Big\},
\end{equation}
for all $0\leq t\leq s <T^*$. By passing to the limit superior, as $s \nearrow T^*$, and applying Theorem \ref{thm:blowup} i), it follows that
\begin{equation*} 
	\int_t^{T^*}[ \Vert u(\tau) \Vert_{\cX^{0}_{a,\sigma}}^{\frac{2\gamma}{2\gamma - 1}} + \Vert u(\tau) \Vert_{\cX^{0}_{a,\sigma}}^2] \,d\tau = \infty,\quad\forall t\in[0,T^*).
\end{equation*}

\caixa

\bigskip
\noindent \underline{Proof of Theorem \ref{thm:blowup} iii)}: 
Consider that $T^*<\infty$. 
\\\\
Thereby, notice that, by using (\ref{eq:B2}), we can write the following inequality:
$$
	\Vert u(s) \Vert_{\cX^{0}_{a,\sigma}}^{\frac{2\gamma}{2\gamma - 1}} + \Vert u(s) \Vert_{\cX^{0}_{a,\sigma}}^2 \leq [\Vert u(t) \Vert_{\cX^{0}_{a,\sigma}}^{\frac{2\gamma}{2\gamma -1}} + \Vert u(t) \Vert_{\cX^{0}_{a,\sigma}}^2 ] \exp \Big\{ C_\gamma(\alpha + 1) \int_t^s [\Vert u(\tau) \Vert_{\cX^{0}_{a,\sigma}}^{\frac{2\gamma}{2\gamma -1}} + \Vert u(\tau) \Vert_{\cX^{0}_{a,\sigma}}^2 ] \,d\tau\Big\},
$$
for $\gamma > \frac{1}{2}$ and for all $0\leq t\leq s <T^*$. This inequality above is equivalent to
$$
	\frac{d}{ds} \Big[ -[C_\gamma(\alpha + 1)]^{-1} \exp \Big\{ -C_\gamma(\alpha + 1) \int_t^s[ \Vert u(\tau) \Vert_{\cX^{0}_{a,\sigma}}^{\frac{2\gamma}{2\gamma - 1}} +  \Vert u(\tau) \Vert_{\cX^{0}_{a,\sigma}}^2] \,d\tau \Big\}\Big] \leq \Vert u(t) \Vert_{\cX^{0}_{a,\sigma}}^{\frac{2\gamma}{2\gamma - 1}} + \Vert u(t) \Vert_{\cX^{0}_{a,\sigma}}^2,
$$
for all $0\leq t\leq s <T^*$. Hence, by integrating the inequality above over $[t,t_0]$ (with $0 \leq t \leq t_0 < T^*$), we reach
$$
	\begin{aligned}
		&-[C_\gamma (\alpha + 1)]^{-1} \exp \Big\{ -C_\gamma (\alpha + 1) \int_t^{t_0} [\Vert u(\tau) \Vert_{\cX^{0}_{a,\sigma}}^{\frac{2\gamma}{2\gamma - 1}} +  \Vert u(\tau) \Vert_{\cX^{0}_{a,\sigma}}^2 ]\,d\tau\Big\}
		+ [C_\gamma (\alpha + 1)]^{-1}\\
& \leq [\Vert u(t) \Vert_{\cX^{0}_{a,\sigma}}^{\frac{2\gamma}{2\gamma - 1}} +  \Vert u(t) \Vert_{\cX^{0}_{a,\sigma}}^2](t_0 - t),
	\end{aligned}
$$
for all $0\leq t\leq t_0 <T^*$. Therefore, by passing the limit, as $t_0 \nearrow T^*$, and applying Theorem \ref{thm:blowup} ii),  it follows that
$$
	\frac{[C_\gamma(\alpha + 1)]^{-1}}{T^* - t} \leq \Vert u(t) \Vert_{\cX^{0}_{a,\sigma}}^{\frac{2\gamma}{2\gamma - 1}} + \Vert u(t) \Vert_{\cX^{0}_{a,\sigma}}^2,\quad\forall t\in[0,T^*).
$$

\caixa

\noindent\textbf{Proof of Corollary \ref{corollaryB1}:} This result follows from Theorems \ref{thm:solucaolocal} iii) and \ref{thm:blowup} and the arguments established   by P. L. Guidolin, W. G. Melo and T. S. R. Santos \cite{patricia} for the Boussinesq equations.
The loss-of-radius step needed here is as follows.  Set
$U_r(t)=\Vert u(t)\Vert_{\mathcal X^0_{r,\sigma}}$,
$D_a(t)=\Vert u(t)\Vert_{\mathcal X^{2\gamma}_{a,\sigma}}$,
$\lambda=(2\gamma)^{-1}$ and $q=(1-\lambda)^{-1}=2\gamma/(2\gamma-1)$.
The product estimate (\ref{lem:produto}), whose constant is uniform in the radius, gives
$$
\Vert u\otimes u\Vert_{\mathcal X^1_{a,\sigma}}
\le C U_{a/\sigma}\Vert u\Vert_{\mathcal X^1_{a,\sigma}},
$$
and interpolation, followed by Young's inequality, yields
$$
C U_{a/\sigma}U_a^{1-\lambda}D_a^\lambda
\le\frac12D_a+C_\gamma U_{a/\sigma}^{q}U_a.
$$
Applying the same product estimate (\ref{lem:produto}) twice  also gives
$\Vert |u|^2u\Vert_{\mathcal X^0_{a,\sigma}}\le C U_{a/\sigma}^2U_a$.
Therefore, one concludes
$$
U_a(s)\le U_a(t)+C_\gamma(1+\alpha)\int_t^s
\bigl(U_{a/\sigma}^q+U_{a/\sigma}^2\bigr)U_a\,d\tau,
$$
and Gronwall's lemma gives the following inequality without any implication from a stronger
norm to a weaker one being assumed.
\begin{align}\label{n1}
\|u(s)\|_{\mathcal{X}_{a,\sigma}^{0}}&\leq 
 \|u(t)\|_{\mathcal{X}_{a,\sigma}^{0}}\exp \{C_{\gamma}(\alpha+1)\int_t^s [\|u(\tau)\|_{\mathcal{X}_{\frac{a}{\sigma},\sigma}^{0}}^{\frac{2\gamma}{2\gamma-1}}
+\|u(\tau)\|_{\mathcal{X}_{\frac{a}{\sigma},\sigma}^{0}}^{2}]\,d\tau \}, \quad\forall 0\leq t\leq s< T^*,
\end{align}
where $a\geq0$, $\sigma\geq1$, $\alpha\geq0$ and $\gamma>\frac{1}{2}$. Take the limit superior in (\ref{n1}), as $s\nearrow T^*$, in order to apply Theorem \ref{thm:blowup}  i) to conclude
\begin{align}\label{n2}
\int_t^{T^*} [\|u(\tau)\|_{\mathcal{X}_{\frac{a}{\sigma},\sigma}^{0}}^{\frac{2\gamma}{2\gamma-1}}
+\|u(\tau)\|_{\mathcal{X}_{\frac{a}{\sigma},\sigma}^{0}}^{2}]\,d\tau=\infty, \quad \forall t\in[0,T^*),
\end{align}
 Consequently, the proof of  Corollary  \ref{corollaryB1} i), with $n=1$, is complete.

By adapting (\ref{n1}), we can write the following inequality:
\begin{align*}
\|u(s)\|_{\mathcal{X}_{\frac{a}{\sigma},\sigma}^{0}}&\leq 
 \|u(t)\|_{\mathcal{X}_{\frac{a}{\sigma},\sigma}^{0}}\exp \{C_{\gamma}(\alpha+1)\int_t^s [\|u(\tau)\|_{\mathcal{X}_{\frac{a}{\sigma},\sigma}^{0}}^{\frac{2\gamma}{2\gamma-1}}
+\|u(\tau)\|_{\mathcal{X}_{\frac{a}{\sigma},\sigma}^{0}}^{2}]\,d\tau \},\quad\forall 0\leq t\leq s< T^*,
\end{align*}
where $a\geq0$, $\sigma\geq1$, $\alpha\geq0$ and $\gamma>\frac{1}{2}$.  As an immediate result, we have
\begin{align*}
\|u(s)\|_{\mathcal{X}_{\frac{a}{\sigma},\sigma}^{0}}^{\frac{2\gamma}{2\gamma-1}}+\|u(s)\|_{\mathcal{X}_{\frac{a}{\sigma},\sigma}^{0}}^{2}&\leq [\|u(t)\|_{\mathcal{X}_{\frac{a}{\sigma},\sigma}^{0}}^{\frac{2\gamma}{2\gamma-1}}+\|u(t)\|_{\mathcal{X}_{\frac{a}{\sigma},\sigma}^{0}}^{2}]\\
&\quad\times \exp\Big\{C_{\gamma}(\alpha+1)\int_{t}^{s}[\|u(\tau)\|_{\mathcal{X}_{\frac{a}{\sigma},\sigma}^{0}}^{\frac{2\gamma}{2\gamma-1}}
+\|u(\tau)\|_{\mathcal{X}_{\frac{a}{\sigma},\sigma}^{0}}^{2}]d\tau\Big\},
\end{align*}
for $\gamma >\frac{1}{2}$ and for all $0\leq t\leq s<T^*$.
After raising the preceding estimate to the powers
$q=\frac{2\gamma}{2\gamma-1}$ and $2$, replace the exponent constant by the fixed value
$K_\gamma=\max\{q,2\}C_\gamma$; below this enlarged constant is again denoted by
$C_\gamma$. Equivalently, it is true that
\begin{align*}
&\frac{d}{ds}\Big\{-[C_{\gamma}(\alpha+1)]^{-1}\exp\Big\{-C_{\gamma}(\alpha+1)\int_t^s[\|u(\tau)\|_{\mathcal{X}_{\frac{a}{\sigma},\sigma}^{0}}^{\frac{2\gamma}{2\gamma-1}}
+\|u(\tau)\|_{\mathcal{X}_{\frac{a}{\sigma},\sigma}^{0}}^{2}]d\tau\Big\}\Big]\leq \|u(t)\|_{\mathcal{X}_{\frac{a}{\sigma},\sigma}^{0}}^{\frac{2\gamma}{2\gamma-1}}
+\|u(t)\|_{\mathcal{X}_{\frac{a}{\sigma},\sigma}^{0}}^{2},
\end{align*}
for all $0\leq t\leq s<T^*$. By integrating  over $[t,t_0]$ the inequality above ($0 \leq t \leq t_0 < T^*$), we infer
\begin{align*}
&-[C_{\gamma}(\alpha+1)]^{-1}\exp\Big\{-C_{\gamma}(\alpha+1)\int_t^{t_0}[\|u(\tau)\|_{\mathcal{X}_{\frac{a}{\sigma},\sigma}^{0}}^{\frac{2\gamma}{2\gamma-1}}
+\|u(\tau)\|_{\mathcal{X}_{\frac{a}{\sigma},\sigma}^{0}}^{2}]d\tau\Big\}+[C_{\gamma}(\alpha+1)]^{-1}\\
&\leq 
[\|u(t)\|_{\mathcal{X}_{\frac{a}{\sigma},\sigma}^{0}}^{\frac{2\gamma}{2\gamma-1}}
+\|u(t)\|_{\mathcal{X}_{\frac{a}{\sigma},\sigma}^{0}}^{2}](t_0-t),
\end{align*}
for all $0\leq t\leq s<T^*$. By passing the limit in this last inequality, as $t_0\nearrow T^*$, and applying Corollary \ref{corollaryB1} i), with $n=1$, we deduce
\begin{align}\label{n3}
\|u(t)\|_{\mathcal{X}_{\frac{a}{\sigma},\sigma}^{0}}^{\frac{2\gamma}{2\gamma-1}}
+\|u(t)\|_{\mathcal{X}_{\frac{a}{\sigma},\sigma}^{0}}^{2}\geq \frac{[C_{\gamma}(\alpha+1)]^{-1}}{T^*-t}, \quad \forall t\in[0,T^*),
\end{align}
 Hence, the proof of Corollary  \ref{corollaryB1} ii), with $n=3$, is given.

From now on, for each radius $\omega\ge0$, let $T_\omega^*$ denote the maximal lifespan in the complete
time--frequency space associated with $\mathcal X^0_{\omega,\sigma}(\mathbb{R}^3)$.  If $0\le b<a$, the
embedding $\mathcal X^0_{a,\sigma}(\mathbb{R}^3)\subset\mathcal X^0_{b,\sigma}(\mathbb{R}^3)$ and local uniqueness imply
that the two maximal solutions agree on their common interval and that $T_b^*\ge T_a^*$.
Whenever the lower bound proved below gives
$\limsup_{t\nearrow T_a^*}U_b(t)=\infty$, the strict inequality $T_b^*>T_a^*$ would contradict
continuity of the radius-$b$ solution at $T_a^*$.  Hence $T_b^*\le T_a^*$ and the two maximal
times are equal.  This compatibility argument, rather than inclusion alone, justifies the
iteration.  The cases $a=0$ and $\sigma=1$ are immediate because the relevant radii coincide.

It is easy to check that $\displaystyle\|\cdot\|_{\mathcal{X}_{\frac{a}{(\sqrt{\sigma})^n},\sigma}^0}\leq \|\cdot\|_{\mathcal{X}_{a,\sigma}^0}$, for all $n\in \mathbb{N}$ (since $a\geq0$ and $\sigma\geq1$). Therefore, one concludes that $u\in C([0,T_{a}^*),$ $\mathcal{X}_{\frac{a}{(\sqrt{\sigma})^n},\sigma}^0(\mathbb{R}^3))$ (by hypothesis, $u\in C([0,T_{a}^*),$ $\mathcal{X}_{a,\sigma}^0(\mathbb{R}^3))$) and this leads us to obtain
\begin{align}\label{wilber10}
T_{\frac{a}{(\sqrt{\sigma})^n}}^*\geq T_a^*,\quad\forall n\in\mathbb{N}.
\end{align}
In addition,  (\ref{n3}) implies that
\begin{align*}
\nonumber \frac{[C_\gamma(\alpha+1)]^{-1}}{T^{*}_a-t}&\leq  \| u(t)\|_{\mathcal{X}^0_{\frac{a}{\sigma},\sigma}}^{\frac{2\gamma}{2\gamma-1}}+\| u(t)\|_{\mathcal{X}^0_{\frac{a}{\sigma},\sigma}}^{2}
\leq \|u(t)\|_{\mathcal{X}_{\frac{a}{\sqrt{\sigma}},\sigma}^0}^{\frac{2\gamma}{2\gamma-1}}+\|u(t)\|_{\mathcal{X}_{\frac{a}{\sqrt{\sigma}},\sigma}^0}^{2},
\end{align*}
for all $t\in [0,T_a^*)$ (for $a\geq0$ and $\sigma\geq1$). As a consequence, one can write
\begin{align}\label{i)n=1}
 \|u(t)\|_{\mathcal{X}_{\frac{a}{\sqrt{\sigma}},\sigma}^0}^{\frac{2\gamma}{2\gamma-1}}+\|u(t)\|_{\mathcal{X}_{\frac{a}{\sqrt{\sigma}},\sigma}^0}^{2}\geq \frac{[C_{\gamma}(\alpha+1)]^{-1}}{T^{*}_a-t},\quad \forall t\in [0,T_a^*).
\end{align}
This means that Corollary \ref{corollaryB1} ii), with $n=2$ (the case $n=1$ is given in Theorem \ref{thm:blowup} iii)), has been proved.

By the use of (\ref{i)n=1}), we have
\begin{align}\label{esqueci2}
\limsup_{t\nearrow T_a^*}\|u(t)\|_{\mathcal{X}_{\frac{a}{\sqrt{\sigma}},\sigma}^0}=\infty,
\end{align}
where $a\geq0$, $\sigma\geq1$, $\alpha\geq0$ and $\gamma>\frac{1}{2}$. This limit is exactly  Corollary \ref{corollaryB1} iii), with $n=2$ (the case $n=1$ has been established in Theorem \ref{thm:blowup} i)) and proves that
\begin{align}\label{wilber11a}
T_a^*\geq T_{\frac{a}{\sqrt{\sigma}}}^*.
\end{align}
 Observing (\ref{wilber10}) and (\ref{wilber11a}), one concludes
\begin{align}\label{wilber11}
T_a^*= T_{\frac{a}{\sqrt{\sigma}}}^*.
\end{align}

On the other hand, by replacing $a$ by $\frac{a}{\sqrt{\sigma}}$ in the proof of Corollary \ref{corollaryB1} i), with $n=1$, and applying (\ref{esqueci2}), one obtains the verification of Corollary \ref{corollaryB1} i), with $n=2$, similarly to (\ref{n2}).

 As a consequence, by using this Corollary \ref{corollaryB1} i), with $n=2$,  we can prove Corollary \ref{corollaryB1} ii), with $n=4$, analogously to (\ref{n3}).

%By choosing $\frac{a}{\sqrt{\sigma}}$ instead of $a$ in the proof of (\ref{i)n=1}) (apply Corollary \ref{corollaryB1} ii), with $n=4$), one deduces
%\begin{align*}
% \|u(t)\|_{\mathcal{X}_{\frac{a}{\sigma},\sigma}^0}^{\frac{2\gamma}{2\gamma-1}}
%+\|u(t)\|_{\mathcal{X}_{\frac{a}{\sigma},\sigma}^0}^{2}\geq \frac{[C_{\gamma}(\alpha+1)]^{-1}}{T^{*}_{\frac{a}{\sqrt{\sigma}}}-t},\quad \forall t\in [0,T_{\frac{a}{\sqrt{\sigma}}}^*).
%\end{align*}
% Consequently, (\ref{wilber11}) implies that
%\begin{align}\label{n4} 
% \|u(t)\|_{\mathcal{X}_{\frac{a}{\sigma},\sigma}^0}^{\frac{2\gamma}{2\gamma-1}}
%+\|u(t)\|_{\mathcal{X}_{\frac{a}{\sigma},\sigma}^0}^{2}\geq \frac{[C_{\gamma}(\alpha+1)]^{-1}}{T^{*}_{a}-t},\quad \forall t\in [0,T_{a}^*).
%\end{align}
%Then,  Corollary  \ref{corollaryB1} ii), with $n=2$, is established.

On the other hand, by passing  the limit superior in  (\ref{n3}), as   $t\nearrow T^*_a$, one infers
$$\displaystyle \limsup_{t\nearrow T_a^*}\|u(t)\|_{\mathcal{X}_{\frac{a}{\sigma},\sigma}^0(\mathbb{R}^3)}=\infty,$$
where $a\geq0$, $\sigma\geq1$, $\alpha\geq0$ and $\gamma>\frac{1}{2}$. The result above proves that Corollary \ref{corollaryB1} iii), with $n=3$, holds and, furthermore, $T^*_a \geq T^*_{\frac{a}{\sigma}}$. By applying  (\ref{wilber10}), we conclude that
$T^*_a = T^*_{\frac{a}{\sigma}}$.

If we follow the process above,  we prove inductively that $T^*_a = T^*_{\frac{a}{(\sqrt{\sigma})^n}}$, for all $n\in \mathbb{N}$, and also that Corollary \ref{corollaryB1} i)--iii) hold. The product constants used in this induction are uniform in the radius and hence in $n$.

 Now, we are ready to show Corollary \ref{corollaryB1} iv). In fact,  by using Corollary \ref{corollaryB1} ii), we obtain
\begin{align}\label{wilber2}
\frac{[C_\gamma(\alpha+1)]^{-1}}{T^{*}-t}&\leq \left(\int_{\mathbb{R}^3}e^{\frac{a}{(\sqrt{\sigma})^{n-1}}|\xi|^{\frac{1}{\sigma}}}|\hat{u}(t)|\;d\xi\right)^{\frac{2\gamma}{2\gamma-1}}
+\left(\int_{\mathbb{R}^3}e^{\frac{a}{(\sqrt{\sigma})^{n-1}}|\xi|^{\frac{1}{\sigma}}}|\hat{u}(t)|\;d\xi\right)^{2},
\end{align}
where $a\geq0$, $\sigma\geq1$, $\alpha\geq0$, $\gamma>\frac{1}{2}$ and for all $t\in[0,T^*),n\in\mathbb{N}$. In addition,   we know that
$$e^{\frac{a}{(\sqrt{\sigma})^{n-1}}|\xi|^{\frac{1}{\sigma}}}\leq e^{a|\xi|^{\frac{1}{\sigma}}},\quad\forall n\in\mathbb{N},$$
since that $a\geq0$ and $\sigma\geq1$. Therefore, by taking into account the limit in (\ref{wilber2}), as $n\rightarrow \infty$,  it follows, from dominated convergence Theorem, that
\begin{align*}
\frac{[C_\gamma(\alpha+1)]^{-1}}{T^{*}-t}&\leq\|u(t)\|_{\mathcal{X}^0}^{\frac{2\gamma}{2\gamma-1}}+ \|u(t)\|_{\mathcal{X}^0}^{2},\quad\forall t\in[0,T^*),
\end{align*}
since  $\sigma>1$. Hence, we complete the proof of Corollary \ref{corollaryB1} iv).
%(Notice that the finiteness of the norm above is assured by Lemma \ref{lemanovo2}).

\caixa

\bigskip
\noindent\textbf{Proof of Corollary \ref{corollaryB2}:}
This result is  a consequence of Corollary \ref{corollaryB1} iv) and an adaptation of the arguments presented   by P. L. Guidolin, W. G. Melo and T. S. R. Santos \cite{patricia} for the Boussinesq equations. Thus, by using Corollary \ref{corollaryB1} iv) (with $\gamma =1$), we can write the following inequality:
\begin{align}\label{estimativaalphabeta}
H(t):=\frac{[C(\alpha+1)]^{-\frac{1}{2}}}{(T^*-t)^{\frac{1}{2}}}\leq \|u(t)\|_{\mathcal{X}^0}, \quad \forall t\in[0,T^*),
\end{align}
where $a\geq0$, $\sigma>1$ and $\alpha\geq0$. The interpolation constant required below can be chosen independently of $\delta>0$: splitting the Fourier integral at a radius $R$ gives
$$
\Vert f\Vert_{\mathcal X^0}
\le C_LR^{3/2}\Vert f\Vert_{L^2}+R^{-\delta}\Vert f\Vert_{\mathcal X^\delta}.
$$
Choosing
$R=(\Vert f\Vert_{\mathcal X^\delta}/\Vert f\Vert_{L^2})^{2/(2\delta+3)}$
yields
\begin{equation}\label{wilber3}
\Vert f\Vert_{\mathcal X^0}
\le C_0\Vert f\Vert_{L^2}^{\frac{2\delta}{2\delta+3}}
\Vert f\Vert_{\mathcal X^\delta}^{\frac{3}{2\delta+3}},
\end{equation}
where $C_0=C_L+1$ is independent of $\delta\geq0$ (for more details, see (\ref{lemanovo2})).

The $L^2$ estimate for the mild solution is justified by smooth divergence-free Fourier
truncations.  Their exact energy identities, weak compactness, and lower semicontinuity give
$$
\frac12\Vert u(t)\Vert_{L^2}^2+
\int_0^t\Vert(-\Delta)^{1/2}u(\tau)\Vert_{L^2}^2\,d\tau+
\alpha\int_0^t\Vert u(\tau)\Vert_{L^4}^4\,d\tau
\le\frac12\Vert u_0\Vert_{L^2}^2.
$$
In particular,
\begin{equation}\label{normal2}
\Vert u(t)\Vert_{L^2}\le\Vert u_0\Vert_{L^2},\quad \forall t\geq0.
\end{equation}
Taking $\delta=\frac{k}{\sigma}$ in (\ref{wilber3}), for $k\in \mathbb{N}$, and using
$\Vert u(t)\Vert_{\mathcal X^0}\ge H(t)$ (see (\ref{estimativaalphabeta})) gives the uniform lower bound
\begin{equation}\label{W1}
\Vert u(t)\Vert_{\mathcal X^{\frac{k}{\sigma}}}
\ge \frac{H(t)}{C_0}
\left(\frac{H(t)}{C_0\Vert u_0\Vert_{L^2}}\right)^{\frac{2k}{3\sigma}}.
\end{equation}
The term $k=0$ is supplied separately by (\ref{estimativaalphabeta}).
As a consequence, by adding the $k=0$ estimate and multiplying (\ref{W1}) by $\frac{a^k}{k!}$ for $k\ge1$, one has
\begin{align*}
\frac{H(t)}{C_0}
\sum_{k\ge0}\frac{a^k}{k!}
\left(\frac{H(t)}{C_0\Vert u_0\Vert_{L^2}}\right)^{\frac{2k}{3\sigma}}
\leq \sum_{k\ge0}\frac{a^k}{k!}\|u(t)\|_{\mathcal{X}^{\frac{k}{\sigma}}},
\end{align*}
or equivalently,
\begin{align*}
\frac{H(t)}{C_0}
\exp\left\{a\left(\frac{H(t)}{C_0\Vert u_0\Vert_{L^2}}\right)^{\frac{2}{3\sigma}}\right\}
\leq \|u(t)\|_{\mathcal{X}_{a,\sigma}^{0}},\quad\forall t\in [0,T^*).
\end{align*}
This proves Corollary \ref{corollaryB2}.
%On the other hand, define
%$$f(x)=\Big[e^x-\displaystyle\sum_{k=0}^{n_0-1}\frac{x^k}{k!}\Big][x^{-n_0}e^{-\frac{x}{2}}],\quad \forall x>0.$$
%It is easy to check that  there exists a positive constant $C_{s,\sigma}$ such that $f(x)\geq C_{s,\sigma}$, for all $x>0$. Thus, in particular, by taking %$x=a[C_{\alpha,T^*}[e^{C_{\alpha}(T^*-t)}-1]^{-\frac{2\alpha-1}{2\alpha}}\|(u_0,\theta_0)\|_{L^2}^{-1}]^{\frac{2}{3\sigma}}$, we deduce
%\begin{align*}
%\|(u,\theta)(t)\|_{\mathcal{X}_{a,\sigma}^{s}}&\geq %a^{n_0}C_{s,\sigma,\alpha,T^*}[e^{C_{\alpha}(T^*-t)}-1]^{-\frac{2\alpha-1}{2\alpha}(\frac{2s}{3}+\frac{2n_0}{\sigma}+1)}\|(u_0,\theta_0)\|_{L^2}^{-\frac{2}{3}(s+\frac{n_0}{\sigma})}\\
%&\quad\times \exp\Big\{aC_{\sigma,\alpha,T^*}
%[e^{C_{\alpha}(T^*-t)}-1]^{-\frac{2\alpha-1}{3\alpha\sigma}}\|(u_0,\theta_0)\|_{L^2}^{-\frac{2}{3\sigma}}\Big\},\quad\forall t\in[0,T^*).
%\end{align*}

\caixa

%\noindent\textbf{Proof of Corollary \ref{corollaryB1} vi):} By applying  Corollary \ref{corollaryB1} iii), we obtain
%\begin{align*}
%C_{a,\sigma,s,\alpha,\beta}[e^{C_{\alpha,\beta}(T^*-t)}-1]^{-1}&\leq %\left(\int_{\mathbb{R}^3}|\xi|^se^{\frac{a}{(\sqrt{\sigma})^{n}}|\xi|^{\frac{1}{\sigma}}}|(\hat{u},\hat{\theta})(t)|\;d\xi\right)^{\frac{2\alpha}{2\alpha-1}}
%+\left(\int_{\mathbb{R}^3}|\xi|^se^{\frac{a}{(\sqrt{\sigma})^{n}}|\xi|^{\frac{1}{\sigma}}}|(\hat{u},\hat{\theta})(t)|\;d\xi\right)^{\frac{2\beta}{2\beta-1}},
%\end{align*}
%for all $t\in[0,T^*)$ and $n\in\mathbb{N}$, provided that $a>0$, $\sigma> 1$, $s\in[-1,0],$ $\alpha>\frac{1}{2}$ and $\beta>\frac{1}{2}$. On the other hand, it is easy to check that
%$$|\xi|^se^{\frac{a}{(\sqrt{\sigma})^{n}}|\xi|^{\frac{1}{\sigma}}}|(\hat{u},\hat{\theta})(\xi,t)|\leq|\xi|^se^{a|\xi|^{\frac{1}{\sigma}}}|(\hat{u},\hat{\theta})(\xi,t)|\in %L^1(\mathbb{R}^3),\quad\forall \xi\in \mathbb{R}^3,t\in[0,T^*),n\in\mathbb{N},$$
%provided that $a\geq0$, $\sigma\geq1$ and $s\in \mathbb{R}$.
%Hence, by taking the limit, as $n\rightarrow \infty$,  we infer
%\begin{align*}
%C_{a,\sigma,s,\alpha,\beta}[e^{C_{\alpha,\beta}(T^*-t)}-1]^{-1}&\leq\|(u,\theta)(t)\|_{\mathcal{X}^s}^{\frac{2\alpha}{2\alpha-1}}+ %\|(u,\theta)(t)\|_{\mathcal{X}^s}^{\frac{2\beta}{2\beta-1}},\quad\forall t\in[0,T^*).
%\end{align*}
%\caixa

\bigskip
\noindent\textbf{Proof of Theorem \ref{thm:solucaoestabilidade}:}
%\section{Proof of Theorem \ref{thm:solucaoestabilidade}} \label{sec:demonstracao3}
Let us study the stability of the global solution obtained in Theorem \ref{thm:solucaolocal} i). First of all, notice that (\ref{eq:H1}) implies that
\begin{align*}
\Vert v_0 \Vert_{\cX^{0}_{a,\sigma}}&\leq\Vert u_0 \Vert_{\cX^{0}_{a,\sigma}}+\Vert u_0 -v_0\Vert_{\cX^{0}_{a,\sigma}}< \frac{C'}{2}+\frac{C'}{2}=C',
\end{align*}
since $\Vert u_0 \Vert_{\cX^{0}_{a,\sigma}}< \frac{C'}{2}$ by hypothesis, where $C'$ is given in  Theorem \ref{thm:solucaolocal} i) (see (\ref{dadoinicial})).
Thus, we can apply Theorem \ref{thm:solucaolocal} i)  (recall that, by assumption, $v_0 \in \cX^{0}_{a,\sigma}(\bR^3)$ is divergence free) to obtain a global solution
\begin{align}\label{w2}
	v \in C([0,\infty), \cX^{0}_{a,\sigma}(\bR^3)) \cap L^1([0,\infty), \cX^{1}_{a,\sigma}(\bR^3))
\end{align}
for the Navier-Stokes equations (\ref{NSv}). Thereby, we are able to prove the inequality (\ref{estabilidade}). To this end, denote $\bar u = u - v$ and $\bar p = p - q$ (where $q$ is the pressure associated with $v$) to obtain
\begin{align}\label{w3}
	\bar u\in C([0,\infty), \cX^{0}_{a,\sigma}(\bR^3)) \cap L^1([0,\infty), \cX^{1}_{a,\sigma}(\bR^3)),
\end{align}
by (\ref{w1}) and (\ref{w2}), and also
%$$
%	\begin{aligned}
%		\bar u_t &= 
%u_t - v_t\\
%		&= - u \cdot \nabla u + v \cdot \nabla v - \nabla p + \nabla q - (-\Delta)^{\frac{1}{2}} u + (-\Delta)^{\frac{1}{2}}v\\
%		&= - u \cdot \nabla u + v \cdot \nabla v \textcolor{red}{\:-\: v \cdot \nabla u + v \cdot \nabla u}  - \nabla p + \nabla q - (-\Delta)^{\frac{1}{2}} u + (-\Delta)^{\frac{1}{2}}v\\
%		&= - (u - v) \cdot \nabla u - v \cdot \nabla (u- v) - \nabla(p - q) - (-\Delta)^{\frac{1}{2}} (u - v)\\
%		&= -\bar u \cdot \nabla u - v \cdot \nabla \bar u - \nabla \bar p - (-\Delta)^{\frac{1}{2}} \bar u\\
%		&= -\bar u \cdot \nabla u - v \cdot \nabla \bar u \textcolor{red}{\:+\: u \cdot \nabla \bar u - u \cdot \nabla \bar u} - \nabla \bar p - (-\Delta)^{\frac{1}{2}} \bar u\\
%		&=- \bar u \cdot \nabla u - (v-u) \cdot \nabla \bar u - u \cdot \nabla \bar u - \nabla \bar p - (-\Delta)^\frac{1}{2} \bar u\\
%		&=- \bar u \cdot \nabla u + \bar u \cdot \nabla \bar u - u \cdot \nabla \bar u - \nabla \bar p - (-\Delta)^\frac{1}{2} \bar u.
%	\end{aligned}
%$$
%Thus, we may consider the following system of equations
\begin{equation} \label{eq:A11}
	\left\{  
	\begin{aligned}
		&\bar u_t + \bar u \cdot \nabla u - \bar u \cdot \nabla \bar u + u \cdot \nabla \bar u + \nabla \bar p + (-\Delta)^{\frac{1}{2}} \bar u = 0, \quad x\in \mathbb{R}^3, t>0;\\
		&\hbox{div}\, \bar u  = 0,  \quad x\in \mathbb{R}^3, t>0;\\
		&\bar u(x, 0) = u_0(x) - v_0(x) =: \bar u_0(x),  \quad x\in \mathbb{R}^3.
	\end{aligned}
	\right.
\end{equation}
Applying the following computation first to Fourier-truncated smooth approximants and then passing to the limit by Fatou's lemma and dominated convergence, one infers
%$$
%  \hat u \cdot \hat u_t + |\xi| \big( \hat u \cdot \hat u \big) + \big( \hat u \cdot \widehat{u \cdot \nabla u} \big) + (\hat u \cdot \widehat{\nabla p}) =0.
%$$
%But since $\hat u \cdot \widehat{\nabla p} = 0$, we may write the equation above as
%$$
%  \hat u \cdot \hat u_t + |\xi| |\hat u|^2 +  \hat u \cdot \widehat{u \cdot \nabla u} = 0.
%$$
%Using the fact that $\partial_t |\hat u(t)|^2 = 2\Re(\hat u \cdot \hat u_t)$, we have
%$$
%  \begin{aligned}
%    \frac{1}{2} \partial_t |\hat u|^2 &= \Re \big( \hat u \cdot \hat u_t \big)\\
%    &= - |\xi| |\hat u|^2 - \Re \big( \hat u \cdot \widehat{u \cdot \nabla u} \big) \leq - |\xi| |\hat u|^2 + \big| \hat u \cdot \widehat{u \cdot \nabla u} \big|.
%  \end{aligned}
%$$
%Finally, by the Cauchy-Schwartz innequality, we obtain
\begin{equation*}
  \frac{1}{2} \partial_t |\hat {\bar{u}}|^2 + |\xi| |\hat {\bar{u}}|^2 \leq |\hat {\bar{u}}| |\widehat{ \bar{u}\cdot \nabla u}|+|\hat {\bar{u}}| |\widehat{ \bar{u}\cdot \nabla \bar{u}}|+|\hat {\bar{u}}| |\widehat{ u\cdot \nabla \bar{u}}|.
\end{equation*}
Therefore, for any $\delta > 0$, it follows that
%$$
%  \partial_t \big( |\hat u(t)|^2 + \delta \big)^{1/2} = \frac{\partial_t |\hat u(t)|^2}{2 (|\hat u(t)| + \delta)^{1/2}}.
%$$
%Hence, we can divide both sides of (\ref{eq:A5}) by $(|\hat u|^2 + \delta)^{1/2}$ to obtain
\begin{align*}
  \partial_t \big[ |\hat {\bar{u}}|^2 + \delta \big]^{1/2} + \frac{|\xi| |\hat {\bar{u}}|^2}{[|\hat {\bar{u}}|^2 + \delta]^{1/2}} &\leq \frac{|\hat{ \bar{u}}| |\widehat{\bar{u} \cdot \nabla u}|}{[|\hat {\bar{u}}|^2 + \delta]^{1/2}}+\frac{|\hat{ \bar{u}}||\widehat{\bar{u} \cdot \nabla \bar{u}|}}{[|\hat {\bar{u}}|^2 + \delta]^{1/2}}+\frac{|\hat{ \bar{u}}| |\widehat{u \cdot \nabla \bar{u}|}}{[|\hat {\bar{u}}|^2 + \delta]^{1/2}}\\
  &\leq |\widehat{\bar{u} \cdot \nabla u}|+|\widehat{\bar{u} \cdot \nabla \bar{u}}|+ |\widehat{u \cdot \nabla \bar{u}}|.
\end{align*}
Then, by integrating  over $[0, t]$  and passing the limit, as $\delta \to 0$, one can write
%$$
%  \int_{T_1}^{T^*} \partial_s \big[ |\hat u(\xi,s)|^2 + \delta \big]^{1/2}\,ds +  \int_{T_1}^{T^*}\frac{|\xi| |\hat u(\xi,s)|^2}{(|\hat u(\xi,s)|^2 + \delta)^{1/2}}\,ds \leq  %\int_{T_1}^{T^*}\frac{|\hat u(\xi,s)| |\widehat{u \cdot \nabla u|}}{(|\hat u(\xi,s)|^2 + \delta)^{1/2}} \,ds.
%$$
%By the dominated convergence theorem and the fundamental theorem of calculus, we obtain, as $\delta \to 0$
$$
  |\hat {\bar{u}}(t)| +  \int_{0}^{t} |\xi| |\hat {\bar{u}}(\tau)| \,d\tau \leq |\hat{ \bar{u}}_0| +  \int_{0}^{t} [|\widehat{\bar{u} \cdot \nabla u}(\tau)| +|\widehat{\bar{u} \cdot \nabla \bar{u}}(\tau)|+|\widehat{u \cdot \nabla \bar{u}}(\tau)|] \,d\tau,\quad\forall t\geq0.
$$
Finally, by multiplying the inequality  above by $e^{a|\xi|^{\frac{1}{\sigma}}}$ and integrating the result obtained over $\bR^3$, we have
\begin{align*}
  \Vert \bar{u}(t) \Vert_{\cX^{0}_{a,\sigma}} +  \int_{0}^{t} \Vert \bar{u}(\tau) \Vert_{\cX^{1}_{a,\sigma}} \,d\tau &\leq \Vert \bar{u}_0 \Vert_{\cX^{0}_{a,\sigma}} +  \int_{0}^{t} \Vert (\bar{u} \cdot \nabla u) (\tau)\Vert_{\cX^{0}_{a,\sigma}} \,d\tau+\int_{0}^{t} \Vert (\bar{u} \cdot \nabla \bar{u}) (\tau)\Vert_{\cX^{0}_{a,\sigma}} \,d\tau\\
  &\quad+\int_{0}^{t} \Vert (u \cdot \nabla \bar{u}) (\tau)\Vert_{\cX^{0}_{a,\sigma}} \,d\tau\\
  &\leq \Vert \bar{u}_0 \Vert_{\cX^{0}_{a,\sigma}} +  \int_{0}^{t} \Vert (u \otimes \bar u) (\tau)\Vert_{\cX^{1}_{a,\sigma}} \,d\tau+  \int_{0}^{t} \Vert (\bar{u} \otimes \bar{u}) (\tau)\Vert_{\cX^{1}_{a,\sigma}} \,d\tau\\
  &\quad+  \int_{0}^{t} \Vert (\bar u \otimes u) (\tau)\Vert_{\cX^{1}_{a,\sigma}} \,d\tau,
\end{align*}
%Lastly, we nee to estimate the norm $\Vert u \cdot \nabla u(\tau) \Vert_{\cX^{0}_{a,\sigma}}$. For any $u \in \cX^{0}_{a,\sigma}(\bR^3)$ we have
%$$
%	\widehat{u \cdot \nabla u} = \cF \bigg[ \sum_{j=1}^3 u_j \partial_j u \bigg] = \sum_{j=1}^3 \cF \big[ u_j \partial_j u \big].
%$$
%But since $\nabla \cdot u = 0$, it follows that
%$$
%	\widehat{u \cdot \nabla u} = \sum_{j=1}^3 \cF \big[ \partial_j (u_j u) \big] = \sum_{j=1}^3 i \xi_j \widehat{u_j u} = i \xi \cdot \widehat{u \otimes u}.
%$$
%Hence,
%\begin{equation} \label{eq:A6}
%	|\widehat{u \cdot \nabla u}| = | \xi \cdot \widehat{u \otimes u}| \leq |\xi| |\widehat{u \otimes u}|.
%\end{equation}
%Then, by (\ref{eq:A6}) and 
for all $t\geq0.$ Thus, by applying  (\ref{lem:produto}), we conclude
\begin{align} \label{eq:A12}
		\nonumber	\Vert \bar u(t) \Vert_{\cX^{0}_{a,\sigma}} &+ \int_0^t \Vert \bar{u}(\tau) \Vert_{\cX^{1}_{a,\sigma}} \,d\tau \leq \Vert \bar u_0 \Vert_{\cX^{0}_{a,\sigma}}\\ 
		&+C \int_0^t[ \Vert \bar u(\tau) \Vert_{\cX^{0}_{a,\sigma}} \Vert u(\tau) \Vert_{\cX^{1}_{a,\sigma}} + \Vert u(\tau) \Vert_{\cX^{0}_{a,\sigma}}\Vert \bar u(\tau) \Vert_{\cX^{1}_{a,\sigma}} + \Vert \bar u(\tau) \Vert_{\cX^{0}_{a,\sigma}} \Vert \bar u(\tau) \Vert_{\cX^{1}_{a,\sigma}}] \,d\tau,
	\end{align}
%\revision{Here $K_{\rm st}$ is fixed once and for all.  Since a smaller small-data constant remains admissible in Theorem \ref{thm:solucaolocal} i), we take $K_{\rm st}C'\leq\frac14$ and set $\eta_*=\frac{1}{4K_{\rm st}}$.}
%for all $ t \geq0$. 
On the other hand, by   (\ref{eq:H1}), one can write the following inequality:
$$
	\Vert \bar u_0 \Vert_{\cX^{0}_{a,\sigma}} = \Vert u_0 - v_0 \Vert_{\cX^{0}_{a,\sigma}} < \frac{C'}{2}.
$$
Hence, by applying (\ref{w3}),  there exists $T_1 >0$ such that
\begin{align}\label{w4}
	\Vert \bar u(\tau) \Vert_{\cX^{0}_{a,\sigma}} < \frac{C'}{2}=\frac{1}{32C},\quad\forall \tau \in [0, T_1].
\end{align}
(See (\ref{dadoinicial})). Thereby, consider that
\begin{align}\label{supremo}
	\bar T = \sup \Big\{ t \in [0,\infty) : \sup_{\tau \in [0,t]} \{\Vert \bar u(\tau) \Vert_{\cX^{0}_{a,\sigma}}\} < \frac{1}{4C} \Big\}
\end{align}
 to conclude that $0<  T_1 \leq \bar T $ (see (\ref{w4})). Thus, by (\ref{dadoinicial}) and the fact that $\Vert u_0 \Vert_{\cX^{0}_{a,\sigma}}<\frac{C'}{2}$, (\ref{w28w}) implies that
\begin{align}\label{w6}
\Vert u (\tau) \Vert_{\cX^0_{a,\sigma}} \leq 2 \Vert u_0 \Vert_{\cX^{0}_{a,\sigma}}<C'=\frac{1}{16C},\quad\forall \tau\in [0, t],
\end{align}
where $t\in [0,\bar T)$. As a result, by applying  (\ref{supremo}) and (\ref{w6}) to  (\ref{eq:A12}), it follows that
	\begin{align}\label{w7}
	\nonumber	\Vert \bar u(t) \Vert_{\cX^{0}_{a,\sigma}} &+ \int_0^t \Vert \bar u(\tau) \Vert_{\cX^1_{a,\sigma}} \,d\tau \leq \Vert \bar u_0 \Vert_{\cX^{0}_{a,\sigma}}\\ 
		&+ C\int_0^t \Vert \bar u(\tau) \Vert_{\cX^{0}_{a,\sigma}} \Vert u(\tau) \Vert_{\cX^1_{a,\sigma}} \,d\tau+ \frac{1}{4} \int_0^t \Vert \bar u(\tau) \Vert_{\cX^1_{a,\sigma}} \,d\tau + \frac{1}{4} \int_0^t \Vert \bar u(\tau) \Vert_{\cX^1_{a,\sigma}} \,d\tau,
	\end{align}
for all $0 \leq t < \bar T$.
Consequently, one can write
$$
	\Vert \bar u(t) \Vert_{\cX^{0}_{a,\sigma}} + \frac{1}{2}\int_0^t \Vert \bar u(\tau) \Vert_{\cX^1_{a,\sigma}} \,d\tau \leq \Vert \bar u_0 \Vert_{\cX^{0}_{a,\sigma}} + C\int_0^t \Vert \bar u(\tau) \Vert_{\cX^{0}_{a,\sigma}} \Vert u(\tau) \Vert_{\cX^1_{a,\sigma}} \,d\tau,\quad\forall 0 \leq t < \bar T.
$$
%In order to use Grönwall's Lemma, we have to rewrite the equation above in the following manner 
%$$
%	\begin{aligned}
%		\Vert \bar u(\tau) \Vert_{\cX^{0}_{a,\sigma}} &+ \frac{1}{4}\int_0^t \Vert \bar u(\tau) \Vert_{\cX^1_{a,\sigma}} \,d\tau \leq \Vert u_0 \Vert_{\cX^{0}_{a,\sigma}} \int_0^t \left( %\Vert \bar u(\tau) \Vert_{\cX^{0}_{a,\sigma}} + \frac{1}{4}\int_0^t \Vert \bar u(\tau) \Vert_{\cX^1_{a,\sigma}} \,d\tau \right) \Vert u(\tau) \Vert_{\cX^1_{a,\sigma}} \,d\tau,
%	\end{aligned}
%$$
%for all $0 \leq t < \bar T$.
Hence, by using Grönwall's inequality, we obtain
\begin{equation} \label{eq:A13}
	\Vert \bar u(t) \Vert_{\cX^{0}_{a,\sigma}} + \frac{1}{2} \int_0^t \Vert \bar u(\tau) \Vert_{\cX^{1}_{a,\sigma}}\,d\tau \leq \Vert \bar u_0 \Vert_{\cX^{0}_{a,\sigma}} \exp \left( C \int_0^t \Vert u(\tau) \Vert_{\cX^1_{a,\sigma}} \,d\tau \right) ,\quad \forall 0 \leq t < \bar T.
\end{equation}
We choose $C'':= C$, where $C>0$ is given in (\ref{eq:A13}) (see (\ref{eq:H1})).

Now, we shall prove that $\bar T = \infty$. In fact, suppose, by contradiction, that $\bar T < \infty$.
Since  $\bar u \in C([0,\infty), \cX^{0}_{a,\sigma}(\bR^3))$ (see (\ref{w3})), it follows, in particular, that $\bar u \in C_{\bar T}(\cX^{0}_{a,\sigma}(\bR^3))$ and, by (\ref{eq:H1}), (\ref{eq:A13}) and (\ref{dadoinicial}), we have
$$
	\Vert \bar u (\bar T) \Vert_{\cX^{0}_{a,\sigma}} = \lim_{t \nearrow \bar T} \Vert \bar u(t) \Vert_{\cX^{0}_{a,\sigma}} < \frac{C'}{2}=\frac{1}{32C}.
$$
By applying once more that $\bar u \in C([0,\infty),\cX^{0}_{a,\sigma}(\bR^3))$, one concludes that there exists $ T_2>\bar T$ such that
\begin{equation} \label{eq:A14}
	\Vert \bar u(\tau) \Vert_{\cX^{0}_{a,\sigma}} < \frac{1}{32C},\quad \forall \tau \in [\bar T,  T_2].
\end{equation}
Moreover, (\ref{eq:H1}), (\ref{eq:A13}) and (\ref{dadoinicial}) also imply
\begin{equation} \label{eq:B4}
	\Vert \bar u(\tau) \Vert_{\cX^{0}_{a,\sigma}} <\frac{C'}{2}=\frac{1}{32C},\quad \forall \tau \in [0,\bar T).
\end{equation}
Consequently, these last inequalities (\ref{eq:A14}) and (\ref{eq:B4}) lead us to infer
$$\sup_{\tau \in [0,T_2]} \{\Vert \bar u(\tau) \Vert_{\cX^{0}_{a,\sigma}}\} \leq \frac{1}{32C}<\frac{1}{4C}.$$ 
This proves that $ T_2 \leq \bar T$, which is a contradiction. Therefore,
$\bar T = \infty$ and, by  (\ref{eq:A13}) once again, we also have
%$$
%	\Vert \bar u(t) \Vert_{\cX^{0}_{a,\sigma}} + \frac{1}{2} \int_0^t \Vert \bar u(\tau) \Vert_{\cX^{1}_{a,\sigma}} \,d\tau < \frac{C'}{2},\quad \forall t \geq0.
%$$
%From the inequality above, one deduces that
%$$
%	\int_0^{T_*} \Vert \bar u(\tau) \Vert_{\cX^{1}_{a,\sigma}} \,d\tau \leq \frac{1}{2C} < \infty.
%$$
%Hence, we can write 
%\begin{align}\label{w9}
%	\int_{0}^{T_*} \Vert v(\tau) \Vert_{\cX^{1}_{a,\sigma}} \,d\tau \leq \int_{0}^{\infty} \Vert u(\tau) \Vert_{\cX^{1}_{a,\sigma}} \,d\tau + \int_{0}^{T_*} \Vert \bar u(\tau) %\Vert_{\cX^{1}_{a,\sigma}} \,d\tau < \infty.
%\end{align}
%Thereby, Theorem \ref{thm:solucaoglobal} i) (see (\ref{w8})) implies that $T_* = \infty$ and (\ref{eq:A13}) becomes
$$
	\Vert \bar u(t) \Vert_{\cX^{0}_{a,\sigma}} + \frac{1}{2} \int_0^t \Vert \bar u(\tau) \Vert_{\cX^{1}_{a,\sigma}}\,d\tau \leq \Vert \bar u_0 \Vert_{\cX^{0}_{a,\sigma}} \exp \left( C'' \int_0^t \Vert u(\tau) \Vert_{\cX^1_{a,\sigma}} \,d\tau \right),\quad \forall t\geq0.
$$ 
This establishes the proof of Theorem \ref{thm:solucaoestabilidade}.
%$$
%	\Vert u(t) - v(t) \Vert_{\cX^{0}_{a,\sigma}} + \frac{1}{2} \int_0^t \Vert u(\tau) - v(\tau) \Vert_{\cX^{1}_{a,\sigma}} \,d\tau \leq \Vert u_0 - v_0 \Vert_{\cX^{0}_{a,\sigma}} \exp \left( %C''\int_0^\infty \Vert u(\tau) \Vert_{\cX^{1}_{a,\sigma}}\,d\tau \right), \quad \forall t\geq0.
%$$

\caixa

\noindent \textbf{Declarations:}\\

\noindent \textbf{Ethics approval and consent to participate:}
%\subsection*{Ethics approval and consent to participate} 
Not applicable.\\

\noindent \textbf{Consent for publication:}
%\subsection*{Consent for publication} 
Not applicable.\\

\noindent \textbf{Availability of data and materials:}
%\subsection*{Availability of data and materials} 
Not applicable (this manuscript does not report data generation or analysis).\\

\noindent \textbf{Conflicts of interest/Competing interests:}
%\subsection*{Conflicts of interest/Competing interests} 
The authors have no conflicts of interest to declare that are relevant to the content of this article.\\

\noindent \textbf{Funding:}
%\subsection*{Funding} 
T.S.R. Santos is partially supported by São Paulo Research Foundation (FAPESP) grant 2024/15587-1.\\

\noindent \textbf{Authors' contributions:}
%\subsection*{Authors' contributions} 
W.M.,  B.D. and T.S.  wrote and reviewed the manuscript.\\

\noindent \textbf{Acknowledge:}
%\subsection*{Funding} 
B.S.D. was partially supported by a travel grant for a international traineeship by the University of Helsinki.\\

\end{document}